\numberwithin{equation}{section}
\newtheorem{thm}{Theorem}[section]
\newtheorem{lem}[thm]{Lemma}
\newtheorem{prop}[thm]{Proposition}
\newtheorem{cor}[thm]{Corollary}
\theoremstyle{definition}
\newtheorem{defn}[thm]{Definition}
\newtheorem*{aknow}{Acknowledgments}
\theoremstyle{remark}
\newtheorem{rem}[thm]{Remark}
\newcommand{\A}{\mathcal{A}}
\newcommand{\B}{\mathcal{B}}
\newcommand{\G}{\mathcal{G}}
\newcommand{\R}{\mathbb{R}}
\newcommand{\canbangg}[1]{ \begin{array}{l} #1 \end{array} }
\begin{document}

\title[Lie Groups Whose Coadjoint Orbits Have Only Zero or Maximal Dimension]{Classification of Real Solvable Lie Algebras Whose Simply Connected Lie Groups Have Only Zero or Maximal Dimensional Coadjoint Orbits}

\author[Anh Vu Le et al.]{Anh Vu Le \and Van Hieu Ha \and Anh Tuan Nguyen \and \\ Tran Tu Hai Cao \and Thi Mong Tuyen Nguyen}

\address{Anh Vu Le, Department of Economic Mathematics, University of Economics and Law, Vietnam National University - Ho Chi Minh City, Viet Nam.}
\email{vula@uel.edu.vn}

\address{Van Hieu Ha, Department of Economic Mathematics, University of Economics and Law, Vietnam National University - Ho Chi Minh City, Viet Nam.}
\email{hieuhv@uel.edu.vn}

\address{Anh Tuan Nguyen, Faculty of Political Science and Pedagogy, University of Physical Education and Sports, Ho Chi Minh City, Viet Nam.}
\email{natuan@upes.edu.vn}

\address{Tran Tu Hai Cao, Le Quy Don High School for the Gifted, Ninh Thuan Province, Viet Nam.}
\email{tuhai.thptlequydon@ninhthuan.edu.vn}

\address{Thi Mong Tuyen Nguyen, Faculty of Mathematics and Information, Dong Thap University, Cao Lanh city, Dong Thap Province, Viet Nam.}
\email{ntmtuyen@dthu.edu.vn}

\keywords{$K$-orbit, $MD$-algebra, $MD(*,1)$-algebra, $MD(*,*-1)$-algebra.}

\subjclass[2000]{Primary 17B, 22E60, Secondary 20G05.}

\date{\today}

\begin{abstract}
    In this paper we study a special subclass of real solvable Lie algebras having small dimensional or small codimensional derived ideal. It is well-known that the derived ideal of any Heisenberg Lie algebra is 1-dimensional and the derived ideal of the 4-dimensional real Diamond algebra is 1-codimensional. Moreover, all the coadjoint orbits of any Heisenberg Lie group as well as 4-dimensional real Diamond group are orbits of dimension zero or maximal dimension. In general, a (finite dimensional) real solvable Lie group is called an $MD$-group if its coadjoint orbits are zero-dimensional or maximal dimensional. The Lie algebra of an $MD$-group is called an $MD$-algebra and the class of all $MD$-algebras is called $MD$-class. Simulating the mentioned above characteristic of Heisenberg Lie algebras and 4-dimensional real Diamond algebra, we give a complete classification of $MD$-algebras having 1-dimensional or 1-codimensional derived ideals.
\end{abstract}

\maketitle

\section{Classification of solvable Lie algebras: a quick introduction}

Classifying all Lie algebras of dimension less than 4 is an elementary exercise. However, when considering Lie algebras of dimension $n \, (n \geqslant 4)$, complete classifications are much harder. As it has long been well known, there exist three different types of Lie algebras: the semisimple, the solvable and those which are neither semi-simple nor solvable. By the Levi-Maltsev Theorem \cite{Mal45} in 1945, any finite-dimensional Lie algebra over a field of characteristic zero can be expressed as a semidirect sum of a semi-simple subalgebra and its maximal solvable ideal. It reduces the task of classifying all finite-dimensional Lie algebras to obtaining the classification of semi-simple and of solvable Lie algebras.
 
The problem of the classification of semi-simple Lie algebras over the complex field has been completely classified by Killing, E. Cartan \cite{Car94} in 1894, over the real field by F. R. Gantmakher \cite{Gan39} in 1939.

Although several classifications of solvable Lie algebras of small dimension are known, but the problem of the complete classification of the (real or complex) solvable Lie algebras is still open up to now. There are two ways of proceeding in the classification of solvable Lie algebras: {\bf by dimension} or {\bf by structure}.

First, we list some results about the classification of solvable Lie algebras in the dimensional approach.

\begin{itemize}
	\item All solvable Lie algebras up to dimension 6 over the complex field $\mathbb{C}$ and the real field $\mathbb{R}$ were classified by G. M. Mubarakzyanov \cite{Mub63} in 1963 and by P. Turkowski \cite{Tur90} in 1990.  
	\item All solvable Lie algebras up to dimension 4 over any perfect field were classified by J. Patera and H. Zassenhaus \cite{PZ90} in 1990. 
	\item Some incomplete classifications of solvable Lie algebras in dimension 7 and nilpotent algebras up to dimension 8 were given by G. Tsagas \cite{Tsa99} in 1999. 
\end{itemize}

It seems to be very difficult to proceed by dimension in the classification of Lie algebras of dimension greater than 6. However, it is possible to proceed by structure, i.e. to classify solvable Lie algebras with a specific given property. Now, we list some results about the classification of solvable Lie algebras in the structural approach.

\begin{itemize}
	\item In 1973, M. A. Gauger \cite{Gau73} gave a complete classification of metabelian Lie algebras of dimension no more than 7 and nearly complete results for dimension 8.
	\item In 1995, D. Arnal, M. Cahen and J. Ludwig \cite{ACL95} gave the list of all solvable Lie algebras such that the coadjoint orbits of the connected Lie groups corresponding to them are of dimension zero or two. But they have not classified them yet, up to isomorphism.
	\item In 1999, L. Yu. Galitski and D. A. Timashev \cite{GT99} completely classified all of metabelian Lie algebras of dimension 9.
	\item In 2007, R. Campoamor-Stursberg \cite{CS07} gave a complete classification of nine-dimensional Lie algebras with nontrivial Levi decomposition.
	\item In 2007, I. Kath \cite{Kat07} classified the class of nilpotent quadratic Lie algebras of dimension no more than 10.
	\item In 2010, another class of Lie algebras relating to the nilradicals has been being classified by L. \u Snobl \cite{Sno10}
	\item In 2012, M. T. Duong, G. Pinczon, and R. Ushirobira \cite{DPU12} gave a classification of Solvable singular quadratic Lie algebras.  
	\item In 2012, L. Chen \cite{Che12} classified a class of solvable Lie algebras with triangular decompositions.
\end{itemize}
 
In an attempt to classify solvable Lie algebras by structure, we study in this paper a special subclass of real solvable Lie algebras having small dimensional or small codimensional derived ideals. This idea comes from an investigation of Kirillov's Orbit Method on the $(2m+1)$-dimensional Heisenberg Lie algebras ($0 < m \in \mathbb{N}$) and the 4-dimensional real Diamond Lie algebra. Recall that, in 1962, A. A. Kirillov \cite{Kir76} introduced the Orbit Method which quickly became the most important method in the theory of representations of Lie groups and Lie algebras. The key of Kirillov's Orbit Method is the coadjoint orbits or $K$-orbits (i.e., orbits in the coadjoint representation) of Lie groups. We emphasize that any $K$-orbit of the $(2m+1)$-dimensional Heisenberg Lie group and the 4-dimensional real Diamond Lie group has dimension zero or maximal. Hence, it is reasonable to consider the class of solvable Lie groups (and corresponding algebras) having the similar property. A (finite dimensional) real solvable Lie group is called an $MD$-group (in term of N. D. Do \cite{Do99}) if its $K$-orbits are orbits of dimension zero or maximal dimension. The Lie algebra of an $MD$-group is called an $MD$-algebra and the class of all $MD$-algebras is called $MD$-class. In particular, if the maximal dimension of the $K$-orbits of some $MD$-group $G$ is equal to $\dim G$ then $G$ is called an $SMD$-group and its algebra is called an $SMD$-algebra. The class of all $SMD$-algebras is called $SMD$-class. It is clear that $SMD$-class is a subclass of $MD$-class.

The investigation of $MD$-class was first time suggested by N. D. Do \cite{Do99} in 1982. Now, we list main results about $MD$-class.

\begin{itemize}
	\item In 1984, H. V. Ho \cite{VH84} completely classified all of $SMD$-algebras (of arbitrary dimension).
	\item In 1990, A. V. Le \cites{Le90-1, Le90-2, Le93} gave a complete classification of  all 4-dimensional $MD$-algebras.
	\item In 1995, D. Arnal, M. Cahen and J. Ludwig \cite{ACL95} gave the list of all $MD$-algebras such that the maximal dimension of $K$-orbits of corresponding $MD$-groups is just 2, but they have not yet classified them up to isomorphism.
	\item Up to 2012, A. V. Le et al. \cites{LS08, LHT11} had classified (up to isomorphism) all of $MD$-algebras of dimension 5. 
	\item In 2013, the $MD$-class was listed as a specific attention in classification of Lie Algebras by L. Boza, E. M. Fedriani, J. Nunez and A. F. Tenorio \cite{BFNT13}.
\end{itemize}

The investigation of general properties of $MD$-class, in particular, the complete classification of $MD$-class is still open up to now. 

As we say above, the $(2m+1)$-dimensional real Heisenberg Lie algebra and the 4-dimensional real Diamond Lie algebra are $MD$-algebras. The real Lie Heisenberg algebras and their extensions are investigated by a lot of mathematicians because of their physical origin and applications. Moreover, the first derived ideal of the Heisenberg Lie algebra is 1-dimensional and the first derived ideal of the 4-dimensional Diamond Lie algebra is 1-codimensional. We will generalize these properties to consider $MD$-algebras having the first derived ideal of dimension 1 or codimension 1. For convenience, we shall denote by $MD(*,1)$ or $MD(*,*-1)$ the subclasses of $MD$-algebras having 1-dimensional or 1-codimensional derived ideals, respectively. If $\G$ belongs to $MD(*,1)$ or $MD(*,*-1)$ then it is called an $MD(*,1)$-algebra or $MD(*,*-1)$-algebra, respectively. In particular, every $MD(*,1)$-algebra or $MD(*,*-1)$-algebra of dimension $n$ is called an $MD(n,1)$-algebra or $MD(n,n-1)$-algebra, respectively. Of course, the $(2m+1)$-dimensional Heisenberg Lie algebra belongs to $MD(2m+1,1)$ and the 4-dimensional real Diamond Lie algebra belongs to $MD(4,3)$. The main purpose of this paper is to completely classify, up to isomorphism, $MD(*,1)$-class and $MD(*,*-1)$-class. We also prove that any real solvable Lie algebra having 1-dimensional derived ideal belongs to $MD(*,1)$ and give a sufficient and necessary condition in order that a $n$-dimensional real solvable Lie algebra having 1-codimensional derived ideal belongs to $MD(n,n-1)$ with $n > 4$.

    The next part of the paper will be organized as follows: Section 2 gives some basic concepts, especially we recall the Lie algebra of the group of the affine transformations of real straight line, the real Heisenberg Lie algebras and the real Diamond Lie algebras. Section 3 deals with some well-known remarkable classifications of some subclasses of $MD$-algebras. The main results about the complete classifications of $MD(*, 1)$-class and $MD(*, * - 1)$-class, are given in Section 4. The last section is devoted the discussion of some open problems.

\section{Some basic concepts}
     
     We first recall in this section some preliminary results and notations which will be used later. For details we refer the reader to the book \cite{Kir76} of A. A. Kirillov and the book \cite{Do99} of N. D. Do.
      
\subsection{The coadjoint representation and coadjoint orbits}
      Let $G$ be a Lie group, $\G$ = Lie($G$)  be the corresponding Lie algebra of $G$ and $\G^*$ be the dual space of $\G$. For every $g \in G$, we denote the internal automorphism associated with \textit{g} by $A_{(g)}$, and hence, $A_{(g)}:G \to G$ can be defined as follows 
      $A_{(g)}:=g.x.g^{-1}, \forall x \in G.$
      
      This automorphism induces the following map $A_{\left( g \right)^*} : \G \to \G$  which is defined as follows $$ A_{\left( g \right) ^ *  }\left( X \right): = \frac{d}
      {{dt}}\left[ {g.\exp \left( {tX} \right).g^{ - 1} } \right]\left| {_{t = 0} } \right.;\, \forall X 
      \in {\mathcal{G}}.$$
This map is called \emph{tangent map} of $A_{(g)}$. 

\begin{defn} 
	The action $$\begin{array}{l} Ad:G \to Aut\left(\G \right)\\
	\,\,\,\,\,\,\,\,\,\,\,\, \, g \mapsto {A_{{\left( g \right)}^*}} \end{array}$$
	is called \emph{the adjoint representation of $G$ in $\G$}.
\end{defn}

The coadjoint representation is the dual of the adjoint representation. Namely, we have the following definition.

\begin{defn} 
	\emph{The coadjoint representation} or \emph{$K$-representation} $$\begin{array}{l}
	K:G \to Aut\left(\G^* \right)\\
	\,\,\,\,\,\,\,\,\,\,\,\, \, g \mapsto K_{\left(g \right)}
	\end{array}$$
	of $G$ in $\G^*$ is defined by
	$$\left\langle K_{\left(g \right)} F, X \right\rangle : = \left\langle F, Ad\left(g^{-1}
	\right)X \right\rangle ;\left( F \in \G^*, X \in \G \right),$$
	where $\langle F, Y \rangle $ denotes the value of a linear functional $F$ on an arbitrary vector $Y \in \G$.
\end{defn}

A geometrical interpretation of the coadjoint representation of $G$ is as the action by left-translation on the space of right-invariant 1-form on $G$.
 
\begin{defn}
	Each orbit of the coadjoint representation of $G$ is called a \emph{$K$-orbit of $G$}.
\end{defn}

We denote the $K$-orbit containing $F$ by $\Omega_F$. For every $F \in \G^*$, the $K$-orbit containing $F$ can be defined by $\Omega_F := \left\{ {K_{\left( g \right)}F\, \vert\, g \in G}\right\}$. The dimension of every $K$-orbit of an arbitrary Lie group $G$ is always even. In order to define the dimension of the $K$-orbits $\Omega_F$ for each $F$ from the dual space $\G^*$ of the Lie algebra $\G = {\rm Lie} (G)$, it is useful to consider the following (skew-symmetric bilinear) Kirillov form $B_F$ on $\G$ corresponding to $F$: $B_F \left( {X,Y} \right)= \left\langle F,\left[ {X,Y} \right] \right\rangle$ for all $X,Y \in \G$. Denote the stabilizer of $F$ under the co-adjoint representation of $G$ in $\G^*$  by $G_F$ and $\G_F := {\rm Lie} (G_F)$.

We shall need in the sequel of the following result.

\begin{prop}[{\bf see \cite[Section 15.1]{Kir76}}]\label{prop2.4}
	$$\ker B_F  = \G_F and \dim \Omega_F  = \dim \G - \dim \G_F = {\rm rank} B_F.$$
\end{prop}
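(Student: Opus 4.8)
The plan is to identify the tangent space of the orbit $\Omega_F$ at the base point $F$ with the image of $\G$ under the infinitesimal coadjoint action, and then to match the kernel of that action with $\G_F$ on one side and with $\ker B_F$ on the other.

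First I would differentiate the coadjoint action. For a fixed $X \in \G$, the curve $t \mapsto K_{\exp(tX)}F$ in $\G^*$ has a derivative at $t=0$, a tangent vector at $F$ which I will call $\operatorname{ad}^*_X F$. Differentiating the defining identity $\langle K_{(g)}F, Y\rangle = \langle F, \operatorname{Ad}(g^{-1})Y\rangle$ along $g = \exp(tX)$ gives $\langle \operatorname{ad}^*_X F, Y\rangle = -\langle F, [X,Y]\rangle = -B_F(X,Y)$ for all $Y \in \G$. Since the orbit map $G \to \Omega_F$, $g \mapsto K_{(g)}F$, is a submersion onto the (immersed) orbit, its differential at the identity surjects onto $T_F\Omega_F$, so $T_F\Omega_F = \{\operatorname{ad}^*_X F : X \in \G\}$.

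Next I would invoke the orbit--stabilizer correspondence: $G_F$ is a closed Lie subgroup of $G$, the orbit map descends to a bijective immersion $G/G_F \to \Omega_F$, and hence $\dim \Omega_F = \dim G - \dim G_F = \dim \G - \dim \G_F$. It then remains to compute $\G_F$. A vector $X$ lies in $\G_F = \operatorname{Lie}(G_F)$ exactly when $\exp(tX) \in G_F$ for all $t$, i.e. when $K_{\exp(tX)}F = F$ identically; differentiating, this is equivalent to $\operatorname{ad}^*_X F = 0$, i.e. to $B_F(X,Y) = \langle F, [X,Y]\rangle = 0$ for every $Y \in \G$, which says precisely $X \in \ker B_F$. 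Thus $\G_F = \ker B_F$. Finally, rank--nullity gives $\operatorname{rank} B_F = \dim \G - \dim \ker B_F = \dim \G - \dim \G_F = \dim \Omega_F$, establishing all three equalities.

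The point needing the most care is the first step --- the identification $T_F\Omega_F = \{\operatorname{ad}^*_X F : X \in \G\}$ together with the reading off of $\operatorname{Lie}(G_F)$ --- since it rests on the fact that an orbit of a smooth Lie group action is an immersed submanifold and that the orbit map is a submersion onto it; one should also keep track of the sign when differentiating $\operatorname{Ad}(g^{-1})$, although that sign is irrelevant for the kernel. The remaining ingredients are just the standard orbit--stabilizer theorem for Lie group actions and elementary linear algebra.
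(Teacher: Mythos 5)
Your argument is correct: it is the standard derivation of the orbit--stabilizer identity for the coadjoint action, identifying $T_F\Omega_F$ with the image of $X\mapsto \mathrm{ad}^*_XF$ and $\G_F$ with the kernel of that map, which equals $\ker B_F$. The paper itself offers no proof of this proposition --- it is imported verbatim from Kirillov's book (Section 15.1) --- and your write-up matches the argument given there; the only point you gloss over is the converse direction of ``$\mathrm{ad}^*_XF=0 \Rightarrow \exp(tX)\in G_F$ for all $t$,'' which follows either from uniqueness of integral curves of the vector field $F'\mapsto \mathrm{ad}^*_XF'$ or, more cheaply, from the dimension count you already have.
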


\subsection{$MD$-groups and $MD$-algebras and some their properties}

\begin{defn} 
	An \emph{$n$-dimensional $MD$-group} or, for brevity, an \emph{$MDn$-group} is an $n$-dimensional real solvable Lie group such that its $K$-orbits are orbits of dimension zero or maximal dimension. The Lie algebra of an $MDn$-group is called an \emph{$MDn$-algebra}. \emph{$MD$-class} and \emph{$MDn$-class} are the sets of all $MD$-algebras (of arbitrary dimension) and $MDn$-algebras, respectively.
\end{defn}

\begin{defn}
	An \emph{$MD(n, m)$-algebra} is an $MDn$-algebra whose the first derived ideal is $m$-dimensional with $m,n \in \mathbb{N}$ and $0 < m < n$. \emph{$MD(n, m)$-class} is the set of all $MD(n, m)$-algebras. In particular, \emph{$MD(*, 1)$-class} and \emph{$MD(*, *-1)$-class} are the sets of all $MD$-algebras (of arbitrary dimension) having the first derived ideal of dimension 1 and codimension 1, respectively.
\end{defn}

\begin{rem}
	Note that all the Lie algebras of dimension $n \, (n \leqslant 3)$ are $MD$-algebras, and moreover they can be listed easily. So we only take interest in $MDn$-algebras for $n \geqslant 4$.
\end{rem}

For any real Lie algebra $\G$, as usual, we denote the first and second derived ideals of $\G$ by $\G^1: = [\G, \G]$ and $\G^2: = [\G^1, \G^1]$, respectively. Now, we introduce some well-known properties of $MD$-algebras.

First, the following proposition gives a necessary condition for a Lie algebra belonging to $MD$-class.

\begin{prop}[{\bf see \cite[Theorem 4]{VH84}}]\label{prop2.8}
	Let $\G$  be an $MD$-algebra. Then its second derived ideal $\G^2$ is commutative.
\end{prop}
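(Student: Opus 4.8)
The plan is to argue by contradiction: suppose $\G$ is an $MD$-algebra but $\G^2 = [\G^1,\G^1]$ is not commutative, i.e. $\G^3 := [\G^2,\G^2] \neq \{0\}$. The strategy is to exhibit a functional $F \in \G^*$ whose $K$-orbit $\Omega_F$ has positive dimension strictly less than the maximal one, contradicting the defining property of an $MD$-group. By Proposition \ref{prop2.4}, the dimension of $\Omega_F$ equals the rank of the Kirillov form $B_F(X,Y) = \langle F,[X,Y]\rangle$, so everything reduces to producing two functionals whose Kirillov forms have different nonzero ranks.

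First I would use solvability: since $\G$ is solvable, so is the nonzero ideal $\G^2$, hence $\G^2$ is not equal to its own derived ideal and $\G^3 \subsetneq \G^2$. Pick a functional $F_0$ supported on $\G^2$ in such a way that $B_{F_0}$ is nonzero but of small rank — concretely, choose $0 \neq X_0 \in \G^3$ (so $X_0 = [Y,Z]$ for some $Y,Z \in \G^2$) and take $F_0 \in \G^*$ with $\langle F_0, X_0\rangle \neq 0$ while $F_0$ annihilates a large complement; one can arrange $B_{F_0}$ to have rank $2$ by localizing the pairing on the plane spanned by $Y,Z$ and killing the rest. This shows the maximal $K$-orbit dimension is exactly $2$, because by the $MD$-hypothesis every nonzero orbit must already be maximal, so $\dim\Omega_{F_0} = 2$ forces the maximal dimension to be $2$. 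The remaining task is to contradict this: find $F_1 \in \G^*$ with $\operatorname{rank} B_{F_1} \geqslant 4$. Here I would exploit that $\G^1 \supseteq \G^2 \supseteq \G^3$ with all inclusions strict enough to furnish four "independent" bracket relations — for instance, using that $\G^1$ acts nontrivially enough on itself (again via the derived series not stabilizing) to produce elements $A_1,B_1,A_2,B_2$ with $[A_i,B_i]$ pairing independently under a suitable $F_1$, yielding a rank-$4$ block. Combining the two computations gives a nonzero orbit of dimension $2$ and another of dimension $\geqslant 4$, contradicting the $MD$-property.

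The main obstacle, and the place where the argument has to be done carefully rather than waved through, is the construction of $F_1$ with $\operatorname{rank} B_{F_1} \geqslant 4$: one must be sure that when $\G^2$ is nonabelian there genuinely is "enough room" in the chain $\G \supseteq \G^1 \supseteq \G^2 \supseteq \G^3$ to locate two independent symplectic pairs, and that the chosen $F_1$ does not accidentally degenerate the form. A clean way to handle this is to work with a basis adapted to the derived series and to track the structure constants, showing that the nonvanishing of $\G^3$ forces a $4\times 4$ nondegenerate minor in some $B_{F_1}$; alternatively one can invoke that a nonabelian solvable Lie algebra (here $\G^2$ together with the piece of $\G^1$ acting on it) always admits a coadjoint orbit of dimension at least $4$ once it properly contains its own derived ideal, which is essentially a statement about the Heisenberg-type or $\mathfrak{aff}(\mathbb{R})$-type subquotients sitting inside. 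I expect the bookkeeping here — not any deep idea — to be the real content of the proof; the contradiction with Proposition \ref{prop2.4} and the $MD$-definition is then immediate.
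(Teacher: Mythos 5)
First, a point of comparison: the paper does not prove this proposition at all --- it is imported verbatim from \cite[Theorem 4]{VH84} --- so there is no in-paper argument to measure yours against. Judged on its own terms, your proposal sets up the right framework (use Proposition \ref{prop2.4} to reduce everything to ranks of Kirillov forms, then produce two functionals whose forms have distinct nonzero ranks), but both constructions that would make the contradiction actual are missing, and the two auxiliary facts you lean on to bridge the gap are not true.

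Concretely: (1) you cannot ``arrange $B_{F_0}$ to have rank $2$ by localizing the pairing on the plane spanned by $Y,Z$ and killing the rest.'' Choosing $F_0$ to annihilate a complement of $\R X_0$ only controls which bracket \emph{values} $F_0$ sees; the rank of $B_{F_0}$ counts the independent directions $X\in\G$ with $\langle F_0,[X,\cdot]\rangle\not\equiv 0$, and many pairs of vectors outside the plane of $Y$ and $Z$ may bracket into $\R X_0$. (Already in $\mathfrak{h}_5$ the functional dual to the central direction has Kirillov form of rank $4$.) So you only get $\operatorname{rank}B_{F_0}\geqslant 2$, and the inference ``hence the maximal orbit dimension is $2$'' is unsupported. (2) The general principle invoked for $F_1$ --- that a nonabelian solvable Lie algebra properly containing its derived ideal admits a coadjoint orbit of dimension at least $4$ --- is false: ${\rm aff}(\R)$, $\mathfrak{h}_3$ and the $4$-dimensional Diamond algebra are all nonabelian and solvable with $\G^1\subsetneq\G$, yet all of their coadjoint orbits have dimension $0$ or $2$. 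What actually has to be shown is that the single hypothesis $[\G^2,\G^2]\neq 0$ forces two nonzero orbits of \emph{different} dimensions; that is exactly the ``bookkeeping'' you defer, so as written the proposal reduces the theorem to itself. Any repair must genuinely exploit the full chain $\G\supsetneq\G^1\supsetneq\G^2\supsetneq\G^3\neq 0$ (for instance through a basis adapted to the derived series and explicit structure-constant computations in the spirit of the proof of Theorem \ref{thm4.3}(ii)), not a generic statement about nonabelian solvable algebras.
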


We point out here that the converse of the above result is in general not true. In other words, the above necessary condition is not a sufficient condition.

\begin{prop}[{\bf see \cite[Chapter 2, Proposition 2.1]{Do99}}]
	 Let $\G$ be an $MD$-algebra. If $F \in \G^*$ is not vanishing perfectly in $\G^1$, i.e. there exists $U \in \G^1$ such that $\langle F, U \rangle \neq 0$, then the $K$-orbit $\Omega_F$ has maximal dimension.
\end{prop}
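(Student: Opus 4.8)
The plan is to reduce the claim to the nonvanishing of the Kirillov form $B_F$ and then to invoke the dichotomy built into the definition of an $MD$-algebra.

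First I would use Proposition~\ref{prop2.4}, which gives $\dim\Omega_F = {\rm rank}\, B_F$, where $B_F(X,Y) = \langle F, [X,Y]\rangle$ for $X,Y \in \G$. Thus it suffices to show that $B_F$ is not the zero form. Arguing by contradiction, suppose $B_F \equiv 0$, so that $\langle F, [X,Y]\rangle = 0$ for all $X,Y \in \G$. Since the first derived ideal $\G^1 = [\G,\G]$ is, by definition, the linear span of the brackets $[X,Y]$ with $X,Y \in \G$, the linearity of $F$ then forces $\langle F, U\rangle = 0$ for every $U \in \G^1$. This contradicts the hypothesis that $F$ does not vanish perfectly on $\G^1$, i.e. that there exists $U \in \G^1$ with $\langle F, U\rangle \neq 0$. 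Hence $B_F \not\equiv 0$, and since $B_F$ is skew-symmetric its rank is a positive even integer; in particular $\dim\Omega_F = {\rm rank}\, B_F \geqslant 2 > 0$.

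Finally I would appeal to the definition of an $MD$-algebra: every $K$-orbit of the associated $MD$-group has dimension either zero or the maximal value attained among all coadjoint orbits. As $\dim\Omega_F > 0$, the orbit $\Omega_F$ is not zero-dimensional, so it must be of maximal dimension, which is precisely the assertion.

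I do not foresee any genuine obstacle here: the argument is just Proposition~\ref{prop2.4} combined with the $MD$ dichotomy, and the only step requiring a moment's attention is the elementary remark that a linear functional annihilating every bracket automatically annihilates all of $\G^1$, because $\G^1$ is the linear span, not merely the set, of brackets.
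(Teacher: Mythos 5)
Your argument is correct: a linear functional annihilating every bracket annihilates the span $\G^1$, so $B_F\not\equiv 0$, and Proposition~\ref{prop2.4} together with the $MD$ dichotomy (and the evenness of ${\rm rank}\,B_F$) forces $\dim\Omega_F$ to be maximal. The paper states this proposition without proof, citing Do's book, and your reasoning is exactly the standard argument behind that citation.
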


\begin{prop}[{\bf see \cite{LHT11}}]\label{prop2.10}
	There is no $MD$-algebra\, $\G$ \,such that its second derived ideal \,$\G^2$ \,is not trivial and \,$\dim \G^2 = \dim \G^1 - 1$. In other words, if \,$0 < \dim \G^2 = \dim \G^1 - 1$ \,then \,$\G$\, is not an $MD$-algebra.
\end{prop}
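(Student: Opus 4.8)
The plan is a proof by contradiction that converts the numerical hypothesis into an impossible property of a single operator. Suppose $\G$ is an $MD$-algebra with $0 < \dim \G^2 = \dim \G^1 - 1$. By Proposition \ref{prop2.8} the ideal $\G^2$ is commutative, and this is the only place the $MD$-hypothesis enters. Since $\G^2$ has codimension one in $\G^1$, I would fix $X \in \G^1 \setminus \G^2$ and write $\G^1 = \R X \oplus \G^2$ as vector spaces; note that $\G^2$ is nonzero by hypothesis, is a proper subspace of $\G^1$, and is an ideal of $\G^1$, so in particular it is $\mathrm{ad}_X$-invariant.

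The first step is to express $\G^2$ through $\mathrm{ad}_X$. Expanding the bracket of two elements $aX+u$ and $bX+v$ of $\G^1$ (with $u,v \in \G^2$, $a,b \in \R$) and using $[X,X]=0$ together with the commutativity of $\G^2$, one gets $[aX+u,\,bX+v] = [X,\,av-bu] \in \mathrm{ad}_X(\G^2)$; conversely $[X,w] \in [\G^1,\G^1]=\G^2$ for every $w\in\G^2$. Hence $\G^2 = [\G^1,\G^1] = \mathrm{ad}_X(\G^2)$, so $\mathrm{ad}_X$ maps the finite-dimensional space $\G^2$ onto itself and is therefore a linear automorphism of $\G^2$.

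The second step contradicts this using solvability. Every $MD$-algebra is solvable by definition, so $\G^1 = [\G,\G]$ is a nilpotent Lie algebra (the classical fact that the derived algebra of a solvable Lie algebra over a field of characteristic zero is nilpotent). In a nilpotent Lie algebra $\mathfrak n$ the operator $\mathrm{ad}_Y$ is nilpotent for every $Y \in \mathfrak n$; applying this to $\mathfrak n = \G^1$ and $Y = X$, and restricting to the $\mathrm{ad}_X$-invariant subspace $\G^2$, the operator $\mathrm{ad}_X|_{\G^2}$ is nilpotent. But a linear operator on a finite-dimensional space that is at once an automorphism and nilpotent forces that space to be zero, so $\G^2 = 0$, contradicting $\dim \G^2 > 0$.

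I do not expect a genuine obstacle: the two substantive inputs are Proposition \ref{prop2.8} (commutativity of $\G^2$) and the classical implication ``solvable $\Rightarrow$ derived algebra nilpotent'', both available, while the remainder is routine bracket bookkeeping. The only points demanding a little care are the bookkeeping items already noted (that $\G^2$ is nonzero, proper in $\G^1$, and $\mathrm{ad}_X$-stable, all immediate from the hypotheses) and, when invoking nilpotency, that what is used is nilpotency of $\mathrm{ad}_X$ as an operator on $\G^1$ — equivalently on $\G^2$ — rather than merely the abstract nilpotency of the Lie algebra $\G^1$.
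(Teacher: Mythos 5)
Your proof is correct. There is nothing in the paper to compare it against line by line: Proposition \ref{prop2.10} is stated here with a pointer to \cite{LHT11} and no argument is reproduced, so your derivation has to stand on its own, and it does. The two substantive inputs are exactly the ones you flag: Proposition \ref{prop2.8} (an $MD$-algebra has commutative $\G^2$) and the classical fact that the derived algebra of a real solvable Lie algebra is nilpotent, so that $\mathrm{ad}_X$ is a nilpotent operator on $\G^1$ (hence on the invariant subspace $\G^2$) for every $X \in \G^1$. The identity $[aX+u,\,bX+v]=[X,\,av-bu]$ correctly gives $\G^2=[\G^1,\G^1]=\mathrm{ad}_X(\G^2)$, and an endomorphism of a finite-dimensional space that is simultaneously surjective and nilpotent forces that space to be zero. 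One small wording point: the $MD$-hypothesis actually enters in two places, not one --- through Proposition \ref{prop2.8} for the commutativity of $\G^2$, and through the definitional solvability of $\G$ that you need for the nilpotency of $\G^1$; both are of course guaranteed for an $MD$-algebra, so this is a matter of phrasing rather than a gap. It is worth noting that your argument proves the sharper, purely algebraic statement that no real solvable Lie algebra with commutative second derived ideal can satisfy $0<\dim\G^2=\dim\G^1-1$; the coadjoint-orbit condition is used only to secure the commutativity of $\G^2$, which is in the spirit of how the paper deploys Proposition \ref{prop2.8} elsewhere (e.g.\ in the proof of Theorem \ref{thm4.3}).
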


To illustrate and show the role of the $MD$-class, in the rest of this section, we will introduce some typical examples and counter-examples of $MD$-algebras.

\subsection{The Lie algebra of the group of affine transformations of real straight line}

The Lie algebra ${\rm aff} (\R)$ of the group ${\rm Aff} (\R)$ of affine transformations of real straight line $\R$ is the unique non-commutative real Lie algebra of dimension 2 and it is defined as follows:
	\[ {\rm aff} (\R) := Span( X, Y ); \,\, [X,Y] = Y.\]

\begin{rem}\label{rem2.11}
	Clearly, every real Lie algebra of dimension $n \leqslant 3$ is an $MD$-algebra. In particular ${\rm aff} (\R)$ is an $MD(2,1)$-algebra.
\end{rem}

\subsection{The real Heisenberg Lie algebras} 

The $(2m + 1)$-dimensional real Heisenberg Lie algebra ($0 < m \in \mathbb{N}$) is the following real Lie algebra:
$$\mathfrak{h}_{2m + 1}: = Span\left( X_i, Y_i, Z \, \vert \, i = 1, 2, \ldots, m\right) ; \, \, [X_i, Y_i] = Z; \, \, 
i = 1, 2, \ldots , m; $$
the other Lie brackets are trivial.

\begin{rem}
The first derived ideal $\mathfrak{h}^1_{2m + 1} = \mathbb{R}.Z = Span(Z)$ of $\mathfrak{h}_{2m + 1}$ is 1-dimensional and coincides with the center of $\mathfrak{h}_{2m + 1}$.
\end{rem}

Let $\left( X_1^*, Y_1^*, \ldots , X_m^*, Y_m^*, Z^* \right)$ be the dual basis of $\left( X_1, Y_1, \ldots , X_m, Y_m, Z \right)$ in the dual space $\mathfrak{h}^*_{2m + 1}$ of $\mathfrak h_{2m+1}$, and
	\[F =  a_1 X_1^* + b_1 Y_1^* + \ldots + a_m X_m^* + b_m Y_m^* + cZ \equiv (a_1, b_1, \ldots, a_m, b_m, c)\]
be an arbitrary element in $\mathfrak{h}^*_{2m + 1}$. Then the Kirillov form $B_F$ is given by the following matrix
	\[
		B_F =  \begin{bmatrix}
		            0 & c & 0 & 0 & \cdots & 0 & 0 & 0 \\
					-c& 0 & 0 & 0 & \cdots & 0 & 0 & 0 \\
					0 & 0 & 0 & c & \cdots & 0 & 0 & 0 \\
					0 & 0 &-c & 0 & \cdots & 0 & 0 & 0 \\
					\vdots & \vdots & \vdots & \vdots & \ddots & \vdots & \vdots & \vdots \\
					0 & 0 & 0 & 0 & \cdots & 0 & c & 0 \\
					0 & 0 & 0 & 0 & \cdots &-c & 0 & 0 \\
					0 & 0 & 0 & 0 & \cdots & 0 & 0 & 0 \\
					\end{bmatrix}
					= {\rm diag} \, (\Lambda, \cdots, \Lambda, 0)
	\]
with $m$ blocks $\Lambda = \begin{bmatrix} 0 & c \\ - c & 0 \end{bmatrix}$.

In view of Proposition \ref{prop2.4}, it is a simple matter to get the following proposition.

\begin{prop}
$\mathfrak{h}_{2m + 1}$ is one $MD(2m+1, 1)$-algebra and the maximal dimension of $K$-orbits in $\mathfrak{h}^*_{2m + 1}$ is $2m$. Moreover, for $F = (a_1, b_1, \ldots , a_m, b_m, c) \in \mathfrak{h}^*_{2m + 1}$, we have
\begin{itemize}
	\item[(i)] $K$-orbits containing $F$ is of dimension $0$ if and only if $c = 0$.
	\item[(ii)] $K$-orbits containing $F$ is of dimension $2m$ if and only if $c \neq 0$.
\end{itemize}
\end{prop}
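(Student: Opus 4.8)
The plan is to derive everything from the explicit matrix of the Kirillov form $B_F$ displayed above, combined with Proposition~\ref{prop2.4}. Fix the ordered basis $(X_1, Y_1, \ldots, X_m, Y_m, Z)$ of $\mathfrak{h}_{2m+1}$ and recall that the only nonzero brackets are $[X_i, Y_i] = Z$ and that $Z$ is central. For $F = (a_1, b_1, \ldots, a_m, b_m, c) \in \mathfrak{h}^*_{2m+1}$, the identity $B_F(U,V) = \langle F, [U,V]\rangle$ then forces $B_F$ to have exactly the block form $B_F = {\rm diag}(\Lambda, \ldots, \Lambda, 0)$ with $m$ copies of $\Lambda = \begin{bmatrix} 0 & c \\ -c & 0 \end{bmatrix}$: no other entries can appear, since all brackets not of the form $[X_i, Y_i]$ vanish and the bracket of anything with $Z$ is zero. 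In particular $B_F$ depends on $F$ only through the single coordinate $c$.

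Next I would compute the rank. Each $2\times 2$ block satisfies $\det \Lambda = c^2$, so it has rank $2$ when $c \neq 0$ and rank $0$ when $c = 0$, while the last $1\times 1$ block is always zero; hence ${\rm rank}\, B_F = 2m$ if $c \neq 0$ and ${\rm rank}\, B_F = 0$ if $c = 0$. Equivalently $\ker B_F = \mathbb{R}.Z$ when $c \neq 0$ and $\ker B_F = \mathfrak{h}_{2m+1}$ when $c = 0$. Applying Proposition~\ref{prop2.4}, $\dim \Omega_F = {\rm rank}\, B_F$, so $\Omega_F$ has dimension $0$ precisely when $c = 0$ and dimension $2m$ precisely when $c \neq 0$; this is exactly assertions (i) and (ii).

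Finally, since every coadjoint orbit is even-dimensional and $\dim \mathfrak{h}_{2m+1} = 2m+1$, no orbit can have dimension exceeding $2m$, and $2m$ is attained, e.g.\ at $F = Z^*$ (where $c = 1$). Therefore all $K$-orbits have dimension either zero or the maximal value $2m$, so $\mathfrak{h}_{2m+1}$ is an $MD$-algebra; as it has dimension $2m+1$ with $1$-dimensional derived ideal $\mathfrak{h}^1_{2m+1} = \mathbb{R}.Z$, it is an $MD(2m+1,1)$-algebra, and the maximal dimension of its $K$-orbits is $2m$. I do not anticipate a real obstacle here: once the matrix of $B_F$ is in hand the whole argument reduces to a one-line rank computation fed into Proposition~\ref{prop2.4}, and the only step needing a little care is justifying that the displayed block form of $B_F$ is complete, i.e.\ that no stray off-diagonal entries occur, which is immediate from the bracket relations.
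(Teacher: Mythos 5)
Your proposal is correct and follows exactly the route the paper takes: write down the Kirillov form $B_F = \mathrm{diag}(\Lambda,\ldots,\Lambda,0)$ from the bracket relations, read off $\mathrm{rank}\,B_F \in \{0, 2m\}$ according to whether $c$ vanishes, and apply Proposition~\ref{prop2.4}. The paper leaves the verification as ``a simple matter''; your write-up simply fills in those details, including the observation that $\mathfrak{h}^1_{2m+1} = \mathbb{R}.Z$ gives the $MD(2m+1,1)$ classification.
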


\subsection{The real Diamond Lie algebras}

The $(2m + 2)$-dimensional real Diamond Lie algebra $(0 < m \in \mathbb{N})$ is one semi-direct extension of the $(2m+1)$-dimensional Heisenberg algebra by $\R$, namely it is the following real Lie algebra:
$$\R.\mathfrak{h}_{2m + 1}: = Span(X_i, Y_i, Z, T \, \vert \, i = 1, 2, \ldots, m)$$
where the Lie structure is given by 
$$[X_i, Y_i] = Z, \, [T, X_i] = - X_i, \, [T, Y_i] = Y_i; \, i = 1, 2, \ldots, m; $$
the other Lie brackets are trivial.

\begin{rem}
	The $(2m+1)$-dimensional real Heisenberg algebra is the first derived ideal of the $(2m + 2)$-dimensional real Diamond Lie algebra. In particular, the first derived ideal of $\R.\mathfrak{h}_{2m + 1}$ is of codimension 1.
\end{rem}

Let \,$\left( X_1^*, Y_1^*, \ldots, X_m^*, Y_m^*, Z^*, T^* \right)$\, be the dual basis of \,$(X_1, Y_1, \ldots, X_m, Y_m,$\\ $Z, T)$ in the dual space $(\R.\mathfrak{h}_{2m + 1})^*$ of $\R.\mathfrak h_{2m+1}$ and $F =  a_1 X_1^* + b_1 Y_1^* + \ldots + +  a_m X_m^* + b_m Y_m^* + cZ + dT \equiv (a_1, b_1, \ldots, a_m, b_m, c, d)$ be an arbitrary element in $(\R.\mathfrak{h}_{2m + 1})^*$. Then we get the Kirillov form $B_F$ as follows
		$$B_F = \begin{bmatrix}
		            0 & c & 0 & 0 & \cdots & 0 & 0 & 0 & a_1 \\
					-c& 0 & 0 & 0 & \cdots & 0 & 0 & 0 & -b_1 \\
					0 & 0 & 0 & c & \cdots & 0 & 0 & 0 & a_2 \\
					0 & 0 &-c & 0 & \cdots & 0 & 0 & 0 & -b_2 \\
					\vdots & \vdots & \vdots & \vdots & \ddots & \vdots & \vdots & \vdots & \vdots \\
					0 & 0 & 0 & 0 & \cdots & 0 & c & 0 & a_m \\
					0 & 0 & 0 & 0 & \cdots &-c & 0 & 0 & -b_m\\
					0 & 0 & 0 & 0 & \cdots & 0 & 0 & 0 & 0 \\
					-a_1 & b_1 & -a_2 & b_2 & \cdots & -a_m & b_m & 0 & 0
					\end{bmatrix}.$$
By virtue of Proposition \ref{prop2.4}, one can verify the following proposition.

\begin{prop}
	The $(2m+2)$-dimensional real Diamond Lie algebra $\R.\mathfrak{h}_{2m+1}$ is an $MD(2m+2, 2m+1)$-algebra if and only if \,$m = 1$. That means the 4-dimensional real Diamond Lie algebra is an $MD(4,3)$-algebra and the $(2m+2)$-dimensional real Diamond Lie algebra is not an $MD$-algebra for every natural number $m > 1$.
\end{prop}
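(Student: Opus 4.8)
The plan is to apply Proposition~\ref{prop2.4}, which gives $\dim\Omega_F={\rm rank}\,B_F$, and then to determine the rank of the displayed Kirillov matrix as $F=(a_1,b_1,\ldots,a_m,b_m,c,d)$ ranges over $(\R.\mathfrak{h}_{2m+1})^{*}$. A first remark is that $d$ does not occur in $B_F$ at all, because every bracket in $\R.\mathfrak{h}_{2m+1}$ lands in $\G^{1}=\mathfrak{h}_{2m+1}$, on which the coordinate $d$ is not evaluated; hence ${\rm rank}\,B_F$ is a function of $(a_1,\ldots,b_m,c)$ only, and I would split the analysis into the two cases $c\neq 0$ and $c=0$.

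If $c\neq 0$, the $2m\times 2m$ leading block $\mathrm{diag}(\Lambda,\ldots,\Lambda)$ with $\Lambda=\begin{bmatrix}0&c\\-c&0\end{bmatrix}$ is invertible, so ${\rm rank}\,B_F\geqslant 2m$. On the other hand the row and the column of $B_F$ indexed by $Z$ vanish identically, so ${\rm rank}\,B_F\leqslant 2m+1$, and since $B_F$ is skew-symmetric its rank must be even; therefore ${\rm rank}\,B_F=2m$ for every $F$ with $c\neq 0$, and $2m$ is the maximal possible value. If $c=0$, then every $\Lambda$-block is zero and $B_F$ is supported only on its last row and last column, where it equals $\pm(a_1,-b_1,a_2,-b_2,\ldots,a_m,-b_m,0,0)$; such a matrix has the form $ve^{T}-ev^{T}$ with $e$ a coordinate vector and $v$ having no component along $e$, so its rank is $2$ if $v\neq 0$ and $0$ if $v=0$. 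Consequently ${\rm rank}\,B_F=2$ when $c=0$ and $(a_1,\ldots,b_m)\neq 0$, while ${\rm rank}\,B_F=0$ when $c=a_1=\cdots=b_m=0$.

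Combining the two cases, every $K$-orbit of $\R.\mathfrak{h}_{2m+1}$ has dimension $0$, $2$ or $2m$, with $2m$ maximal and the value $2$ actually attained (for instance at $F=X_1^{*}$); all three values are distinct when $m\geqslant 2$. By the definition of an $MD$-group, $\R.\mathfrak{h}_{2m+1}$ is an $MD$-algebra precisely when every nonzero-dimensional orbit has the maximal dimension, i.e. precisely when $2=2m$, that is $m=1$. Finally, since $\dim(\R.\mathfrak{h}_{2m+1})^{1}=\dim\mathfrak{h}_{2m+1}=2m+1$, the surviving case $m=1$ yields an $MD$-algebra of dimension $4$ whose derived ideal is $3$-dimensional, i.e. an $MD(4,3)$-algebra, and for $m>1$ the algebra fails to be an $MD$-algebra; this is the assertion of the proposition.

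The only step that requires genuine care is the upper bound ${\rm rank}\,B_F\leqslant 2m$ in the case $c\neq 0$: a priori $B_F$ is a $(2m+2)\times(2m+2)$ matrix and could have rank up to $2m+2$, so one must use \emph{both} the vanishing of the $Z$-row and $Z$-column \emph{and} the evenness of the rank of a skew-symmetric form to push the bound down from $2m+1$ to $2m$. Everything else is a straightforward inspection of the given matrix, and the rank-$2$ verification in the case $c=0$ is routine.
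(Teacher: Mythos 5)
Your proof is correct and follows exactly the route the paper intends: the paper displays the Kirillov matrix $B_F$ and simply asserts that Proposition~\ref{prop2.4} lets one verify the claim, and your rank computation (rank $2m$ when $c\neq 0$ via the invertible $\Lambda$-blocks plus the zero $Z$-row/column and evenness of skew-symmetric rank; rank $0$ or $2$ when $c=0$) is precisely that verification. The conclusion that the orbit dimensions are $\{0,2,2m\}$, forcing $2=2m$, is sound, so nothing further is needed.
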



\section{Some subclasses of $MD$-class}\label{Sect3}

In this section, we would like to introduce some well-known remarkable results of classification of $MD$-class. First, recall that all of the $MD$-algebras of dimension 4 or 5 were classified, up to isomorphism, by A. V. Le et al. \cites{Le90-1, Le93, LS08}. However, to illustrate the general results which will be given in the last section of the paper, we will introduce here the classification of $MD(n,1)$-class and $MD(n,n-1)$-class for small $n$, namely $n = 4$ or $n = 5$.


\subsection{Classification of $MD(4,1)$-class and $MD(4,3)$-class}

\begin{prop}[{\bf Classification of $MD(4,1)$-algebras, see \cite{Le93}}]\label{prop3.1}
Let $\G$ be an $MD(4,1)$-algebra. Then $\G$ is decomposable and we can choose a suitable basis $(X, Y, Z, T)$ of $\G$ such that $\G^1 = Span(Z) = \R.Z,$ and $\G$ is isomorphic to one of the following Lie algebras. 
\begin{itemize}
   		\item[1.1.] $\G_{4,1,1}: = \mathfrak{h}_3 \oplus \R.T,\, [X,Y] = Z;$\, the others Lie brackets are trivial.
   		\item[1.2.] $\G_{4,1,2}: = \text{aff} (\R) \oplus \R.Z \oplus \R.T,\, [X,Y] = Y;$\, the others Lie brackets are trivial.
\end{itemize}
\end{prop}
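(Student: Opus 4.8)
The plan is to treat the $MD$ hypothesis as essentially dispensable here: since $\dim\G^{1}=1$ we get $\G^{2}\subseteq[\G^{1},\G^{1}]=0$, so $\G$ is automatically solvable, and in fact every four-dimensional real Lie algebra with one-dimensional derived ideal turns out to be an $MD$-algebra, so the real task is just the structural classification of such algebras. I would fix a generator $Z$ of $\G^{1}=\R.Z$. Because $[\G,\G]\subseteq\R.Z$, there is a unique skew-symmetric bilinear form $\omega$ on $\G$ with $[U,V]=\omega(U,V)\,Z$ for all $U,V\in\G$, and $\omega\neq0$ since $\G^{1}\neq0$; put $\alpha:=\omega(Z,\cdot)\in\G^{*}$.

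The first key step is to rewrite the Jacobi identity as an exterior-algebra condition. Expanding $[[U,V],W]+[[V,W],U]+[[W,U],V]$ with the bracket formula, one finds it equals $(\alpha\wedge\omega)(U,V,W)\,Z$, so the data $(\omega,Z)$ defines a Lie algebra exactly when $\alpha\wedge\omega=0$ in $\Lambda^{3}\G^{*}$. The classification then splits according to whether $Z$ is central, i.e.\ whether $\alpha=0$.

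In the case $\alpha=0$, $\omega$ annihilates $\R.Z$ and hence descends to a nonzero skew form $\bar\omega$ on the three-dimensional quotient $\G/\R.Z$; a skew form on a three-dimensional space has rank $0$ or $2$, and here it is nonzero, so its rank is $2$. Picking $\bar X,\bar Y,\bar T$ in $\G/\R.Z$ with $\bar\omega(\bar X,\bar Y)=1$ and $\bar T\in\ker\bar\omega$ and lifting arbitrarily, I get a basis $(X,Y,Z,T)$ of $\G$ with $[X,Y]=Z$ and all other brackets zero, i.e.\ $\G\cong\mathfrak{h}_{3}\oplus\R.T=\G_{4,1,1}$. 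In the case $\alpha\neq0$, I would invoke the elementary fact that $\alpha\wedge(-)\colon\Lambda^{2}\G^{*}\to\Lambda^{3}\G^{*}$ has kernel exactly $\alpha\wedge\G^{*}$ (complete $\alpha$ to a basis of $\G^{*}$ and compare components) to write $\omega=\alpha\wedge\beta$; here $\alpha,\beta$ are independent since $\omega\neq0$, and comparing $\omega(Z,\cdot)$ with $\alpha$, using $\alpha(Z)=\omega(Z,Z)=0$, forces $\beta(Z)=-1$. Choosing $A\in\G$ with $\alpha(A)=1$, $\beta(A)=0$ gives $[A,Z]=\omega(A,Z)Z=-Z$, so ${\rm Span}(Z,A)\cong{\rm aff}(\R)$; the plane $\mathfrak{c}:=\ker\alpha\cap\ker\beta$ is two-dimensional, is killed by $\omega$ in each slot (hence is central and bracket-trivial), and meets ${\rm Span}(Z,A)$ trivially because $\beta(Z)\neq0$, so $\G={\rm Span}(Z,A)\oplus\mathfrak{c}\cong{\rm aff}(\R)\oplus\R^{2}$, which after relabelling the basis is $\G_{4,1,2}$.

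In both cases $\G$ is manifestly decomposable, and the two algebras are non-isomorphic since $\G_{4,1,1}$ is nilpotent while $\G_{4,1,2}$ is not. To round off the classification I would then check that each of $\G_{4,1,1}$ and $\G_{4,1,2}$ is genuinely an $MD(4,1)$-algebra, which is immediate from the $MD$-property of $\mathfrak{h}_{3}$ and of ${\rm aff}(\R)$ noted in Section~2 together with the observation that $\Omega_{(F_{1},F_{2})}=\Omega_{F_{1}}\times\{F_{2}\}$ for a direct sum of a Lie algebra with an abelian one, so $MD$-ness is inherited. The step I expect to need the most care is the translation of the Jacobi identity into $\alpha\wedge\omega=0$ and, in the non-central case, the passage to $\omega=\alpha\wedge\beta$ with its constraint $\beta(Z)=-1$; once those are in hand everything else is routine linear algebra.
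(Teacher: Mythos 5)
Your proof is correct: the reformulation of the Jacobi identity as $\alpha\wedge\omega=0$ is right (the cyclic sum equals $\bigl(\alpha(U)\omega(V,W)+\alpha(V)\omega(W,U)+\alpha(W)\omega(U,V)\bigr)Z$), the divisibility $\omega=\alpha\wedge\beta$ with the normalization $\beta(Z)=-1$ goes through, and the final $MD$-verification via $\Omega_{(F_1,F_2)}=\Omega_{F_1}\times\{F_2\}$ for direct sums with abelian factors is sound. The paper itself only cites \cite{Le93} for Proposition \ref{prop3.1}, but it proves the general Theorem \ref{thm4.1} using the same dichotomy you do --- whether the generator of $\G^1$ is central --- with a different execution: in the non-central case it performs two explicit changes of basis and applies the Jacobi identity coordinatewise to kill the remaining brackets (Lemma \ref{lem4.7}), and in the central case it encodes the brackets in a skew-symmetric structure matrix, shows that isomorphism amounts to proportional congruence $cA=C^TBC$ (Lemma \ref{lem4.11}), and invokes the canonical form of skew-symmetric matrices. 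Your exterior-algebra packaging is coordinate-free and, in dimension $4$, cheaper: Jacobi becomes one equation, the non-central case follows from the elementary kernel computation for $\alpha\wedge(-)$, and the central case needs only that a nonzero alternating form on a $3$-dimensional quotient has rank $2$. What the paper's heavier machinery buys is uniformity in $n$: the congruence normal form handles the central case in arbitrary dimension (parts (iii)--(iv) of Theorem \ref{thm4.1}), where your rank argument would have to be replaced by the symplectic basis theorem --- the same canonical form in disguise.
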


\begin{prop}[{\bf Classification of $\mathbf{\mathit{MD(4,3)}}$-algebras, see \cite{Le93}}]\label{prop3.2}
	Let $\G$ be an $MD(4,3)$-algebra. Then $\G$ must be indecomposable and we can choose a suitable basis $(X, Y, Z, T)$ of $\G$ such that $\G$ is isomorphic to one of the following Lie algebras. 
\begin{itemize}
   \item[1.] $\G^1 = Span(X,Y,Z)\equiv \R^3$, \, $ad_T \in Aut_{\R} \left(\G^1\right) \equiv GL_3(\R)$
       \begin{itemize}
        \item[1.1.] $\G_{4,3,1(\lambda_1, \lambda_2)}:$\,
$ad_T = \begin{pmatrix} \lambda_1 & 0 & 0 \\ 0 & \lambda_2 & 0 \\ 0 & 0 & 1 \end{pmatrix};
\, \lambda_1, \lambda_2 \in \R \setminus \lbrace 0\rbrace.$

        \item[1.2.] $\G_{4,3,2(\lambda)}:$\,
$ad_T = \begin{pmatrix} \lambda & 1 & 0 \\ 0 & \lambda & 0 \\ 0 & 0 & 1 \end{pmatrix};
\, \lambda \in \R \setminus \lbrace 0\rbrace.$

        \item[1.3.] $\G_{4,3,3}:$\,
$ad_T = \begin{pmatrix} 1 & 1 & 0 \\ 0 & 1 & 1 \\ 0 & 0 & 1 \end{pmatrix}.$

        \item[1.4.] $\G_{4,3,4(\lambda,\varphi)}:$\,
$ad_{{X}_1} = \begin{pmatrix}\cos\varphi & -\sin\varphi & 0 \\ \sin\varphi & \cos\varphi & 0 \\ 0 & 0 & \lambda \end{pmatrix}; \lambda \in \R \setminus \lbrace 0\rbrace, \varphi \in (0, \pi).$
	  \end{itemize}
	  
	\item[2.] $\G^1 = Span(X,Y,Z) = \mathfrak{h}_3, \, ad_T \in 
End_{\R} \left(\G^1\right) \equiv Mat_3(\R)$
	  \begin{itemize}
        \item[2.1.] $\G_{4,4,1}:$ \,
$ad_T = \begin{pmatrix}
0& 1& 0 \\ -1 & 0 & 0 \\ 0 & 0 & 0 \end{pmatrix}.$
        \item[2.2.] $\G_{4,4,2} = {\R}.\mathfrak{h}_3$ (the 4-dimensional Diamond Lie algebra) $:$ 
$$ad_T = \begin{pmatrix}
-1 & 0 & 0 \\ 0 & 1 & 0 \\ 0 & 0 & 0 \end{pmatrix}.$$
	  \end{itemize}
\end{itemize}
\end{prop}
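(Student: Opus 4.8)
The plan is to exploit the codimension-one hypothesis to reduce the whole problem to linear algebra, using Propositions~\ref{prop2.4} and~\ref{prop2.10} to restrict the possibilities. First I would settle indecomposability essentially for free: if $\G=\A\oplus\B$ with $0<\dim\A\leqslant\dim\B$, then $\dim\G^1=\dim\A^1+\dim\B^1$, and since a nonzero solvable Lie algebra has derived ideal of strictly smaller dimension, a summand of dimension $k$ contributes at most $k-1$ to $\dim\G^1$; for $\dim\G=4$ this total can never reach $3$, so every $4$-dimensional solvable algebra with $3$-dimensional derived ideal — in particular every $MD(4,3)$-algebra — is indecomposable. Next, fix $T\in\G\setminus\G^1$, so that $\G=\R T\oplus\G^1$ as a vector space with $\G^1$ an ideal; the bracket is then completely determined by the $3$-dimensional Lie algebra $\G^1$ together with the derivation $D:=\operatorname{ad}_T|_{\G^1}\in\operatorname{Der}(\G^1)$, and two such data give isomorphic Lie algebras precisely when they are matched by an automorphism of $\G^1$, a rescaling of $T$, and a translation $T\mapsto T+v$ ($v\in\G^1$), the last of which alters $D$ by an inner derivation of $\G^1$.

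Second, I would determine $\G^1$. Here $\G^2=[\G^1,\G^1]$ and $\G^1$ is $3$-dimensional solvable, so $\dim\G^2\in\{0,1,2\}$; Proposition~\ref{prop2.10} excludes $\dim\G^2=2$, leaving $\dim\G^2\in\{0,1\}$. Up to isomorphism the only $3$-dimensional solvable Lie algebras with derived ideal of dimension $\leqslant1$ are $\R^3$, the Heisenberg algebra $\mathfrak h_3$, and ${\rm aff}(\R)\oplus\R$, and the last cannot occur here: since $[\G,\G]=D(\G^1)+\G^2$ and every derivation of ${\rm aff}(\R)\oplus\R$ preserves both its centre and its characteristic line $\G^2$, the subspace $D(\G^1)+\G^2$ lies inside a fixed plane, contradicting $\dim\G^1=3$. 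Thus either $\G^1\cong\R^3$ (Case~1 of the statement) or $\G^1\cong\mathfrak h_3$ (Case~2).

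Third, Case~1. Writing $A:=\operatorname{ad}_T$ on $\G^1\cong\R^3$, the equality $[\G,\G]=A(\R^3)=\G^1$ forces $A\in GL_3(\R)$, and conversely every $A\in GL_3(\R)$ yields such an algebra. In a basis $(T,e_1,e_2,e_3)$ the Kirillov form of $F$ has the $1+3$ block shape $\left(\begin{smallmatrix}0&w^{t}\\-w&0\end{smallmatrix}\right)$ with $w=A^{t}(f_1,f_2,f_3)^{t}$, so $\operatorname{rank}B_F$ is $0$ when $(f_1,f_2,f_3)=0$ and $2$ otherwise; by Proposition~\ref{prop2.4} every such $\G$ is automatically an $MD$-algebra. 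The classification then reduces to listing $A\in GL_3(\R)$ up to conjugacy and nonzero scalar — i.e. normalising real Jordan canonical forms by scaling — which gives exactly $\G_{4,3,1(\lambda_1,\lambda_2)}$ (three real eigenvalues), $\G_{4,3,2(\lambda)}$ (a $2\times2$ Jordan block and an eigenvalue), $\G_{4,3,3}$ (a single $3\times3$ Jordan block) and $\G_{4,3,4(\lambda,\varphi)}$ (a complex-conjugate pair and a real eigenvalue).

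Finally, Case~2, which is where the $MD$-hypothesis really does something. Write $\mathfrak h_3=\operatorname{Span}(X,Y,Z)$ with $[X,Y]=Z$. A derivation $D$ of $\mathfrak h_3$ preserves $\langle Z\rangle$ and acts there by the trace of its $\langle X,Y\rangle$-block; subtracting an inner derivation (a translation of $T$) we may assume $D=\left(\begin{smallmatrix}M&0\\0&\operatorname{tr}M\end{smallmatrix}\right)$, and $[\G,\G]=\G^1$ forces $M\in GL_2(\R)$. A short Pfaffian computation gives $\det B_F=(\operatorname{tr}M)^{2}f_Z^{\,4}$: if $\operatorname{tr}M\neq0$ the maximal $K$-orbit dimension is $4$, yet any $F$ with $f_Z=0$ and $(f_X,f_Y)\neq0$ has $\operatorname{rank}B_F=2$, violating the $MD$-property; hence $\operatorname{tr}M=0$, and then $\operatorname{rank}B_F\in\{0,2\}$, so the $MD$-condition indeed holds. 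Classifying traceless invertible real $2\times2$ matrices up to conjugacy and scaling leaves only $\operatorname{diag}(1,-1)$ and $\left(\begin{smallmatrix}0&-1\\1&0\end{smallmatrix}\right)$ (a traceless Jordan block being singular), which produce $\G_{4,4,2}=\R.\mathfrak h_3$ and $\G_{4,4,1}$ respectively. I expect the real burden to be the normal-form bookkeeping in Cases~1 and~2 — pinning down the correct equivalence (conjugation, rescaling of $T$, inner derivations) and the admissible parameter ranges and residual identifications inside the families $\G_{4,3,1},\dots,\G_{4,3,4}$ — while the one genuinely new ingredient, the impossibility of $\operatorname{tr}M\neq0$, is just the Pfaffian computation above; verifying that the listed algebras are pairwise non-isomorphic is then a routine invariant computation.
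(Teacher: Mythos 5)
Your proposal is correct. Note that the paper itself does not prove Proposition~\ref{prop3.2}; it is quoted from \cite{Le93}. Your argument is nevertheless a sound self-contained derivation, and it runs along exactly the lines the paper later uses for the general case: your Case~1 (abelian $\G^1$, reduction to invertible matrices up to conjugacy and nonzero scaling) is precisely the $n=4$ instance of Theorems~\ref{thm4.3}(i) and~\ref{thm4.5}, while your Case~2 isolates the one phenomenon that Theorem~\ref{thm4.3}(ii) rules out for $n>4$, namely a non-commutative derived ideal $\mathfrak h_3$, and your Pfaffian computation $\det B_F=(\operatorname{tr}M)^2 f_Z^4$ forcing $\operatorname{tr}M=0$ is the correct mechanism there (it is the $n=4$ analogue of the paper's Lemma~\ref{lem4.15} argument). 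The subsidiary steps all check out: indecomposability by counting derived-ideal dimensions of summands, exclusion of $\G^1\cong{\rm aff}(\R)\oplus\R$ because every derivation maps it into the centralizer of its derived line so that $[\G,\G]$ would be at most two-dimensional, normalization of the derivation by inner derivations, and the residual conjugacy-plus-scaling classification yielding exactly the six families listed.
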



\subsection{Classification of $MD(5,1)$-class and $MD(5,4)$-class}

\begin{prop}[{\bf Classification of $MD(5,1)$-algebras, see \cite{LS08, LHT11}}]\label{prop3.3}
Let $\G$ be an $MD(5,1)$-algebra, Then we can choose a suitable basis $(X_{1}, X_{2}, X_{3},$ $X_{4}, X_{5})$ of $\G$ such that $\G^1 = Span(X_5) = \R.X_5$ and $\G$ is isomorphic to one of the following Lie algebras.
    \begin{itemize}
    	\item[1.]  $\G_{5,1,1} = \mathfrak{h}_5$ (the 5-dimensional real Heisenberg Lie algebra)$: [X_{1}, X_{2}]=[X_{3},X_{4}]=X_{5};$ the other Lie brackets are trivial. In this case, $\G$ is indecomposable.
        \item[2.] $\G_{5,1,2} = {\rm aff} (\R) \oplus \R.X_1 \oplus \R.X_2 \oplus \R.X_3: [X_4, X_5]= X_5;$ the other Lie brackets are trivial. In this case, $\G$ is decomposable.
    \end{itemize}
\end{prop}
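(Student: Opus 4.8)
The plan is to turn this into a purely structural problem: first observe that the hypothesis $\dim\G^1=1$ forces the $MD$ property automatically, and then classify all five-dimensional real Lie algebras with one-dimensional derived ideal according to how $\G$ acts on that ideal.

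First I would fix a nonzero $X_5\in\G^1$, so $\G^1=\R.X_5$. Since $\G^1$ is one-dimensional it is abelian, hence $\G^2=0$ and $\G$ is solvable; and since every bracket lies in $\R.X_5$ there is a skew-symmetric bilinear form $c$ on $\G$ with $[X,Y]=c(X,Y)X_5$. Writing $C$ for the matrix of $c$ in the chosen basis, the Kirillov form of any $F\in\G^*$ is $B_F=\langle F,X_5\rangle\,C$, so by Proposition~\ref{prop2.4} the orbit $\Omega_F$ is a point when $\langle F,X_5\rangle=0$ and has the fixed dimension ${\rm rank}\,C$ when $\langle F,X_5\rangle\neq0$. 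Thus every five-dimensional real Lie algebra with one-dimensional derived ideal is automatically an $MD$-algebra, so $MD(5,1)$-class is exactly the set of such algebras and it remains only to classify them up to isomorphism.

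Because $\R.X_5$ is an ideal, there is a linear functional $\lambda\in\G^*$ with $[X,X_5]=\lambda(X)X_5$ for all $X\in\G$, and $\lambda(X_5)=0$. I would then split into two cases. If $\lambda=0$, then $X_5$ is central, so $c$ descends to a skew form $\bar c$ on $V:=\G/\R.X_5\cong\R^4$, which is nonzero (as $\G^1\neq0$) and hence of rank $2$ or $4$; choosing a Darboux basis of $V$ and lifting it to $\G$ together with $X_5$ produces a basis whose only nontrivial brackets are $[X_1,X_2]=X_5$ in the rank-$2$ case and $[X_1,X_2]=[X_3,X_4]=X_5$ in the rank-$4$ case, giving respectively the decomposable algebra $\mathfrak{h}_3\oplus\R^2$ and $\mathfrak{h}_5=\G_{5,1,1}$. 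If $\lambda\neq0$, pick $T$ with $\lambda(T)=1$, so $[T,X_5]=X_5$, and set $N:=\ker\lambda$, a four-dimensional ideal containing $X_5$. Feeding $(T,Y,W)$ with $Y,W\in N$ into the Jacobi identity forces $c(Y,W)=0$, i.e. $N$ is abelian; since ${\rm ad}_T$ sends $N$ into $\R.X_5$ with ${\rm ad}_T X_5=X_5\neq0$, the kernel $N_0$ of ${\rm ad}_T|_N$ is three-dimensional, omits $X_5$, and — being annihilated by ${\rm ad}_T$ and contained in the abelian ideal $N$ — lies in the center of $\G$. Hence $N=N_0\oplus\R.X_5$ and $\G$ is the direct sum of the subalgebra spanned by $T,X_5$, which is ${\rm aff}(\R)$, and the central ideal $N_0\cong\R^3$, i.e. $\G\cong{\rm aff}(\R)\oplus\R^3=\G_{5,1,2}$.

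Finally I would record that the algebras obtained are pairwise non-isomorphic — ${\rm aff}(\R)\oplus\R^3$ is not nilpotent, while $\mathfrak{h}_5$ and $\mathfrak{h}_3\oplus\R^2$ are distinguished by the dimensions of their centers — so the list is irredundant, each entry being an $MD(5,1)$-algebra by the second paragraph. The genuinely delicate step is the case $\lambda\neq0$: the whole decomposition hinges on the Jacobi-identity computation that makes $\ker\lambda$ abelian, since without it one could not rule out a Heisenberg-type interaction inside $\ker\lambda$ that would fail to split off the ${\rm aff}(\R)$ factor; by contrast, once the $MD$ condition is seen to be automatic, everything else is standard normal-form linear algebra for skew forms and for the rank-one map ${\rm ad}_T$.
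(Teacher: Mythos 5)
Your argument is correct, and it is essentially the specialization to $n=5$ of the route the paper itself takes for the general Theorem~\ref{thm4.1}: split on whether the generator of $\G^1$ is central (your functional $\lambda$ is the paper's vector $\mathbf{a}$), use the Jacobi identity with the element $T$ satisfying $[T,X_5]=X_5$ to kill all brackets inside $\ker\lambda$ (the paper's Lemma~\ref{lem4.7}), and in the central case reduce to the Darboux normal form of the induced skew form (the paper's Lemmas~\ref{lem4.8}, \ref{lem4.9}, \ref{lem4.11}). One genuine streamlining on your side: the identity $B_F=\langle F,X_5\rangle\,C$ disposes of the $MD$ property uniformly in both cases at the outset, whereas the paper verifies it separately in each case. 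The point worth flagging is that your (correct) analysis produces \emph{three} isomorphism classes, not two: in the central case with ${\rm rank}\,\bar c=2$ you obtain $\mathfrak{h}_3\oplus\R^2$, which does not appear in the list of Proposition~\ref{prop3.3} as printed. This is not a defect of your proof but of the recalled statement: the paper's own Theorem~\ref{thm4.1}(iv), with $n=5$ and $2m+1=3$, confirms that $\mathfrak{h}_3\oplus\R^2$ is a decomposable $MD(5,1)$-algebra distinct from $\mathfrak{h}_5$ (centers of dimension $3$ versus $1$) and from ${\rm aff}(\R)\oplus\R^3$ (which is not nilpotent). So your derivation actually corrects an omission in Proposition~\ref{prop3.3} and brings it into agreement with the general classification proved later in the paper.
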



\begin{prop}[{\bf Classification of $MD(5,4)$-algebras, see \cite{LS08}}]\label{prop3.4}
	Let $\G$ be an $MD(5,4)$-algebra. Then $\G$ must be indecomposable, and $\G^1$ is commutative. Moreover, we can choose a suitable basis $(X_1, X_2, X_3,X_4,X_5)$ of $\G$ such that $\G^1 = Span(X_2, X_3,X_4, X_5) \equiv \R^4$, $ad_{X_1} \in Aut(\G^{1}) \equiv GL_{4}(\R)$ and $\G$ is isomorphic to one of the following Lie algebras. 
   \begin{itemize}
     \item[4.1.]${\mathcal{G}}_{5,4,1({\lambda}_{1}, {\lambda}_{2}, {\lambda}_{3})}:
 ad_{{X}_1} = \begin{pmatrix} {{\lambda}_1}&0&0&0\\
 0&{{\lambda}_2}&0&0\\0&0&{\lambda}_{3}&0\\0&0&0&1\end{pmatrix};
  {\lambda}_1, {\lambda}_2, {\lambda}_3 \in \mathbb{R}\setminus
  \lbrace 0, 1\rbrace;$\,\,\, ${\lambda}_1 \neq {\lambda}_2 \neq {\lambda}_3 \neq
  {\lambda}_1.$
     \item[4.2.]${\mathcal{G}}_{5,4,2({\lambda}_{1}, {\lambda}_{2})}:\,
ad_{{X}_1} = \begin{pmatrix} {{\lambda}_1}&0&0&0\\
0&{{\lambda}_2}&0&0\\0&0&1&0\\0&0&0&1\end{pmatrix};\,
{\lambda}_{1}, {\lambda}_{2} \in \mathbb{R}\setminus \lbrace 0, 1
\rbrace , {\lambda}_1 \neq {\lambda}_2. $

        \item[4.3.]${\mathcal{G}}_{5,4,3(\lambda)}:\,
 ad_{{X}_1} = \begin{pmatrix}
 {\lambda}&0&0&0\\0&{\lambda}&0&0\\0&0&1&0\\0&0&0&1 \end{pmatrix}; \,
 {\lambda} \in \mathbb{R}\setminus \lbrace 0, 1 \rbrace .$

        \item[4.4.]${\mathcal{G}}_{5,4,4(\lambda)}:\,
ad_{{X}_1} = \begin{pmatrix} {\lambda}&0&0&0\\0&1&0&0\\
0&0&1&0\\0&0&0&1 \end{pmatrix};\quad {\lambda} \in
\mathbb{R}\setminus \lbrace 0, 1 \rbrace.$

        \item[4.5.]${\mathcal{G}}_{5,4,5}:\,
ad_{{X}_1} = \begin{pmatrix} 1&0&0&0\\0&1&0&0\\
0&0&1&0\\0&0&0&1 \end{pmatrix}.$

        \item[4.6.]${\mathcal{G}}_{5,4,6({\lambda}_{1}, {\lambda}_{2})}:\,
ad_{{X}_1} = \begin{pmatrix} {{\lambda}_1}&0&0&0\\
0&{{\lambda}_2}&0&0\\0&0&1&1\\0&0&0&1\end{pmatrix};
{\lambda}_{1}, {\lambda}_{2} \in \mathbb{R}\setminus \lbrace 0, 1
\rbrace , {\lambda}_1 \neq {\lambda}_2.$

        \item[4.7.]${\mathcal{G}}_{5,4,7(\lambda)}:\,
ad_{{X}_1} = \begin{pmatrix}
{\lambda}&0&0&0\\0&{\lambda}&0&0\\0&0&1&1\\0&0&0&1 \end{pmatrix};
\quad {\lambda} \in \mathbb{R}\setminus \lbrace 0, 1 \rbrace .$

        \item[4.8.]${\mathcal{G}}_{5,4,8(\lambda)}:\,
ad_{{X}_1} = \begin{pmatrix}
{\lambda}&1&0&0\\0&{\lambda}&0&0\\0&0&1&1\\0&0&0&1 \end{pmatrix};
\quad {\lambda} \in \mathbb{R}\setminus \lbrace 0, 1 \rbrace .$

        \item[4.9.]${\mathcal{G}}_{5,4,9(\lambda)}:\,
ad_{{X}_1} = \begin{pmatrix}
{\lambda}&0&0&0\\0&1&1&0\\0&0&1&1\\0&0&0&1 \end{pmatrix}; \quad
{\lambda} \in \mathbb{R}\setminus \lbrace 0, 1\rbrace .$

        \item[4.10.]${\mathcal{G}}_{5,4,10}:\,
ad_{{X}_1} = \begin{pmatrix} 1&1&0&0\\0&1&1&0\\
0&0&1&1\\0&0&0&1 \end{pmatrix}.$

        \item[4.11.]${\mathcal{G}}_{5,4,11({\lambda}_{1}, {\lambda}_{2},\varphi)}:$
$$ad_{{X}_1} = \begin{pmatrix} \cos\varphi&-\sin\varphi&0&0\\
\sin\varphi& \cos\varphi&0&0\\0&0&{\lambda}_{1}&0\\0&0&0&{\lambda}_{2}\end{pmatrix};\, {\lambda}_{1}, {\lambda}_{2} \in \mathbb{R}\setminus \lbrace 0 \rbrace ,
{\lambda}_1 \neq {\lambda}_2,\varphi \in (0,\pi).$$

        \item[4.12.]${\mathcal{G}}_{5,4,12(\lambda, \varphi)}:$
$$ad_{{X}_1} = \begin{pmatrix} \cos\varphi&-\sin\varphi&0&0\\
\sin\varphi&\cos\varphi&0&0\\0&0&\lambda&0\\0&0&0&\lambda\end{pmatrix};\quad
\lambda \in \mathbb{R}\setminus \lbrace 0 \rbrace,\varphi \in (0,\pi).$$

        \item[4.13.]${\mathcal{G}}_{5,4,13(\lambda, \varphi)}:$
$$ad_{{X}_1} = \begin{pmatrix} \cos\varphi&-\sin\varphi&0&0\\
\sin\varphi&\cos\varphi&0&0\\0&0&\lambda&1\\0&0&0&\lambda\end{pmatrix};\quad
\lambda \in \mathbb{R}\setminus \lbrace 0 \rbrace,\varphi \in (0,\pi).$$

        \item[4.14.]${\mathcal{G}}_{5,4,14(\lambda, \mu, \varphi)}:$
$$ad_{{X}_1} = \begin{pmatrix} \cos\varphi&-\sin\varphi&0&0\\
\sin\varphi&\cos\varphi&0&0\\0&0&\lambda&-\mu\\0&0&\mu&\lambda\end{pmatrix};\quad
\lambda, \mu \in \mathbb{R}, \mu > 0, \varphi \in (0,\pi).$$
   \end{itemize}
\end{prop}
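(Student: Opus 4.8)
\emph{Proof proposal.} Throughout we work with the Kirillov forms $B_F$ of Proposition~\ref{prop2.4}, exploiting the near‑maximality of the derived ideal. Since $\G$ is solvable with $\dim\G^1=\dim\G-1$, the inclusions $\G^2\subsetneq\G^1\subsetneq\G$ are strict; fix $X_1\in\G\setminus\G^1$, so that $\G=\R X_1\oplus\G^1$ as vector spaces, and set $A:=\operatorname{ad}_{X_1}|_{\G^1}$, a derivation of $\G^1$ satisfying $A(\G^1)+\G^2=\G^1$. In the basis $\{X_1\}\cup(\text{basis of }\G^1)$ the matrix of $B_F$ has block form $\begin{pmatrix}0&-v^{T}\\ v&C_\phi\end{pmatrix}$, where $\phi:=F|_{\G^1}$, $C_\phi$ is the matrix of the Kirillov form of $\G^1$ at $\phi$ (it depends only on $\phi|_{\G^2}$, since brackets in $\G^1$ land in $\G^2$), and $v=v_\phi\in(\G^1)^{*}$ is the functional $V\mapsto\langle\phi,AV\rangle$; thus $B_F$ depends only on $\phi$. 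A direct computation with this $5\times5$ skew matrix gives $\operatorname{rank}B_F=\operatorname{rank}C_\phi$ if $v_\phi\in\operatorname{Im}C_\phi$ and $\operatorname{rank}B_F=\operatorname{rank}C_\phi+2$ otherwise, where $\operatorname{Im}C_\phi=(\ker C_\phi)^{\perp}$ because $C_\phi$ is skew. Choosing $\phi\ne0$ with $\phi|_{\G^2}=0$ forces $C_\phi=0$ and $v_\phi\ne0$ (since $A(\G^1)+\G^2=\G^1$), whence $\operatorname{rank}B_\phi=2$; so if $\G$ is $MD$ its maximal $K$-orbit dimension equals $2$ and $\operatorname{rank}B_F\le2$ for every $F$. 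Also $\G$ is indecomposable: in a decomposition $\G=\G_a\oplus\G_b$ with $1\le\dim\G_a\le2$ one summand would have $\dim\G_\bullet^1=\dim\G_\bullet$, impossible for a solvable algebra.

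The heart of the argument is to show $\G^2=0$. Proposition~\ref{prop2.10} at once excludes $\dim\G^2=3$. Suppose $\dim\G^2\in\{1,2\}$ and $\G$ is $MD$. If $\dim\G^2=1$, a short use of the Jacobi identity shows the structure $2$-form $\mathbf c\colon\Lambda^2\G^1\to\G^2\cong\R$ has rank exactly $2$, so $C_\phi$ is always a scalar multiple of a fixed rank‑$2$ matrix, with $\operatorname{Im}C_\phi$ equal to one fixed $2$-plane $\Pi$ whenever $\phi|_{\G^2}\ne0$; since $\dim\G^2=1$ we get $\operatorname{rank}A\ge3$, so $\operatorname{Im}A^{*}$ has dimension $\ge3$ and cannot lie inside $\Pi$. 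Then $\{\phi:v_\phi\in\Pi\}$ is a proper subspace of $(\G^1)^{*}$ while $\{\phi:\phi|_{\G^2}=0\}$ is a hyperplane, so some $\phi$ avoids both: for it $\operatorname{rank}C_\phi=2$ and $v_\phi\notin\operatorname{Im}C_\phi$, giving $\operatorname{rank}B_\phi=4$, a contradiction. If $\dim\G^2=2$, note first that $\operatorname{rank}C_\phi=4$ for some $\phi$ would already contradict $\operatorname{rank}B_F\le2$; so $\G^1$ has no $4$-dimensional $K$-orbit and $\operatorname{rank}C_\phi\le2$ throughout. A more delicate case analysis on $\operatorname{rank}A\in\{2,3,4\}$ then shows that in each branch one either produces a $\phi$ with $\operatorname{rank}C_\phi=2$ and $v_\phi\notin\operatorname{Im}C_\phi$ (contradiction again), or extracts a central ideal $\mathfrak z\subseteq Z(\G)$ and passes to $\G/\mathfrak z$. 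This quotient is again an $MD$-algebra, since the $K$-orbits of the associated group in $(\G/\mathfrak z)^{*}\subset\G^{*}$ are $K$-orbits of $G$ of the same dimension, and it is non-abelian of dimension $3$ or $4$; one then reaches a contradiction either directly from Proposition~\ref{prop2.10}, or — after using Proposition~\ref{prop3.2} to see $\G^1/\mathfrak z\cong\mathfrak h_3$, and hence $\G^1$ is the $4$-dimensional filiform nilpotent algebra — by an explicit evaluation of $B_F$. Ruling out that such a non-abelian $\G^1$ combines with $\operatorname{ad}_{X_1}$ so as to keep all $K$-orbit dimensions constant is the main obstacle of the proof.

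Once $\G^2=0$, i.e.\ $\G^1\cong\R^4$, we have $[\G,\G]=\operatorname{Im}(A)$, so $\dim\G^1=4$ forces $A=\operatorname{ad}_{X_1}|_{\G^1}\in GL_4(\R)$. Conversely, for any $A\in GL_4(\R)$ the algebra $\G=\R X_1\ltimes_A\R^4$ is an $MD(5,4)$-algebra: the block formula gives $C_\phi=0$ and $v_\phi=A^{*}\phi$, so $\operatorname{rank}B_F$ equals $2$ when $F|_{\G^1}\ne0$ and $0$ otherwise, and $\G$ is indecomposable as above. Finally, an isomorphism $\R X_1\ltimes_A\R^4\to\R X_1\ltimes_{A'}\R^4$ must restrict to some $P\in GL_4(\R)$ on the derived ideal and send $X_1$ to $cX_1'+w$ with $c\ne0$, $w\in\R^4$, forcing $A'=c^{-1}PAP^{-1}$. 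Hence the classification of $MD(5,4)$-algebras up to isomorphism is precisely the classification of conjugacy classes in $GL_4(\R)$ modulo nonzero scalars, equivalently of real Jordan canonical forms up to scaling. Enumerating these — normalizing one real eigenvalue, or the modulus of one conjugate pair, to $1$, and discarding redundancies — yields exactly the algebras $\G_{5,4,1},\dots,\G_{5,4,14}$; this last bookkeeping is routine but lengthy.
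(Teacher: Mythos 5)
Your architecture is the right one, and it is essentially the route the paper itself takes: Proposition \ref{prop3.4} is only cited from \cite{LS08}, but the paper's Lemmas \ref{lem4.13}--\ref{lem4.15} together with Theorems \ref{thm4.3} and \ref{thm4.5} reconstruct exactly your plan for $n=5$. Your block form of $B_F$, the rank dichotomy ${\rm rank}\,B_F\in\{{\rm rank}\,C_\phi,\,{\rm rank}\,C_\phi+2\}$, the conclusion that the maximal orbit dimension is $2$, the indecomposability argument, the exclusion of $\dim\G^2=3$ via Proposition \ref{prop2.10}, the complete treatment of $\dim\G^2=1$, and the reduction of the abelian case to proportional similarity in $GL_4(\R)$ are all correct.

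The genuine gap is the case $\dim\G^2=2$, which you yourself call ``the main obstacle of the proof'' and then do not resolve. Here $\G^1$ is forced to be the $4$-dimensional filiform nilpotent algebra ($[e_1,e_2]=e_3$, $[e_1,e_3]=e_4$), and one must show that no derivation $A$ of it with $A(\G^1)+\G^2=\G^1$ produces an $MD$-algebra. Your sketch (``a more delicate case analysis on ${\rm rank}\,A$ \dots either produces a bad $\phi$ or extracts a central ideal'') announces a plan but contains no argument, and the quotient branch still terminates in ``an explicit evaluation of $B_F$'' that is never performed. This is precisely the step the paper disposes of with Lemma \ref{lem4.14} and the Pfaffian criterion of Lemma \ref{lem4.15}: since ${\rm rank}\,B_F\leqslant 2$, every principal $4\times4$ skew submatrix of $B_F$ on rows $i,\,n-2,\,n-1,\,n$ is singular for all $F$, and the resulting polynomial identities in $F$ force $[X_i,X_{n-2}]=[X_i,X_{n-1}]=0$, so that $\G^2$ is spanned by the single bracket $[X_{n-2},X_{n-1}]$ and $\dim\G^2\leqslant1$, a contradiction. (Your own route can be made to close: imposing $v_\phi\in{\rm Im}\,C_\phi$ for all $\phi$ with $\phi|_{\G^2}\neq0$, together with the derivation identities for $A$ on the filiform algebra, forces the induced map of $A$ on $\G^1/\G^2$ to be singular, contradicting $A(\G^1)+\G^2=\G^1$ --- but this computation must actually appear.) A lesser omission: the final enumeration of proportional-similarity classes, i.e.\ real Jordan forms up to a nonzero scalar, is asserted rather than carried out, and it is exactly this bookkeeping that produces the stated parameter restrictions such as $\lambda_i\notin\{0,1\}$ in families 4.1--4.14.
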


In the next subsection, we introduce one noticeable result of D. Arnal, M. Cahen and J. Ludwig \cite{ACL95} in 1995.


\subsection{List of $MD$-algebras whose simply connected $MD$-groups have only coadjoint orbits of dimension zero or two}

In an attempt to classify solvable Lie algebras by structure, in 1995, D. Arnal, M. Cahen and J. Ludwig \cite{ACL95} have listed, up to a direct central factor, all Lie algebras (solvable or not) such that the maximal dimension of $K$-orbits of corresponding connected and simply connected Lie groups is just two. However, they have not yet classified, up to isomorphism, these algebras.   

\begin{prop}[{\bf see \cite{ACL95}}]\label{prop3.5}
	Let $G$ be a connected, simply connected solvable Lie group whose coadjoint orbits have dimension smaller or equal to two. Let $\G$ be the Lie algebra of $G$. Then, up to a direct central factor, $\G$ belongs to the following list of algebras:
\begin{itemize}
	\item[(i)]$\R.T \oplus\mathfrak{a}$ where $\mathfrak{a}$ is an abelian ideal and $ad_T \in End(\mathfrak{a}).$
	\item[(ii)] $\R.T \oplus \mathfrak{h}_3$ where $\mathfrak{h}_3$ is the 3-dimensional Heisenberg algebra spanned by $(X, Y, Z)$ with $[X, Y] = Z$ and
		\begin{itemize}
			\item[$\bullet$] either $[T, X] = X, \, [T, Y] = - Y, \, [T, Z] = 0$ (the 4-dimensional Diamond algebra).
			\item[$\bullet$] or $[T, X] = Y, \, [T, Y] = - X, \, [T, Z] = 0$.
		\end{itemize}
	\item[(iii)] $\G$ is 5-dimensional with basis $\left( X_1, X_2, X_3, Y_1, Y_2\right)$ and the multiplicative law reads
		\[ [X_1, X_2] = X_3, \, [X_1, X_3] = Y_1, \, [X_2, X_3] = Y_2.\]
	\item[(iv)]$\G$ is 6-dimensional with basis $\left( X_1, X_2, X_3, Y_1, Y_2, Y_3\right)$ and the nonvanishing brackets are
		\[ [X_1, X_2] = Y_3, \, [X_2, X_3] = Y_1, \, [X_3, X_1] = Y_2.\]
\end{itemize}
\end{prop}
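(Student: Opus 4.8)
The plan is to convert the hypothesis into a statement of linear algebra by means of the Kirillov form, apply the classical dichotomy for linear systems of decomposable $2$-forms, and then unwind the Lie-theoretic meaning in each case, discarding direct central factors as they appear. \textbf{Step 1 (linear reformulation).} By Proposition \ref{prop2.4}, the hypothesis ``$\dim\Omega_F\le 2$ for every $F\in\G^*$'' means precisely ``$\operatorname{rank}B_F\le 2$ for every $F\in\G^*$''. Let $W\subseteq\Lambda^2\G^*$ be the image of the linear map $F\mapsto B_F$ (the transpose of the bracket $c\colon\Lambda^2\G\to\G$); then $\dim W=\dim\G^1$, and the hypothesis says exactly that every $2$-form in $W$ has rank $\le 2$, i.e.\ is decomposable. \textbf{Step 2 (dichotomy).} For $\omega\in W$ one has $\omega\wedge\omega=0$, so expanding $(\omega+t\omega')\wedge(\omega+t\omega')=0$ yields $\omega\wedge\omega'=0$ for all $\omega,\omega'\in W$; by the classical description of linear systems of decomposable bivectors, either (A) there is $0\ne\xi\in\G^*$ with $W\subseteq\xi\wedge\G^*$, or (B) $W\subseteq\Lambda^2 P$ for some $3$-dimensional $P\subseteq\G^*$ (the case $\dim W\le 1$ being absorbed into (A)). In case (A) the hyperplane $\mathfrak h:=\ker\xi$ of $\G$ is abelian, since $B_F$ vanishes on $\mathfrak h\times\mathfrak h$ for every $F$; in case (B) the subspace of $\G$ annihilated by $P$ has dimension $\dim\G-3$ and lies in $Z(\G)$, so $\dim\G/Z(\G)\le 3$.

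\textbf{Step 3 (case (A) gives type (i)).} Fix $T\notin\mathfrak h$. If $ad_T(\mathfrak h)\subseteq\mathfrak h$, then $\mathfrak h$ is an abelian ideal and $\G=\R T\ltimes\mathfrak h$ is of type (i). If not, applying the Jacobi identity to $T$ and two elements of $\mathfrak h$ (using $[\mathfrak h,\mathfrak h]=0$) forces the component of $ad_T|_{\mathfrak h}$ transverse to $\mathfrak h$ to have rank one and pins $ad_T$ down; one obtains $\dim\G^1=1$ and, after adjusting the basis, $\G\cong{\rm aff}(\R)\oplus\mathfrak z_0$ with $\mathfrak z_0$ a central abelian ideal, which is again of type (i) up to a direct central factor.

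\textbf{Step 4 (case (B): the crux).} Writing $Z(\G)=\mathfrak z_0\oplus\bigl(Z(\G)\cap\G^1\bigr)$ and choosing a vector-space complement $V_0$ of $\mathfrak z_0$ that contains $\G^1$, one has $[\G,V_0]\subseteq\G^1\subseteq V_0$, so $V_0$ is an ideal and $\G=\mathfrak z_0\oplus V_0$; thus, up to a direct central factor, we may assume $Z(\G)\subseteq\G^1$. Then, using $\dim\G^1=\dim W\le\dim\Lambda^2 P=3$, we get $\dim\G=\dim Z(\G)+\dim\bigl(\G/Z(\G)\bigr)\le\dim\G^1+3\le 6$, and $\G$ is a central extension of the $3$-dimensional solvable Lie algebra $\mathfrak q:=\G/Z(\G)$. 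It remains to run through the finitely many possibilities for $\mathfrak q$ and for the defining $2$-cocycle, reducing each resulting algebra to normal form: for $\mathfrak q$ abelian one obtains either a semidirect product $\R T\ltimes(\text{abelian})$ of type (i) or, exactly when the cocycle $\Lambda^2\R^3\to\R^3$ is invertible, the free two-step nilpotent algebra on three generators, which is (iv); for $\mathfrak q\cong\mathfrak h_3$ one gets such a semidirect product or the $5$-dimensional algebra (iii); for $\mathfrak q$ non-nilpotent the extension is $4$-dimensional and is either of type (i) (abelian $\G^1$) or, precisely when $\G^2\ne 0$, one of the two $4$-dimensional algebras $\R T\oplus\mathfrak h_3$ of (ii). For the $4$-dimensional sub-cases one may instead simply invoke the classification of $MD(4,3)$-algebras recalled in Section \ref{Sect3}.

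The hard part is Step 4: the central-extension/cocycle bookkeeping for the handful of relevant $3$-dimensional solvable Lie algebras $\mathfrak q$, and in particular the verification that the algebras (ii), (iii), (iv) are genuinely not semidirect products of a line with an abelian ideal, so that they do not already appear inside type (i). Steps 1--3 are routine linear algebra and bracket manipulation; so is the (not strictly needed) converse that each algebra in the list has all coadjoint orbits of dimension $\le 2$, which is immediate for (i), was carried out in Section 2 for (ii), and is a one-line rank computation of $B_F$ for (iii) and (iv).
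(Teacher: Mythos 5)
First, a point of comparison that matters: the paper does not prove this proposition at all --- it is quoted verbatim from \cite{ACL95}, so there is no in-paper argument to measure your proposal against. Judged on its own, your outline is sound and is, in spirit, the natural (and essentially the original) route. Step 1 is correct: $F\mapsto B_F$ has kernel $(\G^1)^{\perp}$, so $\dim W=\dim\G^1$, and the orbit hypothesis is exactly that $W$ consists of decomposable $2$-forms. Step 2 correctly invokes the classical dichotomy for linear subspaces of the Pl\"ucker cone ($W\subseteq\xi\wedge\G^*$ or $W\subseteq\Lambda^2P$ with $\dim P=3$), and the translations --- an abelian hyperplane $\ker\xi$ in case (A), $\dim\bigl(\G/Z(\G)\bigr)\le 3$ in case (B) --- are right. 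Step 3 checks out: writing $[T,X]=\lambda(X)T+\phi(X)$ for $X\in\mathfrak h$, Jacobi gives $\lambda(X)\phi(Y)=\lambda(Y)\phi(X)$, so either $\lambda=0$ (type (i)) or $[T,Y]=\lambda(Y)(T+v)$ and $\G\cong{\rm aff}(\R)$ plus a central factor. The reduction in Step 4 to a central extension of $\mathfrak q=\G/Z(\G)$ with $Z(\G)\subseteq\G^1$ and $\dim\G\le 6$ is also correct.

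What remains genuinely unfinished --- as you acknowledge --- is the cocycle enumeration in Step 4, and this is where all the actual content of the list (ii)--(iv) lives: it is the Jacobi/cocycle identity $\omega([a,b],c)+\omega([b,c],a)+\omega([c,a],b)=0$ that kills every non-nilpotent $3$-dimensional $\mathfrak q=\R T\ltimes_A\R^2$ except those with ${\rm tr}\,A=0$, yielding exactly the two algebras of (ii), while $\mathfrak q$ abelian and $\mathfrak q\cong\mathfrak h_3$ yield (iv) and (iii) when the cocycle has full rank and type-(i) algebras otherwise. One detail in your sketch is not quite right: for $\mathfrak q$ non-nilpotent the extension need not be $4$-dimensional (e.g.\ $\mathfrak q\cong{\rm aff}(\R)\oplus\R$ admits the extension $[T,X]=X$, $[T,U]=Z$ of dimension $4$ with $2$-dimensional abelian-but-not-all-central complement, and one must also rule out a $2$-dimensional centre inside $\G^1$); these stray cases all land in type (i), so the conclusion survives, but the case analysis as written is slightly too quick. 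Your fallback of invoking the $MD(4,3)$ classification of Proposition \ref{prop3.2} for the $4$-dimensional sub-cases is legitimate and not circular, since that classification comes from an independent source. In short: correct strategy, correct endpoints, with the combinatorial core deferred rather than done.
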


\begin{rem}
Clearly we have two following remarks.
\begin{itemize}
	\item[(i)] There is an infinite family of non-isomorphic $MD$-algebras in part (i) of Proposition \ref{prop3.5}. Namely, the part (i) of Proposition \ref{prop3.5} includes all $MD(4,1)$-algebras, $MD(5,1)$-algebras, $MD(5,4)$-algebras except the 5-dimensional Heisenberg Lie algebra. Furthermore, the last two $MD(4,3)$-algebras in Proposition \ref{prop3.2} coincide with two algebras in the part (ii) of Proposition \ref{prop3.5}, but remaining four $MD(4,3)$-algebras in Proposition \ref{prop3.2} are included in the part (i) of Proposition \ref{prop3.1}. 
	\item[(ii)] However, it should be noted that the indecomposable $MD(2,1)$-algebras\, $\text{aff}(\R)$ and \,$\mathfrak{h}_3$; the decomposable $MD(4,1)$-algebras $\mathfrak{h}_3 \oplus \R.T$ (in Proposition \ref{prop3.1}), $\text{aff}(\R) \oplus \R.Z \oplus \R.T$ and the decomposable $MD(5,1)$-algebra $\text{aff}(\R) \oplus \R.X_1 \oplus \R.X_2 \oplus \R.X_3$ are not included in the list of Proposition \ref{prop3.5}, although it is obvious that all coadjoint orbits of the Lie groups corresponding to $\mathfrak{h}_3 \oplus \R.T, \,\text{aff}(\R) \oplus \R.Z \oplus \R.T, \, \text{aff}(\R) \oplus \R.X_1 \oplus \R.X_2 \oplus \R.X_3$ have dimension zero or two. So that was one shortcoming in Arnal's list.
\end{itemize} 
\end{rem}


\section{Classification of $MD(*, 1)$-class and $MD(*,* - 1)$-class}

Now we will introduce the complete classification, up to an isomorphism, of all $MD$-algebras (of arbitrary dimension) having the first derived ideal of dimension one or codimension one. These results are generalizations of Propositions \ref{prop3.1}, \ref{prop3.2}, \ref{prop3.3} and \ref{prop3.4} in Section \ref{Sect3}. 
 
\subsection{The main results}

	\begin{thm}[{\bf The Complete Classification of $MD(*,1)$-class}]\label{thm4.1}
		$MD(*,1)$-class coincides with the class of all real solvable Lie algebras whose the first derived ideal is 1-dimensional, moreover $MD(*,1)$ includes only the Lie algebra of the group of affine transformations of the real straight line, the real Heisenberg Lie algebras and their direct extensions by the real commutative Lie algebras. In other words, if $\G$ is a $n$-dimensional real solvable Lie algebra whose the first ideal $\G^1:= [\G, \G]$ is 1-dimensional $(2 \leqslant n \in \mathbb{N})$ then $\G$ is an $MD(n,1)$-algebra and $\G$ is isomorphic to one and only one of the following Lie algebras.
		\begin{itemize}
		\item[(i)] The Lie algebra ${\rm aff} (\R)$ of the group ${\rm Aff} (\R)$ of all affine transformations on $\R$; $n = 2$.
		\item[(ii)] ${\rm aff} (\R) \oplus \R^{n-2}$; $3 \leqslant n.$
		\item[(iii)] The real Heisenberg Lie algebra $\mathfrak{h}_{2m + 1}$;\, $3 \leqslant n = 2m + 1.$
		\item[(iv)] $\mathfrak{h}_{2m+1} \oplus \R^{n-2m-1}$; $3 \leqslant 2m + 1 < n.$
		\end{itemize}
	\end{thm}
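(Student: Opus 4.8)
The plan is to prove the two assertions of the theorem in turn: first that \emph{every} finite–dimensional real Lie algebra $\G$ with $\dim\G^1=1$ is automatically solvable and automatically an $MD$-algebra, and then that, up to isomorphism, such algebras are exhausted by the four families (i)--(iv) (each of which obviously has one-dimensional derived ideal, so the stated coincidence of classes follows). For the first assertion, write $\G^1=\R.Z$ with $Z\neq 0$. Since $\G^1$ is one-dimensional and hence abelian, $\G^2=[\G^1,\G^1]=0$, so $\G$ is two-step solvable and its simply connected Lie group is solvable. The crucial observation is that every bracket lands in $\R.Z$, so there is a \emph{fixed} skew-symmetric bilinear form $c$ on $\G$ with $[X,Y]=c(X,Y)\,Z$ for all $X,Y\in\G$. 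Consequently, for every $F\in\G^*$ the Kirillov form satisfies $B_F(X,Y)=\langle F,[X,Y]\rangle=\langle F,Z\rangle\,c(X,Y)$; that is, $B_F=\langle F,Z\rangle\cdot C$, where $C$ is the (constant) matrix of $c$ in a fixed basis. Hence $\mathrm{rank}\,B_F=0$ when $\langle F,Z\rangle=0$ and equals the fixed even number $r:=\mathrm{rank}\,C$ otherwise, so by Proposition~\ref{prop2.4}, $\dim\Omega_F=\mathrm{rank}\,B_F\in\{0,r\}$ with $r$ the maximal value — precisely the defining property of an $MD$-algebra. Thus $\G$ is an $MD(n,1)$-algebra.

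For the classification I would fix a vector-space complement $\G=V\oplus\R.Z$ and encode the bracket by the pair $(\omega,\phi)$, where $\omega:=c|_{V\times V}\in\Lambda^2V^*$ and $\phi\in V^*$ is defined by $[v,Z]=\phi(v)Z$. A direct check shows the Jacobi identities involving $Z$ hold automatically, so that the Jacobi identity is equivalent to the single condition $\phi\wedge\omega=0$ in $\Lambda^3V^*$. If $\phi=0$ then $Z$ is central; $\omega\neq 0$ (otherwise $\G$ is abelian and $\G^1=0$), and bringing $\omega$ to symplectic normal form yields $V=\mathrm{Span}(X_1,Y_1,\dots,X_m,Y_m)\oplus W$ with $\omega(X_i,Y_i)=1$, the other values zero, and $W=\ker\omega$; therefore $\G\cong\mathfrak{h}_{2m+1}\oplus W\cong\mathfrak{h}_{2m+1}\oplus\R^{\,n-2m-1}$, giving (iii) when $W=0$ and (iv) when $W\neq 0$. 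If $\phi\neq 0$, choose $X\in V$ with $\phi(X)=1$, so that $\mathrm{Span}(X,Z)\cong\mathrm{aff}(\R)$ via $[X,Z]=Z$; for $w\in\ker\phi$ put $w':=w-\omega(X,w)Z$, so that $[X,w']=[Z,w']=0$, while evaluating $\phi\wedge\omega=0$ on $(X,w_1,w_2)$ forces $\omega(w_1,w_2)=0$ and hence $[w_1',w_2']=0$. Thus $\{\,w':w\in\ker\phi\,\}$ spans a central abelian ideal complementary to $\mathrm{aff}(\R)$, so $\G\cong\mathrm{aff}(\R)\oplus\R^{\,n-2}$, giving (i) for $n=2$ and (ii) for $n\geqslant 3$. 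Finally, the four families are pairwise non-isomorphic: the $\mathrm{aff}(\R)$-type algebras are non-nilpotent whereas the Heisenberg-type algebras are nilpotent, and within each type the dimension $n$ — together with, in the Heisenberg case, the rank $2m=\mathrm{rank}\,c$ (equivalently the dimension $n-2m$ of the centre) — is an isomorphism invariant separating the members.

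As for where the difficulty lies: the argument is not deep, and the real engine is the identity $B_F=\langle F,Z\rangle\,C$, which trivializes the $MD$-verification. The steps requiring genuine care are (a) confirming that $\phi\wedge\omega=0$ captures the \emph{entire} content of the Jacobi identity (the mixed identities involving $Z$ must be checked to hold automatically), and (b) organizing the isomorphism invariants so that the list (i)--(iv) is both exhaustive and irredundant; I expect (b) to be the most delicate bookkeeping, although it remains routine.
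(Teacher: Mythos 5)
Your proposal is correct, and its skeleton matches the paper's: you split on whether $Z$ spans part of the centre (your $\phi=0$ versus $\phi\neq 0$ is exactly the paper's case split $\mathbf{a}=0$ versus $\mathbf{a}\neq 0$ in Lemmas \ref{lem4.7}--\ref{lem4.8}), you reduce the central case to the normal form of a skew-symmetric form to get $\mathfrak{h}_{2m+1}\oplus\R^{n-2m-1}$, and in the non-central case you build an abelian complement to a copy of ${\rm aff}(\R)$ by correcting each $w\in\ker\phi$ by a multiple of $Z$ — which is precisely the paper's two explicit changes of basis in Lemma \ref{lem4.7}. Two points where you genuinely streamline the argument are worth noting. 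First, the paper verifies the $MD$ property separately in each case (and in the ${\rm aff}(\R)$ case only after the isomorphism type has been pinned down), whereas your factorization $B_F=\langle F,Z\rangle\,C$ with $C$ a constant skew-symmetric matrix proves it uniformly in one line, before any classification; this works because \emph{all} brackets, including those with $Z$, land in $\R.Z$. Second, your packaging of the entire Jacobi identity as the single condition $\phi\wedge\omega=0$ (with the $Z$-identities automatic) replaces the paper's structure-constant computations and is what cleanly forces $\omega|_{\ker\phi\times\ker\phi}=0$ in the non-central case. Your checks are right: the mixed Jacobi identity $(u,v,Z)$ does cancel identically, and evaluating $\phi\wedge\omega$ on $(X,w_1,w_2)$ with $\phi(X)=1$, $w_i\in\ker\phi$ gives exactly $\omega(w_1,w_2)=0$. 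The irredundancy argument (nilpotent versus non-nilpotent, then dimension and centre dimension) is also sound. The one thing your write-up elides, and which the paper isolates as Lemma \ref{lem4.11}, is that isomorphism of two central-case algebras corresponds to congruence of the structure matrices up to a nonzero scalar, $cA=C^{T}BC$; you bypass this by classifying $\omega$ directly by its rank, which suffices here, but the congruence statement is what generalizes.
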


It is clear that Theorem \ref{thm4.1} can be formulated by another way in the following consequence which gives a new character of the real Heisenberg Lie algebras.

\begin{cor}[{\bf A New Character of the Real Heisenberg Lie Algebras}]
Let $\G$ be a real Lie algebra of dimension $n \, (3 \leqslant n \in \mathbb{N})$. Then the following conditions are equivalent.
\begin{itemize}
	\item[(i)] $\G$ is indecomposable and has the first derived ideal $\G^1 = [\G, \G] \cong \R$.
	\item[(ii)] $\G$ is an indecomposable $MD(n,1)$-algebra.
	\item[(iii)] $\G$ is the n-dimensional Heisenberg Lie algebra (in particular, $n$ is odd).
\end{itemize} 
\end{cor}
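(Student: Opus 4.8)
The plan is to derive the corollary directly from Theorem~\ref{thm4.1}, by proving the cycle of implications $(ii)\Rightarrow(i)\Rightarrow(iii)\Rightarrow(ii)$. The implication $(ii)\Rightarrow(i)$ requires nothing: by the very definition of an $MD(n,1)$-algebra its first derived ideal is $1$-dimensional, hence isomorphic to $\R$ (every $1$-dimensional Lie algebra being abelian), while indecomposability is part of the hypothesis in $(ii)$.

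For $(i)\Rightarrow(iii)$ I would first note that the hypothesis $\G^1\cong\R$ forces $\G$ to be solvable, so that Theorem~\ref{thm4.1} is applicable: indeed $\G^2=[\G^1,\G^1]=0$ because $\G^1$ is $1$-dimensional and therefore abelian, whence the derived series of $\G$ terminates. Thus $\G$ is an $n$-dimensional real solvable Lie algebra with $1$-dimensional first derived ideal, and by Theorem~\ref{thm4.1} it is isomorphic to one of the four algebras listed there. Since $n\geqslant 3$, case $(i)$ of that theorem (which has $n=2$, namely ${\rm aff}(\R)$) is excluded; cases $(ii)$ and $(iv)$ are visibly decomposable, as they carry a nontrivial abelian direct factor $\R^{n-2}$, respectively $\R^{n-2m-1}$, of dimension $\geqslant 1$. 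Hence the only possibility compatible with indecomposability is case $(iii)$, that is, $\G\cong\mathfrak{h}_{2m+1}$ with $n=2m+1$ odd. (In particular, if $n\geqslant 4$ is even, none of $(i)$, $(ii)$, $(iii)$ can hold, and the equivalence is satisfied vacuously.)

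For $(iii)\Rightarrow(ii)$, the Proposition of Section~2 asserting that $\mathfrak{h}_{2m+1}$ is an $MD(2m+1,1)$-algebra already supplies the $MD$ part, so it remains only to check that $\mathfrak{h}_{2m+1}$ is indecomposable. Suppose $\mathfrak{h}_{2m+1}=\mathfrak{a}\oplus\mathfrak{b}$ with $\mathfrak{a},\mathfrak{b}$ nonzero ideals. Then $[\mathfrak{a},\mathfrak{b}]\subseteq\mathfrak{a}\cap\mathfrak{b}=0$ and $\mathfrak{h}_{2m+1}^1=\mathfrak{a}^1\oplus\mathfrak{b}^1$; since $\mathfrak{h}_{2m+1}^1=\R.Z$ is $1$-dimensional we may assume $\mathfrak{b}^1=0$, i.e. $\mathfrak{b}$ is abelian, so $[\mathfrak{b},\mathfrak{h}_{2m+1}]=[\mathfrak{b},\mathfrak{a}]+[\mathfrak{b},\mathfrak{b}]=0$ and $\mathfrak{b}$ lies in the center of $\mathfrak{h}_{2m+1}$, which equals $\R.Z=\mathfrak{h}_{2m+1}^1=\mathfrak{a}^1\subseteq\mathfrak{a}$. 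Therefore $\mathfrak{b}\subseteq\mathfrak{a}\cap\mathfrak{b}=0$, a contradiction, so $\mathfrak{h}_{2m+1}$ is indecomposable and the cycle closes. I do not expect a genuine obstacle here: all the substance is contained in Theorem~\ref{thm4.1}, and the only items to verify by hand are the automatic solvability of $\G$ under $(i)$, the transparent decomposability of the families in cases $(ii)$ and $(iv)$ of that theorem, and the short center computation showing that $\mathfrak{h}_{2m+1}$ is indecomposable.
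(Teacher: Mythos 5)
Your proposal is correct and follows the paper's intended route: the corollary is presented there as an immediate reformulation of Theorem~\ref{thm4.1}, and you derive it from exactly that theorem by excluding the visibly decomposable cases (ii) and (iv). The only detail you verify by hand that the paper handles slightly differently is the indecomposability of $\mathfrak{h}_{2m+1}$, which the paper obtains from Lemma~\ref{lem4.9} together with the observation that the center of the Heisenberg algebra is $1$-dimensional; your direct center argument is an equivalent substitute.
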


The next theorem gives one necessary and sufficient condition to recognize one $MD(n, n-1)$-algebra $(4 \leqslant n \in \mathbb{N})$.

\begin{thm}[{\bf Necessary and Sufficient Conditions to identify $MD(*, *-1)$-alge\\bras}]\label{thm4.3}
	Let $\G$  be a real solvable Lie algebra of dimension $n \, (3 \leqslant n \in \mathbb{N})$ such that its first derived ideal $\G^1$ is $(n-1)$-dimensional.
	\begin{itemize}
		\item[(i)] If $\G^1$ is commutative then $\G$ is an $MD(n, n-1)$-algebra, moreover $\G$ is indecomposable.
		\item[(ii)] If $n > 4$ and $\G$ is an $MD(n, n-1)$-algebra then $\G^1$ is commutative.
	\end{itemize}
\end{thm}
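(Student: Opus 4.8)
The plan is to use the vector-space decomposition $\G=\R T\oplus\G^1$ (available since $\dim\G^1=n-1<n$), with $T\in\G\setminus\G^1$, and the induced endomorphism $A:=\operatorname{ad}_T|_{\G^1}$ of the ideal $\G^1$. For $F\in\G^*$ put $f:=F|_{\G^1}\in(\G^1)^*$. Because $[T,\G^1]\subseteq\G^1$ and $[\G^1,\G^1]\subseteq\G^1$, in a basis adapted to the splitting the Kirillov form $B_F$ is the Kirillov form $B_f^{\G^1}$ of $f$ on $\G^1$, bordered by the single row and column $v\mapsto f(Av)$ ($v\in\G^1$); by the rank formula for bordered skew-symmetric matrices together with Proposition~\ref{prop2.4}, $\dim\Omega_F=\operatorname{rank}B_f^{\G^1}$ if the vector $\ell:=\bigl(f(Ae_i)\bigr)_i$ lies in the row space of $B_f^{\G^1}$, and $\operatorname{rank}B_f^{\G^1}+2$ otherwise. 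In particular $\dim\Omega_F$ depends only on $f$, and $\dim\Omega_F=0$ precisely when $B_F=0$, i.e.\ when $f=0$; the set of such $F$ is the line $(\G^1)^{\perp}$. I would also record the elementary fact that a solvable Lie algebra with derived ideal of codimension one is indecomposable: a splitting $\G=\G_a\oplus\G_b$ into nonzero ideals would give solvable $\G_a,\G_b$ with $\dim\G_i^1\le\dim\G_i-1$, hence $\dim\G^1\le n-2$.

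\textbf{Part (i).} Assume $\G^1$ is commutative. Indecomposability is the fact just noted. Since $\G^1$ is abelian, $\G^1=[\G,\G]=[\R T,\G^1]+[\G^1,\G^1]=A(\G^1)$, so $A$ is invertible; moreover $B_f^{\G^1}=0$ for every $f$, so $\dim\Omega_F\in\{0,2\}$, with value $0$ iff $\ell=0$ iff $f\circ A=0$ iff $f=0$. Hence every coadjoint orbit has dimension $0$ or the maximal value $2$, so $\G$ is an $MD(n,n-1)$-algebra. This direction is routine.

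\textbf{Part (ii).} Assume $n>4$ and that $\G$ is an $MD(n,n-1)$-algebra, and suppose toward a contradiction that $\G^2:=[\G^1,\G^1]\ne0$. Note $\G^1$ is nilpotent (being the derived ideal of a solvable algebra) and $\G^2$ is commutative (Proposition~\ref{prop2.8}). \emph{Step 1.} The maximal orbit dimension equals $2$: since $\G^2\subsetneq\G^1$ there is $f_0\in(\G^1)^*$ with $f_0\ne0$ and $f_0(\G^2)=0$; then $B_{f_0}^{\G^1}=0$, so any $F_0$ extending $f_0$ has $0<\dim\Omega_{F_0}\le2$, and since $\G$ is an $MD$-algebra the common nonzero orbit dimension must be $2$. \emph{Step 2.} Consequently $\operatorname{rank}B_f^{\G^1}\le2$ for every $f$, and since $B_f^{\G^1}$ depends only on $f|_{\G^2}$ this says: for every $\varphi\in(\G^2)^*$ the alternating form $(u,v)\mapsto\varphi([u,v])$ on $\G^1$ has rank $\le2$, i.e.\ the transpose of the bracket map $\Lambda^2\G^1\to\G^2$ has image consisting of decomposable $2$-forms. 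By the classification of linear spaces of decomposable $2$-forms, $\G^1$ then either possesses a codimension-one abelian ideal or satisfies $\dim\bigl(\G^1/Z(\G^1)\bigr)\le3$; either way $\G^1$ lies in a short explicit list of nilpotent Lie algebras. (Equivalently one may invoke the Arnal--Cahen--Ludwig list of Proposition~\ref{prop3.5} --- $\G$ being solvable, indecomposable, with all orbits of dimension $\le2$ --- after checking that the algebras acknowledged as missing from that list are decomposable or have derived ideal of dimension $\le1$, hence are irrelevant here; the entries of that list with non-commutative $(n-1)$-dimensional derived ideal are exactly the two $4$-dimensional Diamond-type algebras.) \emph{Step 3.} For each admissible $\G^1$ one combines the identity $A(\G^1)+\G^2=\G^1$ (which is just $[\G,\G]=\G^1$) with the row-space condition of Step~1 --- $\ell$ must lie in the row space of $B_f^{\G^1}$ whenever $\operatorname{rank}B_f^{\G^1}=2$ --- applied to functionals $f$ that are nonzero but vanish on a hyperplane of $\G^2$: this forces $A$ to annihilate all of $\G^2$ and, beyond that, so large a part of $\G^1$ that $A(\G^1)+\G^2$ can no longer equal $\G^1$ as soon as $\dim\G^1=n-1\ge4$. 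This contradiction shows that $\G^1$ must be commutative.

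The main obstacle is Step~3 of Part~(ii): either establishing the ``$A$ annihilates too much'' conclusion uniformly, or organizing the finite case analysis over the non-commutative nilpotent algebras whose coadjoint orbits all have dimension $\le2$ (the Heisenberg-type algebras $\mathfrak{h}_3\oplus\R^j$, the filiform algebras $\mathfrak{n}_k$, and the few low-dimensional exceptional ones). A representative sub-case is $\G^1=\mathfrak{h}_3\oplus\R^j$ with $j\ge1$: here $\G^2=\R Z$, and imposing the row-space condition for all $f$ with $f(Z)\ne0$ forces $AZ=0$ and $A$ to kill the whole $\R^j$-summand, whence $A(\G^1)=A\bigl(\operatorname{span}(X,Y)\bigr)$ has dimension $\le2<2+j=\dim\G^1-\dim\G^2$, which is incompatible with $A(\G^1)+\G^2=\G^1$. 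This argument fails precisely for $j=0$, i.e.\ $n=4$ and $\G^1\cong\mathfrak{h}_3$, where a $2$-dimensional $A(\G^1)$ can still complement $\G^2$; this is why the hypothesis $n>4$ cannot be dropped, and why the $4$-dimensional real Diamond algebra $\G_{4,4,2}$ and the algebra $\G_{4,4,1}$ of Proposition~\ref{prop3.2} remain genuine $MD(4,3)$-algebras with non-commutative derived ideal.
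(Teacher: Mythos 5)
Your part (i) is correct and essentially the same bordered-matrix computation as the paper's; it even supplies the indecomposability argument (each nonzero solvable direct summand $\G_a$ satisfies $\dim\G_a^1\leqslant\dim\G_a-1$, so a splitting would force $\dim\G^1\leqslant n-2$) that the paper's proof of (i) leaves unaddressed. The genuine problem is part (ii): what you have written is a strategy with its decisive step left open. Step 3 is where the whole content of the statement sits, and you carry it out only for the family $\G^1\cong\mathfrak{h}_3\oplus\R^j$, explicitly deferring the remaining cases (filiform algebras, the five- and six-dimensional exceptional nilpotent algebras appearing in Proposition \ref{prop3.5}(iii)--(iv), and whatever else survives your Step 2 dichotomy). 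There is no a priori reason the ``$A$ annihilates too much'' mechanism works uniformly --- the dimension counts comparing $A(\G^1)+\G^2$ with $\G^1$ are different in each case --- so until that analysis is done the theorem is not proved. Step 2 itself also rests on an unproved quotation: the classification of linear subspaces of decomposable $2$-forms, and the resulting dichotomy (codimension-one abelian ideal in $\G^1$ versus $\dim\bigl(\G^1/Z(\G^1)\bigr)\leqslant 3$), is asserted rather than established; and the suggested substitute, Proposition \ref{prop3.5}, classifies the ambient group only up to a direct central factor, not the nilpotent subalgebra $\G^1$ whose forms $f\circ[\cdot,\cdot]$ must all have rank $\leqslant 2$, so invoking it needs an extra argument you do not give.

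For contrast, the paper avoids classifying $\G^1$ altogether. It first rules out $\dim\G^2=\dim\G^1-1$ by Proposition \ref{prop2.10}, so $\G^2={\rm Span}(X_1,\ldots,X_k)$ with $k\leqslant n-3$; then, since ${\rm rank}\,B_F\leqslant 2$ forces every $4\times 4$ principal minor of $B_F$ to vanish, the Pfaffian identity of Lemma \ref{lem4.15} applied to the submatrices indexed by $\{i,\,n-2,\,n-1,\,n\}$ (together with the invertibility statement of Lemma \ref{lem4.14}) annihilates the structure constants $c^l_{i,n-2}$ and $c^l_{i,n-1}$, whence $\G^2$ is spanned by the single bracket $[X_{n-2},X_{n-1}]$ and $n-3\leqslant 1$, contradicting $n>4$. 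If you want to rescue your route, the cleanest repair is to drop the classification of $\G^1$ in Step 2 and replace Step 3 by this uniform $4\times4$-Pfaffian constraint, which is exactly the missing mechanism.
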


\begin{rem}
	When $n \leqslant 4$, assertion (ii) is not true. Namely, if $n < 4$, all the $n$-dimensional Lie algebras are $MD$-algebras, and moreover, they can be listed easily. If $n = 4$, as previously indicated, the derived ideal of the 4-dimensional real Diamond Lie algebra is the 3-dimensional Heisenberg algebra which is non commutative and 1-codimensional. In fact, all $MD4$-algebras were completely classified in 1990 by A. V. Le \cites{Le90-2,Le93}, and the classification of $MD(4,1)$-class and $MD(4,3)$-class were recalled in Propositions \ref{prop3.1} and \ref{prop3.2}.  
\end{rem}

The last theorem will characterize every $MD(n, n-1)$-algebra by an invertible real $(n-1)$-square matrix and reduces the task of classifying $MD(n, n-1)$-class to obtaining the well-known classification of equivalent of proportional similar matrices. Let $\G$ be an $MD(n, n-1)$-algebra $(3 \leqslant n \in \mathbb{N})$ generated by a basis $(X_1,X_2,\ldots,X_n)$ such that the first derived ideal $\G^1$ is 1-codimensional and spanned by $(X_1, X_2, \ldots, X_{n-1})$. It is obviously that the Lie structure of $\G$ is well understood by the invertible real $(n-1)$-square matrix of map $ad_{X_n}$ considering as an automorphism of $\G^1$ for the basis $(X_1, X_2, \ldots, X_{n-1})$. 
 
\begin{thm}\label{thm4.5}
		Let $\G$ be a real vector space of dimension $n \, (3 \leqslant n \in \mathbb{N})$ generated by a basis $(X_1,X_2,\ldots, X_n)$ and $\G^1: = Span(X_1, X_2, \ldots, X_{n-1})$ is the 1-codimensional subspace of $\G$. Then we have the following assertions.
	\begin{itemize}
		\item[(i)] Each invertible real $(n-1)$-square matrix $A$ always defines one Lie structure on $\G$ such that $\G$ is an $MD(n, n-1)$-algebra with the first derived ideal is commutative, exactly equals to $\G^1$ and $A$ is exactly the matrix of adjoint map $ad_{X_n}$ on $\G^1$ in the chosen basis $(X_1, X_2, \ldots,X_{n-1})$.
		\item[(ii)] Two invertible real $(n-1)$-square matrices $A, B$ define two Lie structures on $\G$ which are isomorphic if and only if there exist a non-zero real number $c$ and an invertible real $(n-1)$-square matrix $C$ so that $cA = CBC^{-1}$.
	\end{itemize}
\end{thm}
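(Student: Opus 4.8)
The plan is to prove (i) by writing down the bracket explicitly and then invoking Theorem~\ref{thm4.3}(i) (or, for a self-contained argument, a one-line rank computation via Proposition~\ref{prop2.4}), and to prove (ii) by elementary linear algebra once one observes that the two first derived ideals coincide as subspaces of $\G$. For (i), given an invertible matrix $A = (a_{ki})_{1 \le k,i \le n-1}$, I would define a bracket on $\G$ by $[X_i, X_j] := 0$ for $1 \le i, j \le n-1$, by $[X_n, X_i] := \sum_{k=1}^{n-1} a_{ki} X_k$ for $1 \le i \le n-1$, and extend bilinearly and antisymmetrically. Writing $W := \mathrm{Span}(X_1, \ldots, X_{n-1})$, the Jacobi identity only has to be verified on a triple $(X_n, X_i, X_j)$ with $i, j \le n-1$, and it holds at once because $W$ is an abelian ideal: in each Jacobi summand the inner bracket already lands in $W$, so the outer bracket vanishes. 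Thus $\G$ becomes a Lie algebra, solvable because $W$ is abelian of codimension one. Since $A$ is invertible, $ad_{X_n}$ maps $W$ onto $W$, so $[\G, \G]$ is spanned by $\{[X_n, X_i] : i \le n-1\}$ and equals $W$; hence $\G^1 = W$ is $(n-1)$-dimensional and commutative, and the matrix of $ad_{X_n}|_{\G^1}$ in $(X_1, \ldots, X_{n-1})$ is exactly $A$. Theorem~\ref{thm4.3}(i) then gives that $\G$ is an $MD(n, n-1)$-algebra. (One can also see the $MD$ property directly: for $F = \sum_i f_i X_i^{*}$ the Kirillov form $B_F$ vanishes on $W \times W$, so in the basis $(X_1, \ldots, X_n)$ its only possibly nonzero entries sit in the last row and last column, namely the vector $v := A^{\top}(f_1, \ldots, f_{n-1})^{\top}$; therefore $\mathrm{rank}\, B_F = 0$ when $v = 0$ and $\mathrm{rank}\, B_F = 2$ when $v \ne 0$, and by Proposition~\ref{prop2.4} every coadjoint orbit of the associated simply connected group has dimension $0$ or $2$, with $2$ attained.)

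For the ``if'' part of (ii): assume $cA = CBC^{-1}$ with $c \in \R \setminus \{0\}$ and $C = (c_{ki}) \in GL_{n-1}(\R)$, and let $\G_A, \G_B$ be the two Lie structures on $\G$ from (i). Define $\varphi : \G_B \to \G_A$ by $\varphi(X_n) := c X_n$ and $\varphi(X_i) := \sum_{k=1}^{n-1} c_{ki} X_k$ for $i \le n-1$; this is a linear isomorphism. Both brackets vanish on $W \times W$, so it suffices to check $\varphi([X_n, X_i]_B) = [\varphi(X_n), \varphi(X_i)]_A$ for $i \le n-1$, which in the coordinates on $W$ associated to $(X_1,\ldots,X_{n-1})$ reads $CB e_i = c A C e_i$ for all $i$, i.e. $CB = cAC$ — precisely the hypothesis rewritten. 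Hence $\varphi$ is a Lie algebra isomorphism and $\G_A \cong \G_B$.

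For the ``only if'' part of (ii): let $\psi : \G_B \to \G_A$ be a Lie algebra isomorphism. It carries the first derived ideal of $\G_B$ onto that of $\G_A$, and by (i) both of these are the \emph{same} subspace $W \subseteq \G$, so $\psi(W) = W$. Let $C \in GL_{n-1}(\R)$ be the matrix of $\psi|_W$ in $(X_1, \ldots, X_{n-1})$, and write $\psi(X_n) = c X_n + w$ with $w \in W$; the induced isomorphism $\G/W \cong \R$ forces $c \ne 0$. Since $W$ is abelian in $\G_A$, for each $i \le n-1$ we have $[\psi(X_n), \psi(X_i)]_A = c\,[X_n, \psi(X_i)]_A$, and expanding $\psi([X_n, X_i]_B) = [\psi(X_n), \psi(X_i)]_A$ in coordinates on $W$ gives $CB e_i = c A C e_i$ for all $i$, i.e. $CB = cAC$, i.e. $cA = CBC^{-1}$, as required. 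I do not expect a genuine obstacle: the argument is essentially bookkeeping, and the only points needing care are the normalization of the scalar $c$ in the ``if'' direction (fixed by writing the relation as $cA = CBC^{-1}$ rather than its inverse, which pins down $\varphi(X_n) = cX_n$) and, in the ``only if'' direction, the structural input from (i) that the derived ideals of $\G_A$ and $\G_B$ literally coincide, which is exactly what makes a single conjugating matrix $C$ available.
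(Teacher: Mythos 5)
Your proposal is correct and follows essentially the same route as the paper: part (i) is proved by writing down the bracket $[X_n,X_i]=\sum_k a_{ki}X_k$ with $\G^1$ abelian and appealing to Theorem~\ref{thm4.3}(i) (the paper states this without your explicit Jacobi check or the alternative direct rank computation, both of which are fine), and part (ii) is the same block-matrix bookkeeping, using that an isomorphism must preserve the common derived ideal $\G^1$ to get the block form $\begin{bmatrix} C & * \\ 0 & c\end{bmatrix}$ and then reading off $CB=cAC$. No gaps.
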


\begin{rem} In view of Theorem \ref{thm4.5}, we have the following remarks.
  \begin{itemize}     
    	\item[(i)] Two invertible real square matrices $A, B$ of the same order are called \emph{proportional similar} if (and only if) there exist a non-zero real number $c$ and an invertible real square matrix $C$ of the same order as  of $A, B$ so that $cA = CBC^{-1}$. In fact, assertion (ii) of Theorem \ref{thm4.5} gives the classification of $MD(n,n-1)$-algebras ($n > 4$) by using the well-known classification of invertible real matrices in proportional similar equivalent relation.
    	\item[(ii)] The classification of indecomposable $MD(5, 4)$-algebras in Proposition \ref{prop3.4} of this paper gives one concrete illustration of Theorem \ref{thm4.5} when $n = 5$. On principle, it is not hard to list all non-isomorphic indecomposable $MD(n,n-1)$-algebras by applying Theorem \ref{thm4.5} for $n$ is small, for example $n = 6, 7, ... $ \,.
    \end{itemize}
\end{rem}

\subsection{Proof of Theorem \ref{thm4.1}}

	In this section, we always consider $\G$ as a real solvable Lie algebra of dimension $n \geqslant 3$ whose the derived ideal $\G^1 = [\G,\G]$ is 1-dimensional, in particular $\G^1$ is commutative. Without loss of generality, we can choose a suitable basis such that 
	$$\G = Span \left( X_1, X_2, \ldots, X_n \right),\G^1 = Span \left(X_n \right) = \R.X_n.$$
Let $G$ be the connected, simply connected Lie group corresponding to $\G$.

	When $n = 2$, it is obvious that the part (i) in Theorem \ref{thm4.1} holds because ${\rm aff} (\R)$ is an $MD(2,1)$-algebra (see Remark \ref{rem2.11}) and it is the unique non-commutative real Lie algebra of dimension 2. Therefore, in the rest of this subsection, we can suppose that $n \geqslant 3$. Denote
$$[X_i,X_n] = a_iX_n;\, [X_i,X_j] = a_{ij}X_n\, (a_i, a_{ij} \in \R);\, i,j = 1, 2, \ldots, n-1.$$
Evidently, the Lie structure on $\G$ is well understood by the vector 
$$\mathbf{a}: = (a_1, a_2, \ldots, a_{n-1})$$
and the skew-symmetric real $(n-1)$-square matrix $A: = (a_{ij})_{i,j=\overline{1,n-1}}$. There are two cases to consider for the values of the vector $\mathbf{a}$:\,$\mathbf{a} = 0$ or $\mathbf{a} \neq 0$.

\noindent{\bf The first case: a $\neq$ 0.} \, First, we consider the case $\mathbf{a} \neq 0$, i.e. $\exists i \in \{1, \ldots , n-1\}$ such that $a_i \neq 0$, that means $X_{n}$ is not in the center $Z(\G)$ of $\G$. Renumber the chosen basis, if necessary, we can always suppose that $a_{n-1} \neq 0$. Then $[X_{n-1}, X_n] = a_{n-1} X_n \neq 0$. In this case, we will show that $\G$ is a trivial extension of the Lie algebra ${\rm aff} (\R)$. Namely, we have the following lemma.

\begin{lem}\label{lem4.7}
	If $\mathbf{a} \neq 0$ then $\G$ is an $MD(n,1)$-algebra which is isomorphic to ${\rm aff} (\R) \oplus \R^{n-2}$ when $n \geqslant 3$.	
\end{lem}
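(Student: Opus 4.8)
The plan is to produce, through a short chain of elementary changes of basis, a basis of $\G$ in which the only non-trivial bracket is $[T,Y]=Y$; this will identify $\G$ with ${\rm aff}(\R)\oplus\R^{n-2}$, and the $MD$ property will then fall out of a direct inspection of the Kirillov forms.

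First I would normalize the ``${\rm aff}(\R)$-part''. Put $Y:=X_n$ and $T:=a_{n-1}^{-1}X_{n-1}$, so that $[T,Y]=Y$. Then, for each $i\in\{1,\dots,n-2\}$, replace $X_i$ by $\tilde X_i:=X_i-a_iT$; using $[T,Y]=Y$ one gets $[\tilde X_i,Y]=a_iY-a_iY=0$. Hence in the basis $(\tilde X_1,\dots,\tilde X_{n-2},T,Y)$ we have $[\tilde X_i,Y]=0$ for all $i\le n-2$ and $[T,Y]=Y$, while $[\tilde X_i,T]$ and $[\tilde X_i,\tilde X_j]$ still lie in $\R Y=\G^1$, say $[\tilde X_i,T]=e_iY$ and $[\tilde X_i,\tilde X_j]=d_{ij}Y$. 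The crucial point is that all the $d_{ij}$ vanish automatically: applying the Jacobi identity to the triple $(\tilde X_i,\tilde X_j,T)$ and using $[\tilde X_i,Y]=[\tilde X_j,Y]=0$, the two ``mixed'' summands drop out and one is left with $d_{ij}[Y,T]=0$, hence $d_{ij}=0$. Finally, replace each $\tilde X_i$ by $\hat X_i:=\tilde X_i+e_iY$; then $[\hat X_i,T]=e_iY+e_i[Y,T]=0$, while $[\hat X_i,Y]=0$ and $[\hat X_i,\hat X_j]=0$ are preserved because every correction term is a bracket against $Y$. In the basis $(\hat X_1,\dots,\hat X_{n-2},T,Y)$ the only non-zero bracket is thus $[T,Y]=Y$, so $\G=(\R T\oplus\R Y)\oplus\R^{n-2}\cong{\rm aff}(\R)\oplus\R^{n-2}$; in particular $\G$ is decomposable.

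For the $MD$ assertion I would argue directly from Proposition \ref{prop2.4}. Since $\G^1=\R X_n$ is one-dimensional, every bracket on $\G$ can be written $[U,V]=\beta(U,V)\,X_n$ for a single skew-symmetric bilinear form $\beta$ determined by the Lie structure; consequently, for each $F\in\G^*$ the Kirillov form is $B_F=\langle F,X_n\rangle\,\beta$, so that ${\rm rank}\,B_F$ equals ${\rm rank}\,\beta$ when $\langle F,X_n\rangle\neq0$ and equals $0$ otherwise. By Proposition \ref{prop2.4}, every positive-dimensional $K$-orbit of $G$ therefore has the same dimension ${\rm rank}\,\beta$, which is consequently the maximal one, so all $K$-orbits of $G$ have dimension $0$ or maximal; that is, $\G$ is an $MD(n,1)$-algebra. (In the good basis just constructed, $\beta$ is carried by a single nonzero $2\times2$ block in the $(T,Y)$ position, so the maximal orbit dimension equals $2$.)

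I do not expect a genuine obstacle; the only thing requiring attention is the bookkeeping across the three successive substitutions, namely checking at each stage that brackets already set to zero are not revived. This is automatic once one observes that every correction term is a multiple of $T$ or of $Y$: brackets against $Y$ vanish, and brackets against $T$ are governed entirely by $[T,Y]=Y$.
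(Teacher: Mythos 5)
Your proof is correct and follows essentially the same route as the paper: the same three successive changes of basis (normalize $[X_{n-1},X_n]=X_n$, subtract multiples of $X_{n-1}$ to kill the remaining brackets with $X_n$, apply the Jacobi identity to triples involving $X_{n-1}$ to annihilate the $[X_i,X_j]$, and finally absorb the $[X_i,X_{n-1}]$ terms by adding multiples of $X_n$). Your closing argument for the $MD$ property via $B_F=\langle F,X_n\rangle\,\beta$ is a little more explicit than the paper's one-line remark that the $K$-orbits have dimension $0$ or $2$, but it rests on the same observation.
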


\begin{proof}
	Using the following change of basis 
		\[
			Y_i = X_i-\frac{a_i}{a_{n-1}} X_{n-1},\, i = 1, 2, \ldots , n-2; \, Y_{n-1} = \frac{1}{a_{n-1}} X_{n-1}, \, Y_n = X_n
		\]
we get 
		\[
			[Y_i, Y_n] = 0; i = 1, 2, \ldots , n-2; \, [Y_{n-1}, Y_n] = Y_n.
		\]
Hence, without loss of generality, we can now assume 
		\[
			[X_i, X_n] = 0; \, i = 1, 2, \ldots , n-2; \, [X_{n-1}, X_n] = X_n.
		\]
Using the Jacobi identities for triples $(X_i, X_j, X_{n-1}), \,1 \leqslant i < j \leqslant n-2$, we get
		\[
			\begin{array}{l l}
				& [[X_i, X_j], X_{n-1}] + [[X_{n-1}, X_i], X_j] + [[X_j, X_{n-1}], X_i] = 0\\
				\Rightarrow & a_{ij}[X_n, X_{n-1}]  = 0 \Rightarrow -a_{ij}X_n = 0 \Rightarrow a_{ij} = 0 \\
				\Rightarrow & [X_i, X_j] = a_{ij}X_n = 0; \,\, i, j = 1, 2, \ldots , n - 2.
			\end{array}
		\]
Now, using the change of basis as follows 
		\[
			Z_i = X_i + a_{i,n-1}X_n; \, i = 1, 2, \ldots, n - 2; \, Z_{n-1} = X_{n-1}, \, Z_n = X_n
		\]
we get $[Z_i, Z_{n-1}] = 0; \, i = 1, 2, \ldots, n-2$. So we can suppose now that 
		\[
			[X_i, X_{n-1}] = 0, \, i = 1, 2, \ldots , n-2.
		\]
Hence, in this case, $\G$ is isomorphic to the following Lie algebra
		\[
			\begin{array}{c}
				{\rm aff} (\R) \oplus \R^{n-2}= Span \left(X_1, X_2, \ldots , X_n\right),\, [X_{n-1}, X_n] = X_n, 		
			\end{array}
		\]
where the other Lie brackets are trivial. Obviously, the $K$-orbits of $G$ is of dimension 0 or 2. This means that $\G$ is an $MD(n,1)$-algebra.
\end{proof}

\noindent{\bf The second case: a = 0.} \, Now, we consider the second case $\mathbf{a} = 0$, i.e. $[X_i, X_n] = 0$ for all $i = 1, 2, \ldots, n-1$, in particular $X_n \in Z(\G)$. Then the Lie structure of $\G$ is uniquely defined by the skew-symmetric real $(n-1)$-square matrix $A = (a_{ij})_{i,j=\overline{1,n-1}}$, which is called the \emph{structure matrix} of $\G$. Since $\G^1 = Span(X_n)$ is 1-dimensional, $A$ is non-trivial and $0 < {\rm rank} A$ is even. We have the following lemma.

\begin{lem}\label{lem4.8}
	If $\mathbf{a} = 0$, i.e. $[X_i, X_n] = 0$ for all $i = 1, 2, \ldots , {n-1}$, then the Lie algebra $\G$ is an $MD(n,1)$-algebra and the maximal dimension of the $K$-orbits of $G$ is the rank of the structure matrix $A$.
\end{lem}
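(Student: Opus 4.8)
The plan is to compute the Kirillov form $B_F$ for an arbitrary $F \in \G^*$ directly in the basis $(X_1, \ldots, X_n)$ and then invoke Proposition \ref{prop2.4}. Write $F = f_1 X_1^* + \cdots + f_n X_n^*$ in terms of the dual basis, and recall that here $[X_i, X_n] = 0$ for all $i$ while $[X_i, X_j] = a_{ij} X_n$ for $i, j \leqslant n-1$. Then $B_F(X_i, X_j) = \langle F, [X_i, X_j]\rangle = a_{ij} f_n$ for $i, j \leqslant n-1$, and $B_F(X_i, X_n) = 0$ for every $i$ because $X_n$ is central. Hence the matrix of $B_F$ in the basis $(X_1, \ldots, X_n)$ is the block matrix
\[
	B_F = \begin{pmatrix} f_n A & 0 \\ 0 & 0 \end{pmatrix},
\]
whose upper-left block is the $(n-1)$-square matrix $f_n A$ and whose last row and column vanish. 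In particular ${\rm rank}\, B_F = {\rm rank}\,(f_n A)$.

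Now distinguish two cases according to the value of $f_n = \langle F, X_n \rangle$. If $f_n = 0$, then $B_F = 0$, so Proposition \ref{prop2.4} gives $\dim \Omega_F = {\rm rank}\, B_F = 0$. If $f_n \neq 0$, then multiplication by the nonzero scalar $f_n$ does not change the rank of a matrix, so ${\rm rank}\, B_F = {\rm rank}\, A$ and $\dim \Omega_F = {\rm rank}\, A$. Since $A$ is a nontrivial skew-symmetric matrix, ${\rm rank}\, A$ is a positive even integer, and this value is attained (take any $F$ with $f_n \neq 0$). Therefore every $K$-orbit of $G$ has dimension either $0$ or ${\rm rank}\, A$, the latter being the maximal dimension; that is, $\G$ is an $MD(n,1)$-algebra and the maximal dimension of the $K$-orbits of $G$ equals ${\rm rank}\, A$. (That $\G$ is genuinely a Lie algebra is automatic: every triple iterated bracket vanishes because $\G^1 = \R.X_n \subseteq Z(\G)$, so the Jacobi identity holds trivially and $\G$ is in fact $2$-step nilpotent, hence solvable.)

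There is no real obstacle here. The one point that makes everything collapse is the observation that $X_n$ being central forces the last row and column of $B_F$ to vanish, so that $B_F$ is literally $f_n$ times the skew matrix $A$ padded with zeros; after that the lemma follows from the elementary fact that scaling a matrix by a nonzero real preserves its rank, together with Proposition \ref{prop2.4}.
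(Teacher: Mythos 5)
Your proof is correct and follows essentially the same route as the paper: both compute the Kirillov form as $B_F = f_n\begin{pmatrix} A & 0\\ 0 & 0\end{pmatrix}$ in the given basis and conclude via Proposition~\ref{prop2.4} that $\operatorname{rank} B_F$ is $0$ when $f_n=0$ and $\operatorname{rank} A$ when $f_n\neq 0$. Your closing remark about the Jacobi identity is a harmless extra, since $\G$ is already assumed to be a Lie algebra.
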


\begin{proof}
	Let $\G^*\equiv \R^n$  be the dual space of $\G$ with dual basis $\left(X_1^*, X_2^*, \ldots , X_n^*\right)$ and $F = f_1X_1^* + f_2X_2^* + \ldots  + f_nX_n^* \equiv (f_1, f_2, \ldots , f_n)$ be an arbitrary element of $\G^*$. The Kirillov form $B_F$ is given as follows
		\[
			B_F: = \left(\langle F, [X_i, X_j] \rangle\right)_{i,j = \overline{1,n}}
		 = f_n \begin{bmatrix}
		 			{{a_{11}}}& \ldots &{{a_{1,{n-1}}}}&0\\	
					\vdots & \ldots & \vdots & \vdots \\
					{{a_{{n-1},1}}}& \ldots &{{a_{{n-1},{n-1}}}}&0\\
					0& \ldots &0&0
				\end{bmatrix}
		= f_n \begin{bmatrix}
					A & 0 \\ 0 & 0
				\end{bmatrix}
		\]
and	${\rm rank} B_F \in \{0, 2k\}$ where $2k = {\rm rank} A$ is the rank of the structural matrix. More precisely
\begin{itemize}
	\item ${\rm rank} B_F = 0$ if and only if $f_n = 0$, i.e. $F = (f_1, f_2, \ldots, f_{n-1}, 0)$.
	\item ${\rm rank} B_F = {\rm rank} A = 2k > 0$ if and only if $f_n \neq 0$.
\end{itemize}
Hence, in view of Proposition \ref{prop2.4}, $\G$ is an $MD(n,1)$-algebra and the maximal dimension of $K$-orbits of $G$ is the rank of the structure matrix.
\end{proof}

Now, we will consider whether $\G$ is decomposable in the second case.

\begin{lem}\label{lem4.9}
	If $a = 0$, i.e. $[X_i, X_n] = 0$ for all $i = 1, 2, \ldots, n-1$, then the Lie algebra $\G$ is decomposable if and only if the dimension of the center of $\G$ is greater than 1.
\end{lem}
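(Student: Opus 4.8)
The plan is to reduce the whole statement to an explicit description of the center $Z(\G)$ in terms of the structure matrix $A = (a_{ij})_{i,j=\overline{1,n-1}}$, and then to treat the two implications separately.

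First I would compute $Z(\G)$ explicitly. Under the standing hypothesis $\mathbf{a} = 0$ the vector $X_n$ is central, so a general element $W = \sum_{i=1}^{n} c_i X_i$ lies in $Z(\G)$ if and only if $[X_j, W] = 0$ for $j = 1, \ldots, n-1$; writing $\mathbf{c} = (c_1, \ldots, c_{n-1})$ one computes $[X_j, W] = (A\mathbf{c})_j X_n$, so the condition is simply $A\mathbf{c} = 0$, with $c_n$ unconstrained. Hence $\dim Z(\G) = \big((n-1) - {\rm rank}\, A\big) + 1 = n - {\rm rank}\, A$. Since $A$ is nonzero (otherwise $\G$ would be abelian, contradicting $\G^1 = \R.X_n \neq 0$) we have $0 < {\rm rank}\, A \leqslant n-1$, so $\dim Z(\G) \geqslant 1$ always, and $\dim Z(\G) > 1$ precisely when ${\rm rank}\, A \leqslant n-2$, equivalently $\ker A \neq 0$.

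For the implication ``$\G$ decomposable $\Rightarrow \dim Z(\G) > 1$'', suppose $\G = \G_1 \oplus \G_2$ with $\G_1, \G_2$ nonzero ideals. Then $\G^1 = \G_1^1 \oplus \G_2^1$, and since $\dim \G^1 = 1$ one summand, say $\G_2$, satisfies $\G_2^1 = 0$; being an abelian direct ideal factor it commutes with all of $\G$, so $\G_2 \subseteq Z(\G)$. Moreover $X_n \in \G^1 = \G_1^1 \subseteq \G_1$, hence $\R.X_n \cap \G_2 \subseteq \G_1 \cap \G_2 = 0$; as $X_n$ is central, $\R.X_n \oplus \G_2 \subseteq Z(\G)$, giving $\dim Z(\G) \geqslant 1 + \dim \G_2 \geqslant 2$. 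For the converse, suppose $\dim Z(\G) > 1$, so $\ker A \neq 0$; pick $0 \neq \mathbf{c} = (c_1, \ldots, c_{n-1}) \in \ker A$ and set $W := \sum_{i=1}^{n-1} c_i X_i$, a nonzero central element not proportional to $X_n$. After reordering $X_1, \ldots, X_{n-1}$ (which only conjugates $A$ by a permutation and preserves $\mathbf{a}=0$) we may assume $c_{n-1} \neq 0$, and replacing $X_{n-1}$ by $\tfrac{1}{c_{n-1}} W$ yields a new basis in which $X_{n-1}$ is central. Then $[X_i, X_{n-1}] = 0$ for all $i$, so the last row and column of the structure matrix vanish; consequently $Span(X_1, \ldots, X_{n-2}, X_n)$ is a subalgebra (all its brackets land in $\R.X_n$), it is an ideal because it also commutes with $X_{n-1}$, and $\R.X_{n-1}$ is a central ideal, so $\G = Span(X_1, \ldots, X_{n-2}, X_n) \oplus \R.X_{n-1}$ is a genuine decomposition into nonzero ideals, i.e. $\G$ is decomposable.

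The only delicate point — the one I would be most careful about — is in the converse: a nonzero vector of $\ker A$ must be promoted to a basis vector of $\G$ without destroying the normal form $\mathbf{a} = 0$, and then one must check that the complementary coordinate span is not merely a vector-space complement but an honest Lie-algebra direct summand (a subalgebra commuting with the central line). Everything else follows mechanically from the formula $\dim Z(\G) = n - {\rm rank}\, A$ together with the observation that $\dim \G^1 = 1$ forces any direct decomposition of $\G$ to contain an abelian, hence central, factor.
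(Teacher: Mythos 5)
Your proof is correct, and it diverges from the paper's in an interesting way on the forward implication. The paper writes $X_n = X_a + X_b$ along the decomposition $\G = \A \oplus \B$, uses centrality of $X_n$ to force both components $X_a, X_b$ into $Z(\G)$, and then splits into two cases according to whether one component vanishes; you instead observe that $\G^1 = \G_1^1 \oplus \G_2^1$ for a direct sum of ideals, so $\dim \G^1 = 1$ forces one whole factor to be abelian and hence central, which kills the case analysis in one stroke and makes transparent \emph{why} a direct factor must contribute to the center. On the converse both arguments are essentially the same --- pick a central element independent of $X_n$ and split off the line it spans --- except that the paper does this abstractly (extend $(X, X_n)$ to a basis and check that $Span(X_n, T_1, \ldots, T_{n-2})$ is an ideal complementing $\R.X$), while you route it through the explicit identification $Z(\G) = \ker A \oplus \R.X_n$ and a basis change normalizing a kernel vector; that extra computation buys you the formula $\dim Z(\G) = n - {\rm rank}\, A$, which dovetails nicely with Lemma \ref{lem4.8} (the maximal orbit dimension is ${\rm rank}\, A$) and is used implicitly in the paper's proof of Theorem \ref{thm4.1} when it counts zero eigenvalues of the canonical form, but it is not needed for the lemma itself. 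The one point you rightly flagged --- that the vector-space complement of the central line must actually be a subalgebra and an ideal --- is handled correctly in your argument since all brackets of $X_1, \ldots, X_{n-2}$ land in $\R.X_n$, which lies inside the proposed complement.
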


\begin{proof}
	Denote $Z(\G)$ to be the center of $\G$. Obviously, $X_n$ is in $Z(\G)$ because of $[X_i, X_n] = 0$ for all $i = 1, 2, \ldots, {n-1}$, i.e. $\dim Z(\G) > 0$.
	
\noindent $(\Longrightarrow)$ Suppose that $\G$ is decomposable, i.e. $\G = \A \oplus \B$ in which $\A, \B$ are non-trivial proper Lie subalgebras of $\G$. Put $X_n = X_a + X_b \in Z(\G)$ with some $X_a \in \A$ and some $X_b \in \B$. Let us consider an arbitrary element $Y = Y_a + Y_b \in \G$ with $Y_a \in \A,\, Y_b \in \B$. We have
	\[
		\begin{array}{l}
			0 = [X_n, Y] = [X_a + X_b, Y_a + Y_b] = [X_a, Y_a] + [X_b, Y_b]\\
			\Rightarrow [X_a, Y_a] = [X_b, Y_b] = 0 \Rightarrow X_a, X_b \in Z(\G).
		\end{array}
	\]
\begin{itemize}
	\item If $X_a \neq 0 \neq X_b$ then they are of course linear independent and $\dim Z(\G) > 1$.
	\item If $X_a$ or $X_b$ is 0. Without loss of generality, we can suppose that $X_a = 0$, i.e. $X_n = X_b \in \B$. In particular $\G^1 = Span(X_n) \subseteq \B$. Let $X \neq 0 \in \A$ be an arbitrary element. Obviously, $[X, Z] = 0$ for every $Z \in \B$. On the other hand, we have
	\[
		[X, T] \in \A \cap \G^1 \subseteq \A \cap \B = 0 \Rightarrow [X, T] = 0, \forall T \in \A.
	\]
This means that $X$ commutes with any element of $\G = \A \oplus \B$, i.e. $X \in Z(\G)$. Because of $X \in \A, X_n \in \B$ so $X, X_n$ are linear independent and $\dim Z(\G) > 1$.
\end{itemize}
Hence, $\dim Z(\G) > 1$ in any case.

\noindent $(\Longleftarrow)$ Suppose $\dim Z(\G) > 1$. There exists $X \in Z(\G)$ such that $X, X_n$ are independent. We can add $T_1, \ldots, T_{n-2}$ in $(X, X_n)$ to get a new basis of $\G$. Then we have 
	\[
		\G = Span(X) \oplus Span(X_n, T_1, \ldots , T_{n-2}).
	\]
Therefore $\G$ is decomposable.
\end{proof}

\begin{rem}
	The center of the Heisenberg Lie algebra is 1-dimensional, so its indecomposableness is unsurprised.
\end{rem}

Recall that each $MD(n,1)$-algebra $\G$ in the second case is always defined uniquely by an $(n-1)$-square (skew-symmetric) structure matrix $A$. Now we will consider whether two structure $A$ and $B$ give us isomorphic Lie algebras.

\begin{lem}\label{lem4.11}
	Let $A = (a_{ij})_{i,j=\overline{1,n-1}},  B = (b_{ij})_{i,j=\overline{1,n-1}}$ be skew-symmetric real $(n-1)$-square matrices and $\G_A, \G_B$ be $MD(n,1)$-algebras which are defined by $A, B$ respectively. Then
	\[
		(\G_A \cong \G_B) \Leftrightarrow (\exists c\in \R^*, \exists C \in GL_{n-1}(\R) \,\, \text{such that} \,\, cA = C^TBC),
	\]
where $C^T$ is the transpose of $C$.
\end{lem}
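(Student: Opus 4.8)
The plan is to prove both implications by explicit changes of basis, exploiting that the first derived ideal of an $MD(n,1)$-algebra is one-dimensional and therefore preserved by every Lie algebra isomorphism. Throughout I denote by $(X_1,\dots,X_n)$ and $(Y_1,\dots,Y_n)$ the distinguished bases realizing $\G_A$ and $\G_B$ as in the second case treated above, so that $[X_i,X_j]=a_{ij}X_n$ and $[X_i,X_n]=0$, and similarly for $\G_B$ with the matrix $B$; in particular $X_n$ and $Y_n$ are central.

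For the direction $(\Leftarrow)$, given $cA=C^{T}BC$ with $c\in\R^{*}$ and $C=(c_{kj})\in GL_{n-1}(\R)$, I would define $\varphi\colon\G_A\to\G_B$ by $\varphi(X_j)=\sum_{k=1}^{n-1}c_{kj}Y_k$ for $j\leqslant n-1$ and $\varphi(X_n)=cY_n$. Invertibility of $C$ together with $c\neq 0$ makes $\varphi$ a linear isomorphism, so the only thing to verify is that $\varphi$ preserves brackets: centrality of $Y_n$ kills $[\varphi(X_i),\varphi(X_n)]=0=\varphi([X_i,X_n])$, while $[\varphi(X_i),\varphi(X_j)]=\sum_{k,l}c_{ki}c_{lj}b_{kl}\,Y_n=(C^{T}BC)_{ij}\,Y_n=c\,a_{ij}\,Y_n=\varphi([X_i,X_j])$.

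For the direction $(\Rightarrow)$, starting from a Lie algebra isomorphism $\psi\colon\G_A\to\G_B$, the first step is to note that $\psi(\G_A^{1})=\G_B^{1}$, hence $\psi(X_n)=cY_n$ for some $c\in\R^{*}$; this $c$ will be the scalar in the statement. Next I write $\psi(X_j)=\sum_{k=1}^{n-1}c_{kj}Y_k+d_jY_n$ for $j\leqslant n-1$ and set $C:=(c_{kj})$. Reducing modulo the central line $\R.Y_n$, the classes of $\psi(X_1),\dots,\psi(X_{n-1})$ must form a basis of $\G_B/\R.Y_n$ (because $\psi$ is bijective and $\psi(X_n)$ already spans $\R.Y_n$), so $C\in GL_{n-1}(\R)$. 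Finally, since the summands $d_jY_n$ are central they drop out of every bracket, so comparing $\psi([X_i,X_j])=c\,a_{ij}Y_n$ with $[\psi(X_i),\psi(X_j)]=(C^{T}BC)_{ij}Y_n$ gives $cA=C^{T}BC$.

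The computations are entirely routine; the two points that deserve a word of care are the use of the fact that any Lie algebra isomorphism maps the first derived ideal onto the first derived ideal (which both forces $\psi(X_n)\in\R.Y_n$ and produces the scalar $c$), and the verification that the matrix $C$ extracted from $\psi$ is invertible, which is handled by passing to the quotient by the central line $\R.Y_n$. I do not expect a genuine obstacle beyond these bookkeeping points.
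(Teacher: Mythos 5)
Your proof is correct and follows essentially the same route as the paper's: both directions are handled by writing the isomorphism in block-triangular form with respect to the distinguished bases (using that $\G^1=\R X_n$ is preserved) and comparing brackets to obtain $cA=C^{T}BC$. The only cosmetic difference is that you justify the invertibility of $C$ by passing to the quotient $\G_B/\R Y_n$, while the paper reads it off from the determinant of the block-triangular matrix of the isomorphism; these are equivalent.
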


\begin{proof}
	$(\Longrightarrow)$ Let $f: \G_A \to \G_B$ be an isomorphism. Since $f \left(\G_A^1\right)=\G_B^1$, there is a non-zero real number $c$ so that $f(X_n) = cX_n$. Clearly the matrix of $f$ in the basis $(X_1, X_2, \ldots , X_{n-1}, X_n)$ is given as follows
		\[M = \left[ {\begin{array}{*{20}{c}}
		{{c_{11}}}& \cdots &{{c_{1,{n-1}}}}&0\\
 		\vdots & \cdots & \vdots & \vdots \\
		{{c_{{n-1},1}}}& \cdots &{{c_{{n-1},{n-1}}}}&0\\
		{{c_{n1}}}& \cdots &{{c_{n,{n-1}}}}&c
		\end{array}} \right] = \left[ {\begin{array}{*{20}{c}}
		C&0\\
		*&c
		\end{array}} \right]\]
in which $C = (c_{ij})_{i,j=\overline{1,n-1}}$ is a real $(n-1)$-square matrix, and $*$ is the vector $(c_{n1}, \ldots , c_{n, {n-1}})$. Because $f$ is an isomorphism, $M$ is invertible and so is $C$. Hence, the linear map $f$ is a Lie isomorphism if and only if
$$\begin{array}{*{20}{l}} & f([X_i, X_j]_A ) = [f(X_i), f(X_j)]_B;  \quad \forall\, i, j = 1, 2, \ldots , {n-1} \\
	    \Leftrightarrow  & f(a_{ij}X_n) = \left[\sum\limits_{k=1}^{n-1}c_{ki}X_k + c_{ni}X_n,\sum\limits_{l = 1}^{{n-1}}
	                   {{c_{lj}}{X_l}}   + c_{nj}X_n \right]_B; \quad \forall\,  i, j = 1, 2, \ldots , {n-1} \\
        \Leftrightarrow & ca_{ij}X_n  =\sum\limits_{k = 1}^{{n-1}} {{c_{ki}}} .\sum\limits_{l = 1}^{{n-1}} {{c_{lj}}
                       }[X_k, X_l]_B; \quad \forall\,  i, j = = 1, 2, \ldots , {n-1} \\
        \Leftrightarrow & ca_{ij}X_n = \left(\sum\limits_{k,l = 1}^{{n-1}} {c_{ki}} .{b_{kl}}.{c_{lj}}
                       \right)X_n; \quad \forall\,  i,j = 1, 2, \ldots , {n-1} \\    
        \Leftrightarrow  & ca_{ij} = \sum\limits_{k,l = 1}^{{n-1}} {c_{ki}}.{b_{kl}}.{c_{lj}}; 
                        \quad \forall \, i, j = 1, 2, \ldots, {n-1} \\
        \Leftrightarrow  & cA = C^TAC. \end{array}$$    
                                                                  
\noindent$(\Longleftarrow)$ Conversely, suppose that there exist a non-zero real number $c$ and an invertible real $(n-1)$-square matrix $C =(c_{ij})_{i,j=\overline{1,n-1}}$ such that $cA = C^TBC$. Let $f: \G_A \to \G_B$ be a linear map which is defined, in the basis $(X_1, X_2, \ldots, X_n)$, by the matrix $M'$ as follows
	\[
		M' = \begin{bmatrix}
					c_{11} & \cdots & c_{1,n-1} & 0\\
					\vdots & \cdots & \vdots & \vdots \\
					c_{n-1,1} & \cdots & c_{n-1,n-1} & 0\\
					0 & \cdots & 0 & c
				\end{bmatrix}
			= \begin{bmatrix} C & 0 \\ 0 & c \end{bmatrix}.
	\]
Since $C$ is invertible and $c \neq 0$, $f$ is a linear isomorphism. Moreover, it is easy to check that $f$ is also a Lie homomorphism. Therefore, $f$ is a Lie isomorphism.
\end{proof}

\begin{rem}
	Recall that two real $(n-1)$-square matrices $A, B$ are said to be \emph{congruent} if there exists an invertible $(n-1)$-square matrix $C$ such that $B = C^TAC$. Furthermore, any non-zero skew-symmetric real square matrix can be always transformed into the canonical form. More precisely, for any non-zero skew-symmetric real $(n-1)$-square matrix $A$, there exists a real orthogonal matrix $C$ such that
	$$C^TAC = {\rm diag} (\Lambda_1, \Lambda_2, \ldots , \Lambda_m, 0, \ldots, 0)$$
where $\Lambda_j: =  \begin{bmatrix} 0 & \lambda _i \\  -\lambda _i & 0 \end{bmatrix}$ and $\lbrace \pm i\lambda_1, \ldots , \pm i\lambda_m \rbrace$ ($i$ is the imaginary unit in the complex field $\mathbb{C}$) is the set of all multiple eigenvalues of $A$. For example, the real Heisenberg Lie algebra
	$$\mathfrak{h}_{2m + 1}: = \langle X_i, Y_i, Z: i = 1, 2, \ldots , m \rangle; [X_i, Y_i] = Z, i = 1, 2, \ldots , m$$
has the structure matrix $H = {\rm diag} (I, \ldots, I)$ including $n$ blocks $I = \begin{bmatrix} 0 & 1\\ -1 & 0 \end{bmatrix}$. This matrix $H$ has exactly two $m$-multiple eingenvalues $\pm i$ and $H$ has no eigenvalue 0.
\end{rem}


\begin{proof}[{\bf Proof of Theorem \ref{thm4.1}}]
	Recall that we need only show the part (ii), (iii), (iv) of Theorem \ref{thm4.1}. Lemmas \ref{lem4.7}, \ref{lem4.8} and \ref{lem4.9} show that the considered Lie algebra $\G$ belongs to $MD(n,1)$-class. Moreover, the part (ii) of Theorem \ref{thm4.1} is implied directly from Lemma \ref{lem4.7}. We only need to prove the part (iii) and (iv).
 
	In the basis $(X_1, X_2, \ldots , X_n)$, the structure matrix of $\G$ is $A$. We will choose a new basis to get the standard form $B = C^TAC$. By Lemma \ref{lem4.11}, the Lie algebra defined by the matrix $B$ is isomorphic to $\G$.

	If $B$ has no zero eigenvalue, i.e. $B = {\rm diag} (\Lambda_1, \Lambda_2, \ldots , \Lambda_m),\, 2m = {n-1}$. Put
	\[
		D: = {\rm diag} \left( 1, \frac{1}{{{\lambda _1}}}, 1, \frac{1}{\lambda_2} , \ldots , 1, \frac{1}{\lambda_m} \right).
	\]
Then we get the structure matrix $H = D^TBD$ of the $(2m + 1)$-dimensional real Heisenberg algebra $\mathfrak{h}_{2m + 1},\, 2m + 1 = n$. By Lemma \ref{lem4.11}, $\G$ is isomorphic to $\mathfrak{h}_{2m + 1}$. So the part (iii) is proved.
	
	If $B$ has eigenvalues 0, i.e. $0 < 2m < {n-1}$, then $Z(\G)$ is generated by the basis $X_{2m + 1}, \ldots , X_n$ whose dimension is greater than 1. By Lemma \ref{lem4.9}, $\G$ is decomposable, namely $\G$ is isomorphic to $\mathfrak{h}_{2m + 1} \oplus \R^k$ where $k = n-(2m + 1) > 0$. Actually, the direct summand $\R^k$ is the commutative Lie subalgebra of $\G$ generated by $(X_{2m + 2}, \ldots , X_n)$ and $\mathfrak{h}_{2m + 1}$ is generated by $(X_1, X_2, \ldots, X_{2m}; X_{2m + 1})$. So the part (iv) is proved and the proof of Theorem \ref{thm4.1} is complete.
\end{proof}
 

\subsection{Proof of Theorem \ref{thm4.3}}

In this section, we always consider $\G$ as a real solvable Lie algebra of dimension $n \geqslant 3$ with the 1-codimensional first derived ideal $\G^1 = [\G,\G]$. Assume that $\dim \G^2 = \dim([\G^1, \G^1]) = k \leqslant n-2$. Without loss of generality, we can choose a suitable basis such that 
$$\canbangg{\G\,\,\,= Span \left(X_1, X_2, \ldots , X_n \right), \\
\G^1 = [\G,\G] = Span \left(X_1, X_2, \ldots , X_{n-1} \right), \\
\G^2 = \left[\G^1,\G^1\right] = Span \left(X_1, \ldots , X_k \right), \, k \leqslant  n-2.}$$

Let $c_{ij}^l \left({1 \leqslant i < j \leqslant n}, 1 \leqslant l \leqslant n \right)$ be the structure constants of $\G$. Then the Lie brackets of $\G$ are given by
	$$\left[X_i, X_j \right] = \sum\limits_{l = 1}^{n-1} c_{ij}^l{X_l}; \, 1 \leqslant i < j \leqslant n.$$
In view of the Proposition \ref{prop2.8}, if $\G$ is an $MD$-algebra then $\G^2$ is commutative and we get
$$\left[X_i, X_j \right] = \sum\limits_{l = 1}^{n-1} c_{ij}^l{X_l} = 0 \Leftrightarrow
c_{ij}^l = 0; \, 1 \le i < j \leqslant k, \, 1 \leqslant l \leqslant n-1.$$ 
In order to prove Theorem \ref{thm4.3}, we need some lemmas.

\begin{lem}\label{lem4.13}
	If $\G$ is an $MDn$-algebra ($n \geqslant 3$) whose the first derived ideal $\G^1$ is commutative and 1-codimensional then $\dim \Omega_F \in \{ 0, 2 \}$, for every $F \in \G^*$.
\end{lem}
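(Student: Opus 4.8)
The plan is to compute the Kirillov form $B_F$ explicitly in the chosen basis and read off its rank. Recall the hypotheses give $\G = Span(X_1,\dots,X_n)$ with $\G^1 = Span(X_1,\dots,X_{n-1})$ commutative; hence $[X_i,X_j] = 0$ for all $1 \leqslant i < j \leqslant n-1$, so the only possibly nonzero brackets are $[X_i,X_n]$ for $i = 1,\dots,n-1$, and each of these lies in the ideal $\G^1$. Writing $A := ad_{X_n}|_{\G^1}$, one has ${\rm im}\,A = [\G,\G] = \G^1$, so $A$ is invertible; this fact is not strictly needed for the present lemma but will be reused in the proof of Theorem \ref{thm4.3}.

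Next I would fix an arbitrary $F \in \G^*$ and write the matrix of $B_F = (\langle F,[X_i,X_j]\rangle)_{i,j=\overline{1,n}}$ in the basis $(X_1,\dots,X_n)$. By the observation above, every entry with $i,j \leqslant n-1$ vanishes and so does the $(n,n)$ entry, leaving
\[
B_F = \begin{bmatrix} 0 & v \\ -v^{T} & 0 \end{bmatrix}, \qquad v = (v_1,\dots,v_{n-1})^{T}, \quad v_i := \langle F,[X_i,X_n]\rangle ,
\]
a skew-symmetric $n$-square matrix whose nonzero entries, if any, lie entirely in the last row and the last column. The elementary step is then: such a matrix has rank $0$ when $v = 0$ and rank exactly $2$ when $v \neq 0$, since in the latter case its column space is spanned by the two linearly independent vectors $(v,0)^{T}$ and the $n$-th standard basis vector.

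Finally I would apply Proposition \ref{prop2.4}, which gives $\dim \Omega_F = {\rm rank}\,B_F$, to conclude that $\dim \Omega_F \in \{0,2\}$ for every $F \in \G^*$. I do not anticipate a genuine obstacle: the argument is a direct computation, and the only points requiring care are (a) using commutativity of $\G^1$ to annihilate the top-left $(n-1)$-block of $B_F$, (b) using that $\G^1$ is an ideal so that $[X_i,X_n] \in \G^1$ and the last row and column have the claimed shape, and (c) the short rank computation for this anti-diagonal block matrix. One may moreover note that $v_i = \langle F, A X_i\rangle$, so $v = 0$ exactly when $F$ annihilates $A(\G^1) = \G^1$, i.e. when $F|_{\G^1} = 0$; this pins down precisely which coadjoint orbits are points, although it is not needed for the statement.
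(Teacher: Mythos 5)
Your proof is correct, but it takes a different route from the paper's. The paper does not compute $B_F$ for arbitrary $F$; instead it exhibits a single well-chosen functional, $F = X_{n-1}^*$, shows that ${\rm rank}\, B_F = 2$ there (using that $ad_{X_n}\vert_{\G^1}$ is invertible --- since $\G^1=[\G,\G]={\rm im}(ad_{X_n}\vert_{\G^1})$ when $\G^1$ is commutative --- so that the structure constants $c^{n-1}_{1n},\ldots,c^{n-1}_{n-1,n}$ cannot all vanish), and then invokes the $MD$ hypothesis: one nonzero orbit has dimension $2$, hence the maximal dimension is $2$, hence every orbit has dimension $0$ or $2$. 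You instead compute $B_F$ for every $F$ and observe directly that the anti-diagonal block form $\begin{bmatrix} 0 & v \\ -v^{T} & 0\end{bmatrix}$ has rank $0$ or $2$, never touching the $MD$ hypothesis. Your argument therefore proves strictly more than the lemma claims --- namely that any solvable Lie algebra with commutative $1$-codimensional derived ideal automatically has all coadjoint orbits of dimension $0$ or $2$, which is exactly part (i) of Theorem \ref{thm4.3}, whose proof in the paper is the same computation you give. The paper's version of the argument buys a formulation whose logic (``find one rank-$2$ form, then use the $MD$ dichotomy'') is the template reused in the proof of part (ii) of Theorem \ref{thm4.3}; your version buys the stronger conclusion and the exact description of the point orbits as those $F$ with $F\vert_{\G^1}=0$.
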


\begin{proof} Because $\G^1 = Span \left(X_1, X_2, \ldots , X_{n-1}\right)$ is $(n-1)$-dimensional commutative and $\G$ is non-commutative, so $c_{ij}^l = 0;\, 1 \leqslant i < j \leqslant n-1,\, 1 \leqslant l \leqslant n-1$ and the adjoint $ad_{X_n}$ is an isomorphism on $\G^1$. Therefore the matrix $\left(- c_{jn}^i \right)_{i, j = \overline{1, n-1}}$ of $ad_{X_n}$in the basis $\left(X_1, X_2, \ldots, X_{n-1} \right)$ of $\G^1$ is invertible. In particular, the structure constants $c_{jn}^{n-1} \, \left(1 \leqslant j \leqslant n-1 \right)$ are not concomitantly vanish. Choose $F = X_{n-1}^* \in \G^*$. It is easily seen that the matrix of the Kirillov form ${B_F}$ in the basis $(X_1, X_2, \ldots, X_n)$ as follows
	\[
		B_F =  \begin{bmatrix}
		            0 & 0 & \cdots & 0 & c_{1n}^{n-1} \\
					0 & 0 & \cdots & 0 & c_{2n}^{n-1} \\
					\vdots & \vdots & \ddots & \vdots & \vdots \\
					0 & 0 & \cdots & 0 & c_{n-1,n}^{n-1} \\
					-c_{1n}^{n-1} & -c_{2n}^{n-1} & \cdots & -c_{n-1,n}^{n-1} & 0
					\end{bmatrix}.
	\]
It is obvious that\, ${\rm rank} B_F = 2$\, because\, $c_{1n}^{n-1},\,\ldots,\, c_{n-1,n}^{n-1}$\, are not concomitantly vanish. Since \,$\G$\, is an $MD$-algebra, we get \,$\dim \Omega_F = {\rm rank} B_F \in \{ 0, 2 \}$\, for any $F \in \G^*$.  
\end{proof}

\begin{lem}\label{lem4.14}
The following $(n-k-1)$-square matrix $A$ is invertible 
	\[
		A =\begin{bmatrix}
				c_{k+1,n}^{k+1}& \ldots &c_{n-1,n}^{k+1}\\
				\vdots & \ddots & \vdots \\
				c_{k+1,n}^{n-1}& \cdots &c_{n-1,n}^{n-1}
			\end{bmatrix}.
	\]
\end{lem}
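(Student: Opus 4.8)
The plan is to identify the matrix $A$ --- up to a sign --- with the matrix of the endomorphism of the quotient space $\G^1/\G^2$ induced by $ad_{X_n}$, and then to observe that this induced endomorphism is an isomorphism.

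First I would record the elementary identity
\[
\G^1 = [\G,\G] = [\G^1,\G^1] + [X_n,\G^1] = \G^2 + ad_{X_n}(\G^1),
\]
which follows at once from the vector-space decomposition $\G = \G^1 \oplus \R X_n$ together with the fact that $\G^1$ is an ideal of $\G$, so that $[X_n,\G^1] \subseteq \G^1$. Next, $\G^2 = [\G^1,\G^1]$ is a characteristic ideal of $\G$, hence stable under every derivation and in particular under $ad_{X_n}$; therefore $ad_{X_n}$ descends to a well-defined linear endomorphism $\overline{ad_{X_n}}$ of $\G^1/\G^2$. The displayed identity says precisely that $\overline{ad_{X_n}}$ is surjective, and since $\G^1/\G^2$ is finite-dimensional, $\overline{ad_{X_n}}$ is then bijective.

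It remains to read off the matrix. Since $\G^1 = Span(X_1,\dots,X_{n-1})$ and $\G^2 = Span(X_1,\dots,X_k)$, the classes $\overline{X}_{k+1},\dots,\overline{X}_{n-1}$ form a basis of the $(n-k-1)$-dimensional space $\G^1/\G^2$. For each $j$ with $k+1 \leqslant j \leqslant n-1$ one has
\[
\overline{ad_{X_n}}(\overline{X}_j) = \overline{[X_n,X_j]} = -\overline{[X_j,X_n]} = -\sum_{l=1}^{n-1} c_{j,n}^{l}\,\overline{X}_l = -\sum_{l=k+1}^{n-1} c_{j,n}^{l}\,\overline{X}_l ,
\]
because $\overline{X}_1 = \dots = \overline{X}_k = 0$ in $\G^1/\G^2$. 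Thus the matrix of $\overline{ad_{X_n}}$ in this basis is precisely $-A$, with $A$ the $(n-k-1)$-square matrix of the statement; since $\overline{ad_{X_n}}$ is bijective, $-A$, and hence $A$, is invertible.

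I do not expect a genuine obstacle here. The only points that need a little care are the $ad_{X_n}$-invariance of $\G^2$, which legitimises the passage to the quotient, and the bookkeeping ensuring that it is $A$ itself --- not its transpose or some scalar multiple --- that appears as the matrix of the induced map. Note also that this argument uses neither that $\G$ is an $MD$-algebra nor that $\G^2$ is commutative: it rests only on $\G$ being a finite-dimensional solvable Lie algebra with $\dim\G^1 = n-1$.
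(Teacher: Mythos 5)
Your proof is correct and is in substance the paper's own argument: the paper expands each $X_l$ ($k+1 \leqslant l \leqslant n-1$) as a linear combination of brackets, discards the terms lying in $\G^2$ (using exactly the decomposition $\G^1=[\G^1,\G^1]+[X_n,\G^1]$ you record), and extracts from the resulting coefficient identities a left inverse of $A$ --- which is your statement that the induced map on $\G^1/\G^2$ is surjective, read off in coordinates. Your quotient-space packaging is a cleaner presentation of the same idea (and correctly tracks the sign, identifying the matrix of the induced map as $-A$), but no genuinely different route is taken.
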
                

\begin{proof}
Since $\G^1 = \left[\G, \G \right]$, there exist real numbers $\alpha_{ij} \, \left(1 \leqslant i < j \leqslant n\right)$ such that 
	$$\begin{array}{*{20}{l}} X_{k+1} & = \sum\limits_{1 \leqslant i < j \leqslant n} \alpha_{ij} \left[X_i, X_j 
	\right] \\ 
	 & = \sum \limits_{j = k+1}^{n-1} \alpha_{jn}\left[ X_j, X_n \right] + \sum \limits_{j = 1}^k \alpha 
	_{jn}\left[ X_j, {X_n} \right] + \sum \limits_{1 \leqslant i < j \leqslant n-1} \alpha_{ij} \left[ X_i, X_j 
	\right]\\
	 & = \sum \limits_{j = k+1}^{n-1} \alpha_{jn}\left[ X_j, X_n \right] + LC_1\left(\G^2 \right)\\
	& = \sum \limits_{j = k+1}^{n-1} \alpha_{jn} {\left(\sum \limits_{l = 1}^k c_{jn}^l X_l + 
	 \sum \limits_{l = k + 1}^{n-1} c_{jn}^l X_l \right)} + LC_1\left(\G^2 \right).\end{array}$$
Hence, we get
$$\begin{array}{*{20}{l}} X_{k+1} 
	 & = \sum \limits_{l = k+1}^{n-1} \left(\sum \limits_{j = k+1}^{n-1} c_{jn}^l \alpha_{jn} \right) X_l + 
	 LC_2\left(\G^2 \right); \end{array}$$
where  $LC_1\left(\G^2 \right)$ and $LG_2\left(\G^2 \right)$  are a linear combinatory of the vectors in the basis $\left(X_1, \ldots, X_k\right)$ of  $\G^2$. Because of the independence of chosen basis $\left(X_1, X_2, \ldots, X_n\right)$, these assertions imply that there exists one row-vector $Y_{k+1} \in \R^{n-k-1}$ such that $Y_{k+1}A = \left(1, 0, \ldots, 0\right)$. Similarly, there exist $Y_{k+2}, \ldots, Y_{n-1} \in \R^{n-k-1}$ such that 
		$$Y_{k+2}A = (0, 1, \ldots , 0), \,\ldots, \,Y_{n-1}A = (0, \ldots, 0, 1).$$
So there exist a real matrix $P$ such that $PA = I$, where $I$ is the unit matrix of $Mat_{n-k-1}\left(\R \right)$. Therefore $A$ is an invertible matrix. 
\end{proof}

The following lemma is the well-known result of Linear Algebra for any skew-symmetric real  4-square matrix and it can be easily verified by simple computation.
\begin{lem}\label{lem4.15}
	For any skew-symmetric real 4-square matrix $\left(a_{ij} \right)_{i,j = \overline{1,4}}$, its determinant is zero if and only if ${a_{12}}.{a_{34}} - {a_{13}}.{a_{24}} + {a_{14}}.{a_{23}} = 0.$ \hfill $\square$
\end{lem}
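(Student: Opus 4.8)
The statement to prove is Lemma \ref{lem4.15}: for a skew-symmetric real $4\times 4$ matrix $(a_{ij})$, the determinant vanishes iff $a_{12}a_{34} - a_{13}a_{24} + a_{14}a_{23} = 0$.

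The plan is to compute the determinant of the general skew-symmetric $4\times 4$ matrix directly and recognize it as a perfect square. Write
\[
M = \begin{bmatrix}
0 & a_{12} & a_{13} & a_{14} \\
-a_{12} & 0 & a_{23} & a_{24} \\
-a_{13} & -a_{23} & 0 & a_{34} \\
-a_{14} & -a_{24} & -a_{34} & 0
\end{bmatrix}.
\]
First I would expand $\det M$ along the first row (or use cofactor expansion twice), collecting the six resulting terms. The key classical fact is that the determinant of an even-order skew-symmetric matrix is the square of its Pfaffian; for the $4\times 4$ case the Pfaffian is exactly $\operatorname{Pf}(M) = a_{12}a_{34} - a_{13}a_{24} + a_{14}a_{23}$. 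So the computation should yield
\[
\det M = \bigl(a_{12}a_{34} - a_{13}a_{24} + a_{14}a_{23}\bigr)^2.
\]
Once this identity is established, the equivalence is immediate: a real number squared is zero if and only if the number itself is zero, hence $\det M = 0 \iff a_{12}a_{34} - a_{13}a_{24} + a_{14}a_{23} = 0$.

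Concretely, the main (and only) step is the brute-force polynomial expansion. Expanding along the first row gives three $3\times 3$ determinants; each of these, being a determinant of a matrix with a zero on its diagonal in the right spot, expands into a handful of monomials. After collecting like terms one checks that all cross terms organize into the expansion of the square $(a_{12}a_{34} - a_{13}a_{24} + a_{14}a_{23})^2 = a_{12}^2 a_{34}^2 + a_{13}^2 a_{24}^2 + a_{14}^2 a_{23}^2 - 2a_{12}a_{34}a_{13}a_{24} + 2a_{12}a_{34}a_{14}a_{23} - 2a_{13}a_{24}a_{14}a_{23}$. Since the paper already asserts this lemma can be verified ``by simple computation,'' I would present the Pfaffian identity as the conceptual reason and leave the six-term verification to the reader, or alternatively invoke the general theorem that $\det$ of a skew-symmetric matrix of even size is the square of its Pfaffian.

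I do not anticipate any genuine obstacle here — the result is elementary linear algebra. The only thing requiring minor care is bookkeeping of signs in the cofactor expansion; a clean way to avoid error is to first reduce $M$ by the congruence transformations used to bring skew-symmetric matrices to canonical block-diagonal form $\operatorname{diag}(\Lambda_1,\Lambda_2)$ with $\Lambda_i = \begin{bmatrix} 0 & \lambda_i \\ -\lambda_i & 0\end{bmatrix}$, noting that both $\det M$ and the quadratic form $a_{12}a_{34} - a_{13}a_{24} + a_{14}a_{23}$ (up to the square of the determinant of the change-of-basis matrix) transform compatibly, and then the identity $\det = (\lambda_1\lambda_2)^2$ is trivial. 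Either route closes the proof.
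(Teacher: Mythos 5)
Your proposal is correct and matches the paper's intent: the paper offers no written proof beyond asserting the lemma ``can be easily verified by simple computation,'' and the computation you describe --- expanding $\det M$ and recognizing it as $\bigl(a_{12}a_{34}-a_{13}a_{24}+a_{14}a_{23}\bigr)^2$, the square of the Pfaffian --- is exactly that verification. The only nitpick is in your alternative congruence route: under $M\mapsto C^TMC$ the Pfaffian scales by $\det C$ (not $(\det C)^2$), but since $\det C\neq 0$ its vanishing is still preserved, so that route also closes.
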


\begin{proof}[{\bf Proof of Theorem \ref{thm4.3}}] We now prove Theorem 4.3.

\noindent{\bf Proof of the part $\mathbf{(i)}$.}\, Let $\G$ be a real solvable Lie algebra of dimension $n$ whose the first derived ideal $\G^1 \cong {{\mathbb {R}}^{n - 1}}$ is 1-codimensional and commutative. Recall that, with notations as above, $\G^1 \equiv \R.X_1 \oplus \R.X_2 \oplus \cdots \oplus \R.X_{n-1}$.

Let $F$ be an arbitrary element in $\G^* \equiv \R^n$. Put 
$$\left\langle F, [X_i, X_n] \right\rangle = a_i; \, 1 \leqslant i \leqslant n-1.$$ 

Then, by simple computation, we can see that the matrix ${B_F}$ of the Kirillov form ${B_F}$ is given as follows.
		\[
			B_F =  \begin{bmatrix}
		            0 & 0 & \cdots & 0 & a_1 \\
					0 & 0 & \cdots & 0 & a_2 \\
					\vdots & \vdots & \ddots & \vdots & \vdots \\
					0 & 0 & \cdots & 0 & a_{n-1} \\
					-a_1 & -a_2 & \cdots & -a_{n-1} & 0
					\end{bmatrix}.
			\]
It is clear that ${\rm rank} B_F \in \left\{ {0,2} \right\}$ and, for every $F \in \G^*$, ${\rm rank} B_F$ is not concomitantly vanish. Hence, by virtue of Proposition \ref{prop2.4}, $\G$ is an $MD(n, n-1)$-algebra.

\noindent{\bf Proof of the part $\mathbf{(ii)}$.}\, We will show that if $\G$ is an real solvable Lie algebra of dimension $n > 4$ whose the first derived ideal $\G^1$ is 1-codimensional and non-commutative then $\G$ is not be an $MD$-algebra. 

Recall that, we always choose one basis $\left(X_1, X_2, \ldots, X_n \right)$ of $\G$ such that $\G^1 = Span \left(X_1, X_2, \ldots, X_{n-1} \right)$ and $\G^2 = Span \left(X_1, \ldots, X_k \right), \, k \leqslant n-2$. There are some cases which contradict each other for the values of $k$ as follows.

\noindent{\bf The first case: $k = n-2$.} \, Then, $\dim \G^2 = \dim \G^1 - 1$. According to Proposition \ref{prop2.10}, $\G$ is not an $MD$-algebra.

\noindent{\bf The second case: $k \leqslant n-3$.} \, It is sufficient to prove for just $k = n - 3$ because the proof for each $k$ in this case is similar. That means $\G^2 = Span \left(X_1, X_2, \ldots, X_{n-3} \right)$. 
	$$[X_{n-1}, X_n] = \sum\limits_{l = 1}^{n-1} c_{n-1,n}^l{X_l}; \, [X_{n-2}, X_n] = \sum\limits_{l = 1}^{n-1} c_{n-2,n}^l{X_l};$$
	$$[X_{n-2}, X_{n-1}] = \sum\limits_{l = 1}^{n-3} c_{n-2,n-1}^l{X_l};$$ 
	$$[X_i, X_n] = \sum\limits_{l = 1}^{n-3} c_{in}^l{X_l}; \, [X_i, X_j] = \sum\limits_{l = 1}^{n-3} c_{ij}^l{X_l}; \, 
1 \leqslant  i < j \leqslant n-3.$$
According to the Lemma \ref{lem4.14}, matrix $P = \begin{bmatrix}c_{n-2,n}^{n-2} & c_{n-1,n}^{n-2} \\ C_{n-2,n}^{n-1} & C_{n-1,n}^{n-1}\end{bmatrix}$ is invertible. Let $F$ be an arbitrary element of $\G^*$. Put 
	$$\left\langle F, [X_{n-2}, X_{n-1}] \right\rangle = a; \,\, \left\langle F, [X_{n-2}, X_n] \right\rangle = b; \,\, \left\langle F, [X_{n-1}, X_n] \right\rangle = c.$$
Then the matrix of the Kirillov form ${B_F}$ in the basis $\left(X_1, X_2, \ldots, X_n \right)$ is given by
$$B_F =  \begin{bmatrix}
		            0 & 0 & \cdots & 0 & * & * & * \\
					0 & 0 & \cdots & 0 & * & * & * \\
					\vdots & \vdots & \ddots & \vdots & \vdots & \vdots & \vdots \\
					0 & 0 & \cdots & 0 & * & * & * \\
					* & * & \cdots & * & 0 & a & b \\
					* & * & \cdots & * & -a & 0 & c \\
					* & * & \cdots & * & -b & -c & 0
					\end{bmatrix},$$
in which the asterisks denote the undetermined numbers. 

Let us consider the 4-square submatrices of ${B_F}$ established by the elements which are on the rows and the columns of the same ordinal numbers $i,\, n-2,\, n-1,\, n\, \,(i \leqslant n-3)$. According to Lemma \ref{lem4.13}, ${\rm rank} B_F \in \lbrace 0, 2\rbrace$ and this implies that the determinants of these considered 4-square submatrices are zero for any $F \in \G^*$. In view of Lemma \ref{lem4.15}, the following structure constants are vanished:
	$$c_{i,n-2}^l = c_{i,n-1}^l = 0; \,\, 1 \leqslant i,\, l \leqslant n-3.$$
This implies  
	$$[X_i, X_{n-2}] = [X_i, X_{n-1}] = 0, \, 1 \leqslant i \leqslant n-3.$$
Therefore, we get
	\begin{flalign*}
		Span \left(X_1, \ldots, X_{n-3} \right) & =  \G^2 = \left[\G^1, \G^1\right] \\
			& = Span \left([X_i, X_j]\,\vert \, 1 \leqslant  i,j \leqslant  n-1 \right)\\
			& = Span \left([X_{n-2}, X_{n-1}] \right).
	\end{flalign*}
So $n - 3 = \dim \G^2 \leqslant 1$, i.e. $n \leqslant 4$, which conflicts with the assumption that $n > 4$. The proof is complete.
\end{proof} 
  
\subsection{Proof of Theorem \ref{thm4.5}}

	As vector spaces (without the Lie structures), we have 
$$\G = Span \left(X_1, X_2, \ldots, X_n \right) \equiv \R^n; \,\, \G^1 = Span \left(X_1, X_2, \ldots, X_{n-1} \right) \equiv \R^{n-1}.$$
Let $A = (a_{ij})_{i,j = \overline{1,n-1}}$ be a some invertible real $(n-1)$-square matrix. 

\begin{proof}

	{\bf Proof of part $\mathbf{(i)}$.}\, We define a Lie structure on $\G$ such that $\G^1$ is commutative and $A$ is exactly the matrix of adjoint map $ad_{X_n}$ on $\G^1$ in the chosen basis $\left(X_1, X_2, \ldots, X_n\right)$. Namely, the Lie brackets $[\cdot,\cdot]_A$ on $\G$ are given as follows.
	\begin{equation}\label{equ4.1}
		[X_n, X_j]_A : = \sum \limits_{i < n} a_{ij}{X_i} \, ; \, j = 1, 2, \ldots, n-1;\,\, \text{the others are trivial}.
	\end{equation}
With such Lie structure, the derived ideal of $\G$ is commutative and exactly equals to $\G^1$. Hence, $\G$ is an $MD(n,n-1)$-algebra.

	Conversely, suppose there is a Lie structure on $\G$ whose the Lie brackets $[\cdot,\cdot]$ satisfies the above property \eqref{equ4.1}. Because the first derived ideal of $\G$ is commutative and equals to $\G^1$, one has $[X_i, X_j] = 0$ for all $i, j = 1, 2, \ldots, n-1$. On the other hand, $A$ is the matrix of adjoint map $ad_{X_n}$ on $\G^1$. Therefore, we get $\left[X_n, X_j\right] = \sum \limits_{i < n} {a_{ij}X_i}$ for all $j = 1, 2, \ldots, n-1$. That means $[\cdot,\cdot] \equiv [\cdot,\cdot]_A$ and the part (i) is proved.


\noindent {\bf Proof of part $\mathbf{(ii)}$.}\, Let $B = (b_{ij})_{i,j = \overline{1,n-1}}$ be an other invertible real $(n-1)$-square matrix and $[\cdot,\cdot]_B$ the Lie brackets on $\G$ which is defined by $B$. Then, we have

$[X_n, X_j]_B = \sum\limits_{i < n} {b_{ij}X_i} \, ;  j = 1, 2, \ldots,n-1 \, \, \, \text{(the other Lie brackets are trivial)}.$

\noindent$(\Longrightarrow)$ Suppose that $A$ and $B$ define two Lie structures on $\G$ which are isomorphic. We will show that there exist a real number $c \neq 0$ and an invertible real $(n-1)$-square matrix $C$ such that $cA = CBC^{-1}$.	Denote by $f: \left(\G, [\cdot,\cdot]_B\right) \to \left(\G, [\cdot,\cdot]_A \right)$ the isomorphism between two Lie structures on $\G$ defined by $B$ and $A$, respectively. This means that $f$ is a linear isomorphism and $f$ preserves the Lie brackets. Let $M = (c_{ij})_{i,j = \overline{1,n}}$ be the invertible $n$-square matrix of $f$ in the basis $(X_1, X_2, \ldots , X_n)$, i.e. $f(X_j) = \sum\limits_{i = 1}^{n} {{c_{ij}}{X_i}}$ for al $j = 1, 2, \ldots, n$. Since $f$ is an isomorphism, $f \left(\G^1,[\cdot,\cdot]_B\right) = \left(\G^1, [\cdot,\cdot]_A\right)$, i.e. $c_{nj} = 0$ for all $j = 1, 2, \ldots, n-1$. That means $f(X_j) = \sum \limits_{i < n} {{c_{ij}}{X_i}}$ for all $j = 1, 2, \ldots,n-1$. Put $C = (c_{ij})_{i,j = \overline{1,n-1}}$ and $c_{nn} = c$, we get $f(X_n) = \sum\limits_{i < n} {{c_{in}}{X_i} + c{X_n}}$. Then the matrix of $f$ in the basis $(X_1, X_2, \ldots, X_n)$ is given by

\centerline{$M = \begin{bmatrix}
				c_{11} & \cdots & c_{1,n-1} & c_{1n}\\
				\vdots & \cdots & \vdots & \vdots \\
				c_{n-1,1} & \cdots & c_{n-1,n - 1} & c_{n-1,n} \\
				0& \cdots &0&c
				\end{bmatrix} = \begin{bmatrix} C & * \\ 0 & c \end{bmatrix},$}
				
\noindent where the asterisk denotes the column vector $(c_{1n}, c_{2n}, \ldots, c_{n,n-1})^T$. Since $f$ is an isomorphism, $0 \neq \det M = c\det C$. Therefore, $\det C \neq 0$ and $C$ is an invertible $(n-1)$-square matrix. Furthermore, we have 
	\[
		\begin{array}{l l}
			&  f([X_n, X_j]_B) = [f(X_n), f(X_j)]_A;  \quad \forall j = 1, \ldots, n \\
			\Leftrightarrow  & f\left( \sum\limits_{i < n} {{b_{ij}}{X_i}}\right) = \left[\sum\limits_{k < n} {{c_{kn}}{X_k}}  + 
  						   c{X_n} + c_{ni}X_n, \sum\limits_{l < n} {{c_{lj}}{X_l}}\right]_A; \quad \forall j = 1, \ldots , n \\
              \Leftrightarrow & \sum \limits_{i < n} {{b_{ij}}f({X_i})}   = c \sum \limits_{l < n} {{c_{ln}}}[X_n, 
              				X_l]_A; \quad \forall j = 1, \ldots, n \\
              \Leftrightarrow & \sum\limits_{i < n} {{b_{ij}}\left( {\sum\limits_{k < n} {{c_{ki}}{X_k}} } 
              					\right)} = c \sum\limits_{l < n} {{c_{ln}}} \left( {\sum\limits_{k < n}{{a_{kl}}{X_k}} } \right);\quad \forall j = 1, \ldots, n \\
              \Leftrightarrow & \sum\limits_{k < n} {\left( {\sum\limits_{i < n} {{c_{ki}}{b_{ij}}} } \right)}{X_k} = c \sum\limits_{k < n} {\left( {\sum\limits_{l < n} {{a_{kl}}{c_{ln}}}} \right)} {X_k};\quad \forall j = 1, \ldots, n \\
              \Leftrightarrow & \sum\limits_{k < n} {\left( {\sum\limits_{i < n} {{c_{ki}}{b_{ij}}} - c\sum\limits_{l < n} {{a_{kl}}{c_{ln}}} } \right){X_k}} = 0; \quad \forall j = 1, \ldots, n \\
              \Leftrightarrow & \sum\limits_{i < n} {{c_{ki}}{b_{ij}}} - c\sum\limits_{l < n} {{a_{kl}}{c_{ln}}} = 0; \quad \forall k, j = 1, \ldots,n-1 \\
              \Leftrightarrow & \sum\limits_{i < n} {{c_{ki}}{b_{ij}}} = c\sum\limits_{l < n} {{a_{kl}}{c_{ln}}}; \quad \forall k, j = 1, \ldots,n-1 \\
              \Leftrightarrow & (CB)_{kj} = c(AC)_{kj}; \quad \forall k, j = 1, \ldots,n-1 \\
	   		  \Leftrightarrow & CB = cAC  \\
              \Leftrightarrow & cA = CBC^{-1}.
              \end{array}
             \]
              
\noindent$(\Longleftarrow)$ Conversely, if there exist a non-zero real number $c$ and an invertible $(n-1)$-square matrix $C =(c_{ij})_{i,j = \overline{1,n-1}}$ such that $cA = CBC^{-1}$. We will show that $A, B$ define on $\G$ two Lie structures which are isomorphic. Indeed, we denote by $f: \G \to \G$ the linear isomorphism of $\G$ which is defined, in the basis $(X_1, X_2, \ldots, X_n)$, by the following invertible $n$-square matrix 
	\[
		M' = \begin{bmatrix}
					c_{11} & \cdots & c_{1,n-1} & 0\\
					\vdots & \cdots & \vdots & \vdots \\
					c_{n-1,1} & \cdots & c_{n-1,n-1} & 0\\
					0 & \cdots & 0 & c
				\end{bmatrix}
		= \begin{bmatrix}
				C & 0 \\ 0 & c
			\end{bmatrix}.
	\]
It can be verified that $f$ preserves the Lie brackets of $\left(\G, [\cdot,\cdot]_B\right)$ and $\left(\G, [\cdot,\cdot]_A\right)$, i.e. $f$ is also a Lie isomorphism. The proof is complete.
\end{proof}

\section{Conclusion}

We close the paper with some remarkable comments on the problem of the classification of $MD$-class.

\subsection{$MD^{2k}$-class}

	We emphasize that the problem of classification of $MD$-algebras is still open up to now. There are at least three ways of proceeding in the classification of $MD$-class as follows:
\begin{itemize}
	\item {\bf The first way}: By fixity of the dimension of $MD$-algebras. 
	\item {\bf The second way}: By fixity of the maximal dimension of coadjoint orbits.
	\item {\bf The third way}: Combining form of the above ways.
\end{itemize} 

For example, the classifications of $MD4$-class, $MD5$-class and $MD(*, 1)$-class and $MD(*, * - 1)$-class in the paper are the results belonging to the first way, but the Arnal's list is the result of the second way. To classify $MD$-class by the second or third way, we give the following definitions.

\begin{defn}
	Each $n$-dimensional solvable Lie group $G$ whose coadjoint orbits have dimension zero or $2k \, (0 < 2k < n)$ is called \emph{an $MD^{2k}n$-group}. The Lie algebra $\G = {\rm Lie} (G)$ of $G$ is called \emph{an $MD^{2k}n$-algebra}. When we do not pay attention to the dimension of the considered group or algebra, we will call $G$ or $\G$ \emph{an $MD^{2k}$-group} or \emph{$MD^{2k}$-algebra}, respectively.
\end{defn}

\begin{defn}
	The set of all $MD^{2k}n$-algebras or $MD^{2k}$-algebras will be denoted by \emph{$MD^{2k}n$-class} or \emph{$MD^{2k}$-class}, respectively.
\end{defn}

\subsection{Examples of $MD^4$-algebras}

It has long been known that the Lie algebra ${\rm aff} (\mathbb{C})$ of the group ${\rm Aff} (\mathbb{C})$ of the affine transformations of complex straight line is an $MD^44$-algebra and the 5-dimensional Heisenberg Lie algebra $\mathfrak{h}_5$ is an $MD^45$-algebra. Now we introduce two examples of indecomposable $MD^4$-algebras.

\noindent {\bf The First Example:}\, Let $\G_{2m} = Span \left(X_1, X_2, \ldots , X_{2m} \right)$ be the $2m$-dimensional real Lie algebra ($2 \leqslant m \in \mathbb{N}$) with Lie brackets as follows

$$[X_1, X_k]: = X_k; \, [X_2, X_{2j - 1}]: = X_{2j}; \, [X_2, X_{2j}]: = - X_{2j - 1};$$

\noindent with $3 \leqslant k \leqslant 2m, \, 2 \leqslant j \leqslant m.$

\noindent {\bf The Second Example:}\, Let $\G_{2m + 1} = Span \left(X_1, X_2, \ldots , X_{2m + 1} \right)$ be the $(2m + 1)$-dimensional real Lie algebra ($2 \leqslant m \in \mathbb{N}$) with Lie brackets as follows
	\[
		[X_1, X_k]: = X_k; \, 3 \leqslant k \leqslant 2m; \, [X_3, X_4] = X_{2m + 1}; \, [X_1, X_{2m + 1}] = 2X_{2m + 1};
	\]
	\[
		[X_2, X_{2j}]: = - X_{2j - 1}; \, [X_2, X_{2j - 1}]: = X_{2j}; \, 2 \leqslant j \leqslant m.
	\]

Upon simple computation, taking Proposition \ref{prop2.4} into account we get the following proposition.

\begin{prop}
	$\G_{2m}$ is an indecomposable $2m$-dimensional $MD^4$-algebra and $\G_{2m + 1}$ is an indecomposable $(2m + 1)$-dimensional $MD^4$-algebra.
\end{prop}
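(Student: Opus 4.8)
The plan is to treat the two families in parallel: first establish the $MD^{4}$ property via Proposition \ref{prop2.4} (by computing the rank of the Kirillov form $B_{F}$ for arbitrary $F\in\G^{*}$), then establish indecomposability by a structural argument based on the center. I would start by recording the derived ideals. For $\G_{2m}$: $\G^{1}=\mathrm{Span}(X_{3},\dots,X_{2m})$ is abelian of codimension $2$ and $\G^{2}=0$. For $\G_{2m+1}$: $\G^{1}=\mathrm{Span}(X_{3},\dots,X_{2m+1})$ has codimension $2$, with one-dimensional second derived ideal $\G^{2}=\mathrm{Span}(X_{2m+1})$ — in particular $\G^{2}$ is commutative, consistent with Proposition \ref{prop2.8}. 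In both cases $Z(\G)=0$, a quick direct check that will be used later.

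For the Kirillov form, write $F=\sum f_{i}X_{i}^{*}$ and assemble $B_{F}=\bigl(\langle F,[X_{i},X_{j}]\rangle\bigr)$. For $\G_{2m}$ this matrix has the block shape $\left(\begin{smallmatrix}0&N\\-N^{T}&0\end{smallmatrix}\right)$, where $N$ is the $2\times(2m-2)$ matrix with rows $(f_{3},f_{4},\dots,f_{2m})$ and $(f_{4},-f_{3},f_{6},-f_{5},\dots,f_{2m},-f_{2m-1})$; hence $\mathrm{rank}\,B_{F}=2\,\mathrm{rank}\,N\in\{0,2,4\}$, and $\mathrm{rank}\,N=1$ is impossible because if the second row equalled $\lambda$ times the first then $-f_{3}=\lambda f_{4}=\lambda^{2}f_{3}$ would force $f_{3}=0$, hence $f_{4}=0$, and inductively the first row vanishes. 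So $\mathrm{rank}\,B_{F}\in\{0,4\}$ and $\G_{2m}$ is an $MD^{4}$-algebra. For $\G_{2m+1}$ I would split on $c:=\langle F,X_{2m+1}\rangle$. If $c=0$, the relations $[X_{3},X_{4}]=X_{2m+1}$ and $[X_{1},X_{2m+1}]=2X_{2m+1}$ contribute nothing to $B_{F}$, which collapses to the $\G_{2m}$-form with $X_{2m+1}$ in its kernel, so $\mathrm{rank}\,B_{F}\in\{0,4\}$ again. If $c\neq0$, the $4\times4$ skew submatrix of $B_{F}$ on rows and columns indexed by $X_{1},X_{3},X_{4},X_{2m+1}$ has Pfaffian $2c^{2}\neq0$ by Lemma \ref{lem4.15}, so $\mathrm{rank}\,B_{F}\geqslant4$; it then remains to check that $\mathrm{rank}\,B_{F}=4$ exactly.

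That last point is where I expect the main obstacle to be. One must show that once the ``Heisenberg block'' on $\{X_{1},X_{3},X_{4},X_{2m+1}\}$ is used up, the remaining entries of $B_{F}$ — those coming from the action of $X_{1}$ and $X_{2}$ on the pairs $(X_{5},X_{6}),\dots,(X_{2m-1},X_{2m})$ — do not raise the rank. Concretely I would compute the kernel of $B_{F}$ by solving $\langle F,[Y,X_{j}]\rangle=0$ for $Y=\sum y_{i}X_{i}$: the equations from $j=2m+1,3,4,1$ pin down $y_{1},y_{3},y_{4},y_{2m+1}$ (with pivot coefficients $2c,c,c,-2c$), and one must verify that the equations from the remaining $j$ reduce to at most this, leaving a kernel of dimension $n-4$. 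This is delicate bookkeeping — keeping careful track of how the semisimple actions of $X_{1}$ and $X_{2}$ interlock with the nilpotent relation $[X_{3},X_{4}]=X_{2m+1}$ — rather than a conceptual difficulty.

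For indecomposability, suppose $\G=\A\oplus\B$ with $\A,\B$ nonzero ideals. From $Z(\G)=0$ we get $Z(\A)=Z(\B)=0$, so neither summand is abelian, whence $\A^{1}\neq0\neq\B^{1}$; moreover $\A^{1},\B^{1}$ are ideals of $\G$ (as $[\A,\B]=0$) with $\G^{1}=\A^{1}\oplus\B^{1}$, and since $\G/\G^{1}$ is $2$-dimensional, $\dim(\A/\A^{1})=\dim(\B/\B^{1})=1$, say $\A=\A^{1}\oplus\R a$, $\B=\B^{1}\oplus\R b$. For $\G_{2m}$: $\A^{1}$ is abelian and $\mathrm{ad}_{X_{1}}|_{\G^{1}}=\mathrm{id}$, so writing $X_{1}=A_{1}+B_{1}$ and splitting $A_{1}$ along $\A^{1}\oplus\R a$ forces $\mathrm{ad}_{a}|_{\A^{1}}$ to be an invertible scalar; the same splitting for $X_{2}$ then makes $\mathrm{ad}_{X_{2}}|_{\A^{1}}$ a real scalar matrix — impossible, since $\A^{1}$ is a nonzero $\mathrm{ad}_{X_{2}}$-invariant subspace and $\mathrm{ad}_{X_{2}}|_{\G^{1}}$ has only the non-real eigenvalues $\pm i$. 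For $\G_{2m+1}$: the one-dimensionality of $\G^{2}=\mathrm{Span}(X_{2m+1})=\A^{2}\oplus\B^{2}$ forces, say, $\B^{2}=0$ (so $\B^{1}$ is abelian) and $X_{2m+1}\in\A^{1}$; running the same argument on $\B^{1}$, now with $\mathrm{ad}_{X_{1}}|_{\G^{1}}$ semisimple with eigenvalues $1,2$ and $\mathrm{ad}_{X_{2}}|_{\G^{1}}$ semisimple with eigenvalues $\pm i,0$, shows $\mathrm{ad}_{X_{2}}|_{\B^{1}}$ is a scalar multiple of $\mathrm{ad}_{X_{1}}|_{\B^{1}}$, hence has real spectrum, hence vanishes; then $\B^{1}\subseteq\ker(\mathrm{ad}_{X_{2}}|_{\G^{1}})=\mathrm{Span}(X_{2m+1})\subseteq\A^{1}$, contradicting $\A^{1}\cap\B^{1}=0$ and $\B^{1}\neq0$. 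Finally, noting that the maximal orbit dimension $4$ is strictly below $\dim\G$ (for $m\geqslant3$ in the first family, $m\geqslant2$ in the second) confirms these are genuine $MD^{4}$-algebras.
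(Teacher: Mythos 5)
Your handling of $\G_{2m}$ is complete and correct, and your indecomposability arguments for both families are sound; the paper itself gives no proof beyond ``upon simple computation,'' so there is nothing there to compare against. The fatal problem is exactly the step you deferred: for $\G_{2m+1}$ with $c=\langle F,X_{2m+1}\rangle\neq 0$ you establish ${\rm rank}\,B_F\geqslant 4$ and then describe the verification that the rank is \emph{exactly} $4$ as ``delicate bookkeeping rather than a conceptual difficulty.'' That verification fails for every $m\geqslant 3$. Take $F=X_5^*+X_{2m+1}^*$. The only nonzero entries of $B_F$ above the diagonal are
$$B_F(X_1,X_5)=1,\qquad B_F(X_2,X_6)=-1,\qquad B_F(X_3,X_4)=1,\qquad B_F(X_1,X_{2m+1})=2,$$
and the first three occupy pairwise disjoint index pairs, so the principal $6\times 6$ submatrix on $(X_1,\dots,X_6)$ has Pfaffian $\pm 1$ and ${\rm rank}\,B_F=6$. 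You can see the same thing inside your own kernel computation: once $y_1=0$ is forced by the $j=2m+1$ equation, the equations for $j=5,\dots,2m$ read $y_2f_{k}=0$, which kill $y_2$ (hence also $y_3=y_4=0$) whenever some $f_k\neq 0$ with $k\geqslant 5$, and the $j=2$ equation then imposes one further independent condition on $y_5,\dots,y_{2m}$; the kernel has dimension $(2m+1)-6$, not $(2m+1)-4$. So for $m\geqslant 3$ the algebra $\G_{2m+1}$ has coadjoint orbits of dimensions $0$, $4$ and $6$ and is not an $MD$-algebra at all; the second half of the statement is true only for $m=2$. Your proof cannot be completed as written because the claim itself is false with the printed brackets (and the natural repair $[X_{2j-1},X_{2j}]=X_{2m+1}$ for all $j$ does not help: the same $F$ still yields rank $6$).

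Two minor points besides. First, as you already noticed, for $m=2$ the algebra $\G_4$ (which is ${\rm aff}(\mathbb{C})$) has maximal orbit dimension $4=\dim\G$, so it satisfies the paper's definition of an $MD^{2k}n$-algebra only if one drops the requirement $2k<n$. Second, in the $\G_{2m+1}$ indecomposability argument the step ``real spectrum, hence vanishes'' needs the (true but unstated) semisimplicity of ${\rm ad}_{X_2}|_{\G^1}$ to conclude that a restriction with spectrum $\{0\}$ is the zero map; worth one sentence.
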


\subsection{Some open problems} We have at least two open problems as follows.

\subsubsection{\bf Classify $MD(n, m)$-class, $MD(n, n - m)$-class; $2 \leqslant m \leqslant n - 2, n \geqslant 6$}

\subsubsection{\bf Classify $MD^{2k}n$-class or $MD^{2k}$-class; $2k \geqslant 4, n \geqslant 6$} \hfill

\bigskip
	In the next papers, we will discuss the classification of $MD(n, 2)$-class, $MD(n, n - 2)$-class and $MD^4n$-class for $6 \leqslant n \leqslant 8$. 

\begin{aknow} 
	The authors would like to take this opportunity to thank the scientific program of University of Economics and Law, Vietnam National University -- Ho Chi Minh City for financial supports.
\end{aknow}

\bibliographystyle{amsxport}

\begin{bibdiv}
\begin{biblist}

\bib{ACL95}{article}{
author={Arnal, D.}, author={Cahen, M.}, author={Luwig, J.},
title={Lie Groups whose Coadjoint Orbits are of Dimension Smaller or Equal to Two},
journal={Letter in Mathematical Physics},
volume={33},
number={},
date={1995},
pages={183 -- 186},
}

\bib{BFNT13}{article}{
author={Boza, L.}, author={Fedrian, E. M.}, author={Nunez, J.}, author={Tenorio, A. F.},
title={A Historical Review of the Classifications of Lie Algebras},
journal={Revista De La Union Matematica Argentina},
volume={54},
number={2},
date={2013},
pages={75 -- 99},
}

\bib{Car94}{thesis}{
	author={Cartan, E.},
    title={Sur la structure des groupes de transformations finis et continus},
  	place={Nony, Paris},
  	date={1894},
	pages={137 -- 287},
} 

\bib{CS07}{article}{
	author={Campoamor-Stursberg, R.},
    title={A note on the classification of nine-dimensional Lie algebras with nontrivial Levi decomposition},
   journal={Int. Math. Forum},
   volume={2},
   number={},
   date={2007},
   pages={25 -- 28},
} 

\bib{Che12}{article}{
	author={Chen, L.},
   title={A Class of Solvable Lie Algebras with Triangular Decompositions},
   journal={Communications in Algebra},
   volume={40},
   number={},
   date={2012},
   pages={2285 -- 2300},
} 

\bib{Do99}{book}{
   author={Do, N. D.},
   title={Method of Noncommutative Geometry for Group $C^*$-algebras},
   publisher={Chapman and Hall-CRC Press},
   place={Cambridge},
   date={1999},
   pages={},
}	

\bib{DPU12}{article}{
	 author={Duong, M. T.}, author={Pinczon, G.}, author={Ushirobira, R.},
	 title={A new invariant of quadratic Lie algebras},
	 journal={Algebr. Represent. Theory},
    volume={15},
   number={},
   date={2012},
   pages={1163 -- 1203},
}

\bib{GT99}{article}
{
	 author={Galitski, L. Yu.}, author={Timashev, D. A.},
	 title={On classification of metabelian Lie algebras},
	 journal={Journal of Lie Theory},
    volume={9},
   number={},
   date={1999},
   pages={125 -- 156},
}

\bib{Gan39}{article}{
	 author={Gantmakher, F. R.},
	 title={On the Classifiction of Real Simple Lie Group},
   journal={Mat. Sb.},
   volume={5},
   date={1939},
   pages={217 -- 250},
}

\bib{Gau73}{article}{
	 author={Gauger, M. A.},
	 title={On the classification of metabelian Lie algebras},
   journal={Trans. Amer. Math. Soc},
   volume={179},
   date={1973},
   pages={293 -- 329},
}

\bib{Kat07}{article}{
	 author={Kath, I.},
	 title={Nilpotent Metric Lie Algebras of Small Dimension},
   journal={J. Lie Theory},
   volume={17},
   number={1},
   date={2007},
   pages={41 -- 61},
}

\bib{Kir76}{book}{
   author={Kirillov, A. A.},
   title={Elements of the Theory of  Representations},
   publisher={Springer-Verlag},
   place={New York},
   date={1976},
}

\bib{Le90-1}{article}{
	 author={Le, A. V.},
	 title={On the structure of the  $C^*$-Algebra of the Foliation formed by the $K$-Orbits of maximal dimendion of the Real Diamond Group},
   journal={Journal of Operator Theory},
   volume={24},
   number={},
   date={1990},
   pages={227 -- 238},
}	 

\bib{Le90-2}{article}{
	 author={Le, A. V.},
	 title={On the Foliations Formed by the Generic $K$-orbits of the MD4-Groups},
   journal={Acta Mathematica Vietnamica},
   volume={15},
   number={2},
   date={1990},
   pages={39 -- 55},
}	  

\bib{Le93}{article}{
	 author={Le, A. V.},
	 title={Foliations Formed by Orbits of Maximal Dimension in the Co-adjoint Representation of a Class of Solvable Lie Groups},
   journal={Vest. Moscow Uni., Math. Bulletin},
   volume={3},
   number={48},
   date={1993},
   pages={24 -- 27},
}	  

\bib{LS08}{article}{
	 author={Le, A. V.}, author={Shum, K. P.},
	 title={Classification of 5-dimensional MD-algebra having commutative derived ideal},
   journal={Advances in Algebra and Combinatorics, Singapore: World Scientific},
   volume={12},
   number={46},
   date={2008},
   pages={353 -- 371},
}	  	

\bib{LHT11}{article}{
	 author={Le, A. V.}, author={Ha, V. H.}, author={Tran, T. H. N.},
	 title={Classification of 5-dimensional MD-algebras having non-commutative derived ideals},
   journal={East-West Journal of Mathematics},
   volume={13},
   number={2},
   date={2011},
   pages={115 -- 129},
}	 

\bib{Mal45}{article}{
	 author={Maltsev, A. I.}, 
	 title={On solvable Lie algebras},
   journal={Bull. Acad. Sci. URSS. S´er. Math.},
   volume={9},
   number={1},
   date={1945},
   pages={329 -- 356},
}

\bib{Mub63}{article}{
	 author={Mubarakzyanov, G. M.},
	 title={Classification of real structures of Lie algebras of fifth order},
   journal={Izv. Vyssh. Uchebn. Zaved., Mat.},
   volume={3},
   number={34},
   date={1963},
   pages={99 -- 106},
}	

\bib{PZ90}{article}{
	 author={Patera, J.},  author={Zassenhaus, H.},
	 title={Solvable Lie algebras of dimension $\leqslant$ 4 over perfect fields},
   journal={Linear Algebras Appl.},
   volume={142},
   number={1},
   date={1990},
   pages={1 -- 17},
}	

\bib{Sno10}{article}{
	 author={\u Snobl, L.},
	 title={On the structure of maximal solvable extensions and of Levi extensions of nilpotent Lie algebras},
   journal={J. Phys.},
   volume={A43},
   number={50},
   date={2010},
   pages={1 -- 17},
}	

\bib{SK10}{article}{
	 author={\u Snobl, L.}, author={Karasek, D.},
	 title={Classification of solvable Lie algebras with a given nilradical by means of solvable extensions of its subalgebras},
   journal={Linear Algebra Appl.},
   volume={432},
   number={7},
   date={2010},
   pages={1836 -- 1850},
}	

\bib{SP05}{article}{
	 author={\u Snobl, L.}, author={Winternitz, P.},
	 title={A class of solvable Lie algebras and their Casimir invariants},
   journal={J.Phys. A: Math. Gen.},
   volume={38},
   number={12},
   date={2005},
   pages={2687 -- 2700},
}		

\bib{Tsa99}{article}{
	 author={Tsagas, G.},
	 title={Classification of nilpotent Lie algebras of dimension 8},
   journal={J. Inst. Math. Comput. Sci. Math. Ser.},
   volume={12},
   number={3},
   date={1999},
   pages={179 -- 183},
}	

\bib{TKK00}{article}{
	 author={Tsagas, G.}, author={Kobotis, A.}, author={Koukouvinos, T.},
	 title={Classification of nilpotent Lie algebras of dimension nine whose maximum Abelian ideal is of the dimension seven},
   journal={Int. J. Comput. Math.},
   volume={74},
   number={1},
   date={2000},
   pages={5 -- 28},
}	

\bib{Tur90}{article}{
	 author={Turkowski, P.},
	 title={Solvable Lie algebras of dimension 6},
   journal={J. Math. Phys.},
   volume={31},
   number={6},
   date={1990},
   pages={1344 -- 1350},
}

\bib{VH84}{article}{
	 author={Vuong, M. S.}, author={Ho, H. V.},
	 title={Sur la structure des $C^*$-algebres d’une classe de groupes de Lie},
   journal={J. Operator Theory},
   volume={11},
   number={},
   date={1984},
   pages={77 -- 90},
}

\end{biblist}
\end{bibdiv}

\end{document}